\documentclass[11pt,leqno]{amsart}

\usepackage{graphicx}
\usepackage{color}
\usepackage{geometry}
\usepackage{fullpage}
\usepackage{calrsfs}
\usepackage[OT2,T1]{fontenc}
\usepackage[english]{babel}
\usepackage{times}

\usepackage{etoolbox}
\patchcmd{\section}{\scshape}{\bfseries}{}{}
\makeatletter
\renewcommand{\@secnumfont}{\bfseries}
\makeatother

\newtheorem{introtheorem}{Theorem}

\theoremstyle{definition}

\newtheorem*{introexample*}{Example}
\newtheorem*{introexamples*}{Examples}

\newtheorem*{introremark*}{Remark}
\newtheorem*{introremarks*}{Remarks}
\theoremstyle{plain}
\newtheorem{theorem}{Theorem}[section]
\newtheorem{proposition}[theorem]{Proposition}
\newtheorem{lemma}[theorem]{Lemma}

\theoremstyle{definition}
\newtheorem{definition}[theorem]{Definition}
\newtheorem{notation}[theorem]{Notation}
\newtheorem{example}[theorem]{Example}

\newtheorem{remark}[theorem]{Remark}
\newtheorem{condition}[theorem]{Condition}

\NewCommandCopy{\proofqedsymbol}{\qedsymbol}% save the default

\AtBeginEnvironment{proof}{\renewcommand{\qedsymbol}{\proofqedsymbol}}

\AtBeginEnvironment{introexample}{%
  \pushQED{\qed}\renewcommand{\qedsymbol}{$\lozenge$}%
}
\AtEndEnvironment{introexample}{\popQED}

\AtBeginEnvironment{introexample*}{%
  \pushQED{\qed}\renewcommand{\qedsymbol}{$\lozenge$}%
}
\AtEndEnvironment{introexample*}{\popQED}

\AtBeginEnvironment{example}{%
  \pushQED{\qed}\renewcommand{\qedsymbol}{$\lozenge$}%
}

\AtBeginEnvironment{introexamples*}{%
  \pushQED{\qed}\renewcommand{\qedsymbol}{$\lozenge$}%
}
\AtEndEnvironment{introexamples*}{\popQED}

\AtBeginEnvironment{example}{%
  \pushQED{\qed}\renewcommand{\qedsymbol}{$\lozenge$}%
}

\AtEndEnvironment{example}{\popQED}

\AtBeginEnvironment{examples}{%
  \pushQED{\qed}\renewcommand{\qedsymbol}{$\lozenge$}%
}
\AtEndEnvironment{examples}{\popQED}

\AtBeginEnvironment{introremark}{%
  \pushQED{\qed}\renewcommand{\qedsymbol}{$\lozenge$}%
}
\AtEndEnvironment{introremark}{\popQED}

\AtBeginEnvironment{introremarks*}{%
  \pushQED{\qed}\renewcommand{\qedsymbol}{$\lozenge$}%
}
\AtEndEnvironment{introremarks*}{\popQED}

\AtBeginEnvironment{remark}{%
  \pushQED{\qed}\renewcommand{\qedsymbol}{$\lozenge$}%
}
\AtEndEnvironment{remark}{\popQED}

\AtBeginEnvironment{remarks}{%
  \pushQED{\qed}\renewcommand{\qedsymbol}{$\lozenge$}%
}
\AtEndEnvironment{remarks}{\popQED}

\AtBeginEnvironment{definition}{%
  \pushQED{\qed}\renewcommand{\qedsymbol}{$\triangle$}%
}
\AtEndEnvironment{definition}{\popQED}

\AtBeginEnvironment{notation}{%
  \pushQED{\qed}\renewcommand{\qedsymbol}{$\triangle$}%
}
\AtEndEnvironment{notation}{\popQED}

\usepackage{amsfonts}
\usepackage{amssymb}
\usepackage{hyperref}
\usepackage{mathtools}
\usepackage{relsize}
\usepackage{aligned-overset}

\newcommand{\Z}{\mathbf{Z}}
\newcommand{\Q}{\mathbf{Q}}
\newcommand{\R}{\mathbf{R}}

\newcommand{\F}{\mathbf{F}}
\DeclareMathOperator{\End}{\mathrm{End}}

\newcommand{\Ga}{\mathbf{G}_a}
\newcommand{\Gm}{\mathbf{G}_m}
\renewcommand{\geq}{\geqslant}
\renewcommand{\leq}{\leqslant}

\makeatletter
\newcommand*{\defeq}{\mathrel{\rlap{%
                     \raisebox{0.24ex}{$\m@th\cdot$}}%
                     \raisebox{-0.24ex}{$\m@th\cdot$}}%
                     =}
\makeatother

\begin{document}

\date{\today}
\title{Orbit decomposition statistics for discrete dynamical systems:\\[1mm] the Ces\`aro mean and a large deviation principle}
\author[G.~Cornelissen]{Gunther Cornelissen}
\address{\normalfont Mathematisch Instituut, Universiteit Utrecht, Postbus 80.010, 3508 TA Utrecht, Nederland}
\email{g.cornelissen@uu.nl}
\author[S.W.~Park]{Sun Woo Park}
\address{\normalfont Max-Planck-Institut f\"ur Mathematik, Vivatsgasse 7, 53111 Bonn, Deutschland}
\email{s.park@mpim-bonn.mpg.de}
\thanks{The first author thanks the Hausdorff Institute of Mathematics (HIM) for hospitality during the month when this work was started, and both authors thank the Max-Planck-Institut f\"ur Mathematik (MPIM) for excellent working conditions. Through HIM, this research is funded by the Deutsche Forschungsgemeinschaft (DFG, German Research Foundation) under Germany‘s Excellence Strategy – EXC-2047/1 – 390685813. The second author also thanks the Centre de Recherches Math\'ematiques (CRM) in Montr\'eal for support during the thematic programme `Universal Statistics in Number Theory'. SageMath was used for computing examples in order to scrutinise some of the assymptotic results. AI (Anthropic’s Claude Sonnet 4.6 Adaptive) was used in the conceptualisation phase of some of the proofs, that were then written manually by the authors. We thank Marc Houben, Berend Ringeling, Tom Ward and Wadim Zudilin for helpful comments.}

\subjclass[2010]{11N45, 11K65,14L10, 37P55, 37C35, 60F10} 
\keywords{\normalfont dynamics, algebraic groups, endomorphisms, orbit distribution, Mertens's theorems, large deviations}

\begin{abstract} \noindent If a self-map $\sigma \colon \mathcal{X} \rightarrow \mathcal{X}$ has a dynamical zeta function with nonzero radius of convergence $1/\Lambda$ and the Ces\`aro mean $B$ of $ \# \mathrm{Fix}(\sigma^k)/\Lambda^k$ exists and is positive, we show a large deviation principle for the number of prime orbits occurring in the decomposition of a general orbit of length $\leq X$ (an element of the free abelian monoid generated by the prime orbits or, equivalently, a prime orbit of a finite multiset in $\mathcal{X}$) with speed $B \log X$ and universal rate function equal to that of the Poisson distribution with unit mean. We also show a  large deviation principle for more general strongly additive functions. The proof uses asymptotic results on the total number of general orbits, as well as a weak analogue of Mertens's second theorem, that may be of independent interest. The theory applies, for example, to endomorphisms of algebraic groups over finite fields, additive cellular automata, and automorphisms of some solenoids. 
\end{abstract}

\maketitle

\section{Introduction} 
We consider a discrete dynamical system $(\mathcal{X},\sigma)$ given by a set $\mathcal{X}$ and iteration of a self-map $\sigma \colon \mathcal{X} \rightarrow \mathcal{X}$. Just like the positive integers under multiplication are the free multiplicative abelian monoid generated by the primes, prime  (i.e., finite closed) orbits of a point under iteration of $\sigma$ span a free additive monoid, whose elements, that we call \emph{general orbits}, correspond bijectively to orbits of finite multisets in $\mathcal{X}$; the cardinality of a prime orbit, its \emph{length}, extends additively to general orbits. Thus, it makes sense to study analogues for such dynamical systems of statistical properties of prime decomposition, such as the expected number of prime factors of an integer (Hardy--Ramanujan \cite{HardyRamanujan}, 1917), convergence to the normal distribution (Erd\H{o}s--Kac \cite{EK}, 1940), speed of convergence (R\'enyi--Tur\'an \cite{RenyiTuran}, 1958) and Large Deviation Principles (Mehrdad--Zhu \cite{MehrdadZhu}, 2016). 

Some such generalisations exist in the setting of normed abelian monoids with axiomatic assumptions on the corresponding prime counting function akin the prime number theorem (a research area known as `abstract analytic number theory' \cite{Knopfmacher} or the theory of `Beurling primes'  \cite{Beurling}), see, e.g., \cite{Liu1} for Erd\H{o}s--Kac and \cite{KP} for Large Deviations. This theory is not applicable to most of the discrete dynamical systems we will consider. 

Results like the Central Limit Theorem (of which the Erd\H{o}s--Kac theorem is a number-theoretical analogue) and Large Deviation Principles (LPD's for short, see, e.g.\ \cite{DemboZeitouni}) take place on different scales, and there is no reason to expect that their generalisations to other contexts should depend on the same assumptions or should be treated in historically chronological order. In this paper, we focus solely on deducing LPD's  for orbit decomposition in discrete dynamical systems, depending on minimalistic conditions: only the convergence properties of the (Artin--Mazur) dynamical zeta function and the existence of a Ces\`aro mean for the rescaled fixed point count need to be assumed. From this, an asymptotic count of general orbits, as well as a dynamical analogue of Mertens's second theorem from prime number theory can be deduced (other examples of such analogues can be found in \cite{BCav}, \cite{SharpMertens}). These results are then sufficient to apply the G\"artner--Ellis theorem and get a LDP. This typically answers questions of the form `what is the probability that a general orbit has a given fraction more than the expected number of components in its decomposition, depending on its length?'. The analogue of the Erd\H{o}s--Kac theorem seems to require stronger hypotheses, related to the error terms in the asymptotics, and we hope to return to this in a subsequent paper. Finally, note that for dynamical systems, one may also consider the statistics of orbit length itself instead of orbit decomposition, see, e.g., \cite{Kifer}, \cite{Sharp}; this is different from what we do here (our method only handles \emph{strongly} additive functions $g$, for which $g(P^k)=g(P)$ for prime orbits $P$).  

\subsection*{Notation} To describe our results in a precise way, we first fix some notation. Let $\sigma_k$ denote the number of fixed points of the $k$-th iterate $\sigma^k \colon \mathcal{X} \rightarrow \mathcal{X}$ of a self-map $\sigma \colon \mathcal{X} \rightarrow \mathcal{X}$ on a set $\mathcal{X}$. Assume that $\sigma$ is \emph{confined}, i.e., $\sigma_k$ is finite for all $k$. The \emph{growth rate} of $(\mathcal{X},\sigma)$ is \[ \Lambda \defeq \exp \limsup \log(\sigma_k)/k.\] 
A prime (or primitive, or finite) orbit for $\sigma$ of length $\ell$ is a set $O = O(x) = \{x, \sigma(x),\sigma^2(x),\dots,\sigma^{\ell}(x)=x\}$ of exact cardinality $\ell=\ell(O)$ for some $x \in \mathcal{X}$. A \emph{general orbit} is an element of the free additive abelian monoid generated by the prime orbits, with the length function extended additively. Equivalently, a general orbit is a finite orbit of a finite multiset in $\mathcal{X}$: if $S$ is a finite multiset in $\mathcal{X}$ consisting of elements $x_i$ with multiplicity $m_i$ then the corresponding general orbit is the sum $\sum m_i O(x_i)$ in the monoid generated by prime orbits. 
 
Let $\mathcal P$ denote the set of prime orbits, and $\mathcal N$ the set of general orbits, and for $X \geq 1$, let $\mathcal P(X), \mathcal N(X)$ denote the subset of all elements of length less than or equal to $X$ in the respectively sets. Let $N_n$ denote the total number of general orbits of length $n$, and $P_\ell$ the total number of prime orbits of length $\ell$. 
Define the corresponding cumulative counting functions as $$N_\sigma(X) \defeq \# \mathcal N(X), \ P_\sigma(X) \defeq \# \mathcal P(X) \mbox{ and }M_\sigma(X) \defeq \sum_{P \in \mathcal P(X)} \Lambda^{-\ell(P)} = \sum_{\ell \leq X} P_\ell/\Lambda^\ell.$$ 
This last function $M_\sigma(X)$ is the analogue of Mertens's counting function $\sum_{p \leq X} 1/p$ ($p$ prime), where we count every prime orbit $O$ with `norm' $\Lambda^{\ell(O)}$. 

\subsection*{Asymptotic counting} 
The crucial quantity is a certain Ces\`aro mean. Recall that the Ces\`aro mean $\mathcal C(a_k)$ of a sequence $\{a_k\}_{k \geq 1}$ of real numbers is defined as $$\mathcal C(a_k) \defeq \lim_{X \rightarrow + \infty} \frac{1}{X} \sum_{k \leq X} a_k.$$

\begin{introtheorem} \label{introthma} 
Suppose $\mathcal{X}$ is a set and $\sigma \colon \mathcal{X} \rightarrow \mathcal{X}$ a confined self-map. Assume that $\Lambda$ is finite and satisfies $\Lambda>1$, and assume that the Ces\`aro mean 
$B \defeq \mathcal C (\sigma_k/\Lambda^k)$ 
 exists and is strictly positive. Then the following asymptotics hold: 
\begin{equation} \tag{N} \label{N} N_\sigma(X) = \frac{C}{\Gamma(B)} \cdot \Lambda^X X^{B-1}\Big(1 + O\Big(\frac{1}{\log X}\Big)\Big) \end{equation} for some $C>0$, and 
\begin{equation} \tag{M} \label{M} M_\sigma(X) \sim B \log X+ o(\log X). \end{equation} 
\end{introtheorem}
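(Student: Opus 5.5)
The plan is to work with the Artin--Mazur zeta function
\[\zeta_\sigma(z)=\exp\Big(\sum_{k\ge1}\frac{\sigma_k}{k}z^k\Big)=\prod_{P\in\mathcal P}\big(1-z^{\ell(P)}\big)^{-1}=\sum_{n\ge0}N_n z^n,\]
where the Euler product expresses that $\mathcal N$ is the free monoid on $\mathcal P$. I would rescale by $u=\Lambda z$ and write $a_k\defeq\sigma_k/\Lambda^k=B+e_k$ with $E(X)\defeq\sum_{k\le X}e_k=o(X)$ (the Ces\`aro hypothesis). This gives
\[\log\zeta_\sigma(u/\Lambda)=B\log\frac{1}{1-u}+\sum_{k\ge1}\frac{e_k}{k}u^k.\]
Both (M) and (N) then reduce to controlling the error series $\sum e_k u^k/k$, respectively its partial sums $\sum_{k\le X}e_k/k$.

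I would prove (M) first, since it is elementary. M\"obius inversion of $\sigma_k=\sum_{d\mid k}dP_d$ gives
\[\frac{P_\ell}{\Lambda^\ell}=\frac{1}{\ell}\sum_{d\mid\ell}\mu(\ell/d)\frac{\sigma_d}{\Lambda^\ell}.\]
The term $d=\ell$ equals $a_\ell/\ell$. For every other divisor $d\le\ell/2$ we have $\sigma_d\le C_\varepsilon\Lambda^{(1+\varepsilon)d}$, so $\sigma_d/\Lambda^\ell\le C_\varepsilon\Lambda^{-\ell(1-\varepsilon)/2}$. Summing over the at most $\ell$ divisors and then over $\ell$ gives a convergent series, since $\Lambda>1$. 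Hence
\[M_\sigma(X)=\sum_{\ell\le X}\frac{a_\ell}{\ell}+O(1).\]
Abel summation turns this into
\[B\sum_{\ell\le X}\frac1\ell+\frac{E(X)}{X}+\int_1^X\frac{E(t)}{t^2}\,dt=B\log X+o(\log X),\]
because $E(t)=o(t)$. This gives (M).

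For (N), the same partial summation applied with $u\uparrow1$ shows that $\sum e_ku^k/k=o\big(\log\frac1{1-u}\big)$. Hence $\zeta_\sigma(u/\Lambda)=(1-u)^{-B}L\big(\frac1{1-u}\big)$ with $L$ slowly varying. The coefficients $N_n\ge0$, so Karamata's Tauberian theorem applied to $\sum_n N_n\Lambda^{-n}u^n$ (after multiplying by $(1-u)^{-1}$ to pass to cumulative sums weighted by $\Lambda^{-n}$) gives $\sum_{n\le X}N_n\Lambda^{-n}\sim X^{B}L(X)/\Gamma(B+1)$. Undoing the weights by partial summation, using that $\Lambda^n$ grows geometrically so the top terms dominate, converts this into $N_\sigma(X)\asymp\Lambda^X X^{B-1}L(X)/\Gamma(B)$ up to the factor $\Lambda/(\Lambda-1)$, which I would absorb into $C$.

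The main obstacle is upgrading the slowly varying $L$ to a constant $C$ with relative error $O(1/\log X)$. For this I need the error series $\sum e_k u^k/k$ to converge to a finite limit at $u=1$ at a controlled rate, and the bare Ces\`aro hypothesis $E(X)=o(X)$ does not obviously give that. What I would try first is a Selberg--Delange type argument. Write $\zeta_\sigma(u/\Lambda)=(1-u)^{-B}G(u)$, show that $G$ extends continuously to $u=1$ with $G(1)=C>0$ and $G(u)-G(1)=O(|1-u|\log\frac1{|1-u|})$ in a Stolz-type region, and recover $N_n$ by a Hankel-contour Cauchy integral. The $1/\log X$ loss comes from the logarithmic factor in the modulus of continuity. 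If the Ces\`aro hypothesis alone is too weak for this, I would identify precisely which extra regularity of $E(X)$ (for instance $\int^\infty|E(t)|t^{-2}\,dt<\infty$, which holds in the examples of the paper, where $\sigma_k$ is a quasi-polynomial in $k$ times $\Lambda^k$) is being used, and make it explicit.
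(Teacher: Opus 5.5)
\textbf{(M).} Your proof is correct and is essentially the paper's: M\"obius inversion, keep the term $d=\ell$, bound the remaining divisors by $O(1)$, then Abel summation using $E(t)=o(t)$. Bounding the lower divisors via the growth rate is cleaner than the paper's version.

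\textbf{(N), first gap (which you flagged).} Your skeleton is the paper's: Ces\`aro $\Rightarrow$ logarithmic summability, then a Tauberian theorem for $A(X)=\sum_{n\le X}N_n\Lambda^{-n}$, then partial summation. Your Karamata step is sound. $E(X)=o(X)$ does make $L$ slowly varying, and Karamata applies to $\sum N_n\Lambda^{-n}u^n$ directly, with no extra factor $(1-u)^{-1}$. Your worry about the constant is justified, and the paper does not resolve it. It asserts that \emph{by exponentiation} $\zeta(u/\Lambda)\sim c'(1-u)^{-B}+O(1)$ and then invokes Freud's remainder theorem. But exponentiating $o(\log\frac{1}{1-u})$ gives only your slowly varying factor.

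The bare Ces\`aro hypothesis is genuinely insufficient. Let $\mathcal X$ be a disjoint union of $P_\ell=\lfloor \frac{2^\ell}{\ell}(1+\frac{1}{\log(\ell+2)})\rfloor$ cycles of length $\ell$, for each $\ell$. Then $\sigma_k/2^k=1+\frac{1}{\log(k+2)}+O(2^{-k/2})$, so $\Lambda=2$ and $B=1$. However, $\log\zeta(u/2)=\log\frac{1}{1-u}+\log\log\frac{1}{1-u}+O(1)$, so $(1-u)\zeta(u/2)\to\infty$. Since $\sum_X N(X)2^{-X}u^X=\zeta(u/2)/(1-u/2)$, the Abelian theorem shows this is incompatible with $N(X)\sim C\,2^X$, which is what (N) asserts for $B=1$. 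So an extra quantitative hypothesis on $E(X)$ is unavoidable, as you suspected.

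\textbf{(N), second gap (unflagged, and shared by the paper).} The step \emph{undoing the weights by partial summation} fails. In $N(X)=\Lambda^X A(X)-(\Lambda-1)\sum_{n<X}\Lambda^n A(n)$, both terms have size $\Lambda^X X^B L(X)$, while their difference has size $\Lambda^X X^{B-1}L(X)$. Hence any $o(1)$, or even $O(1/\log X)$, relative error in $A$ swamps the main term. That the top terms dominate is precisely the problem: $N(X)$ depends on the individual values $N_n\Lambda^{-n}$ for $n$ near $X$, and asymptotics of their partial sums do not determine these.

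For an example, let $\sigma$ act on two copies of the full $2$-shift by shifting and swapping the copies. Then $\sigma_k=2^{k+1}$ for even $k$ and $\sigma_k=0$ for odd $k$, so $\Lambda=2$ and $B=1$. Moreover $\zeta(u/2)=(1-u^2)^{-1}=\frac{1}{2}(1-u)^{-1}+O(1)$, so even the paper's intermediate claim and the conclusion of Freud's theorem hold. Yet $N_n=2^n$ for even $n$ and $N_n=0$ for odd $n$, so $N(X)/2^X$ alternates between about $4/3$ and $2/3$, and (N) fails.

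A correct version therefore also needs an aperiodicity hypothesis controlling $\zeta(u/\Lambda)$ at the other points of $|u|=1$. Your Hankel-contour fallback cannot supply this. The contour must leave the unit disc near $u=1$, which is impossible when $|u|=1$ is a natural boundary. And information in a Stolz region at $u=1$ never sees the singularity at $u=-1$ in this example. Separately, a modulus of continuity $O(|1-u|\log\frac{1}{|1-u|})$ would yield a power saving, not $1/\log X$; the $1/\log X$ in (N) is Freud's generic Tauberian remainder.
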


To write the leading constant in \eqref{N} as $C/\Gamma(B)$ is natural from the proof and useful in the formulation of further results below. The proof of Theorem \ref{introthma} hinges on the relations between $\sigma_k, N_n$ and $P_\ell$ expressed via the Artin--Mazur dynamical zeta function $\zeta_\sigma(z) = \exp \sum \sigma_k z^k/k$ of $\sigma$, which is at the same time the exponential generating function for $\sigma_k$, the Taylor series for $N_n$ and the Euler product for $P_\ell$. The generating series has radius of convergence $1/\Lambda$ and in general has a natural boundary along $|z|=1/\Lambda$, but we can apply a logarithmic version of the Hardy--Littlewood real Tauberian theorem, taking the ---weak, but generally optimal--- error term from work of Freud \cite{Freud}. The rest of the proof involves standard applications of partial summation. The total count in \eqref{N} can have $B-1$ positive or negative (as seen in the Examples listed below), meaning that there may be polynomial excess or deficiency in the total count compared to dominant growth $\Lambda^X$.  For the quite different case where $\Lambda=1$, see Propositions \ref{Lambda1} and \ref{Lambda1a}. The case $\Lambda<1$ is not interesting, see Remark \ref{Lambdaless}. 

\subsection*{An LDP} As a consequence of Theorem \ref{introthma}, we prove a large deviation principle for the number of prime orbits occurring in the decomposition of a general orbit. 
\begin{introtheorem} \label{introthm:LDP}
With the same notations and conditions from Theorem \ref{introthma}, let $W$ be the random variable over $\Z_{>0}$ defined as the number of distinct prime orbits of a uniformly chosen element from the $\mathcal N(X)$ for each $X \in \Z_{>0}$. Then for any Borel measurable set $A \subset \R$ the probability $\mathbb{P} \left[ \frac{W(X)}{B \log X} \in A \right]$ satisfies a large deviation principle with speed $B \log X$ and rate function
	\begin{equation} \label{Ix}
		I(x) = \begin{cases}
			x \log x - x + 1 &\text{ if } x \geq 0 \\
			+\infty &\text{ otherwise}.
		\end{cases}
	\end{equation}
	In particular, for any Borel measurable set $A \subset \R$, we have
	\begin{equation}
	- \mathrm{inf}_{x \in A^\circ} I(x) \leq \liminf_{X \to \infty} \frac{\log \mathbb{P} \left[ \frac{W(X)}{B \log X} \in A \right]}{B \log X} \leq \limsup_{X \to \infty} \frac{\log \mathbb{P}\left[ \frac{W(X)}{B \log X} \in A \right]}{B \log X} \leq -\mathrm{inf}_{x \in \overline{A}} I(x).
	\end{equation}
\end{introtheorem}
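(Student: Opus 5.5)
We will apply the G\"artner--Ellis theorem with speed $a_X \defeq B\log X$. It suffices to show that for every real $t$ the normalised cumulant generating function
\[
\phi(t) \defeq \lim_{X\to\infty} \frac{1}{B\log X}\log \mathbb{E}\big[e^{tW(X)}\big]
\]
exists and equals $e^t-1$. This limit is finite and differentiable on all of $\R$. Its Legendre transform $\sup_t (tx - e^t + 1)$ equals $x\log x - x + 1$ for $x\ge 0$ (with the value $1$ at $x=0$) and $+\infty$ for $x<0$, which is exactly \eqref{Ix}. The Poisson rate function therefore comes out automatically, and the whole problem reduces to the asymptotics of $\mathbb{E}[e^{tW(X)}] = N_\sigma(X)^{-1}\sum_{G\in\mathcal N(X)} e^{t\omega(G)}$, where $\omega(G)$ is the number of distinct prime orbits in $G$.

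To compute the numerator, introduce the weighted generating series
\[
F_t(z) \defeq \sum_{G\in\mathcal N} e^{t\omega(G)} z^{\ell(G)} = \prod_{P\in\mathcal P}\Big(1 + e^t\frac{z^{\ell(P)}}{1-z^{\ell(P)}}\Big).
\]
Write $y = e^t$. Compare $F_t$ with $\zeta_\sigma(z)^{y}$, whose Euler product is $\prod_P (1-z^{\ell(P)})^{-y}$. The ratio $H_t(z) = F_t(z)\zeta_\sigma(z)^{-y}$ has local factors $1 + O_y(z^{2\ell(P)})$. Since $P_\ell \le \sigma_\ell \ll (\Lambda+\varepsilon)^\ell$, the product $\sum_P |z|^{2\ell(P)}$ converges for $|z| < \Lambda^{-1/2}$. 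Hence $H_t$ is holomorphic and nonzero on a disc strictly larger than $|z|\le 1/\Lambda$, with $H_t(1/\Lambda) > 0$.

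Next we need the growth of the coefficients of $\zeta_\sigma(z)^y$. As in the proof of Theorem \ref{introthma}, $\log\zeta_\sigma(z) = \sum \sigma_k z^k/k$, and the Ces\`aro hypothesis $B = \mathcal C(\sigma_k/\Lambda^k)$ together with partial summation gives $\log\zeta_\sigma(r/\Lambda) \sim B\log\frac{1}{1-r}$ as $r\to 1^-$. Therefore $\log F_t(r/\Lambda) \sim yB\log\frac{1}{1-r}$. The coefficients of $F_t$ are nonnegative, so the logarithmic Hardy--Littlewood/Freud Tauberian theorem used for \eqref{N} applies to $F_t$ with $B$ replaced by $yB$. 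At the level of logarithms this yields
\[
\log \sum_{G\in\mathcal N(X)} e^{t\omega(G)} = X\log\Lambda + (yB - 1 + o(1))\log X.
\]
Only this crude logarithmic form is needed. Combining it with \eqref{N}, $\log N_\sigma(X) = X\log\Lambda + (B-1)\log X + O(1)$, gives
\[
\log\mathbb{E}\big[e^{tW(X)}\big] = (e^t - 1)B\log X + o(\log X),
\]
which is the required limit $\phi(t) = e^t - 1$.

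The main obstacle is the Tauberian step for general $t$. When $y = e^t < 1$ the exponent $yB$ may be small, and the Tauberian theorem only controls cumulative sums through the behaviour on the real axis. The factor $H_t$, however, behaves harmlessly near $z=1/\Lambda$, and the precise constant in front does not matter since we only need $o(\log X)$ accuracy; this should make it enough. If the Tauberian route for $F_t$ proves delicate, the fallback is a direct elementary bound. The upper bound would come from Rankin's trick, $\sum_{\ell(G)\le X} e^{t\omega(G)} \le (r/\Lambda)^{-X} F_t(r/\Lambda)$ with $1-r = 1/X$, combined with \eqref{M}. The lower bound would come from restricting to general orbits built from primes of controlled length and using \eqref{M} to count them.
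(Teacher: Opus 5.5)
Your reduction to $\phi(t)=e^t-1$, the Legendre transform, the Euler product for $F_t$ and the factorisation $F_t=\zeta_\sigma^{\,e^t}H_t$ are all fine. The gap is the sentence ``At the level of logarithms this yields $\log\sum_{G\in\mathcal N(X)}e^{t\omega(G)}=X\log\Lambda+(yB-1+o(1))\log X$''.

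Write $f_n\ge 0$ for the coefficient of $z^n$ in $F_t$. Logarithmic information such as $\log F_t(r/\Lambda)\sim yB\log\frac{1}{1-r}$ yields, via a Tauberian argument, only $\sum_{n\le X}f_n\Lambda^{-n}=X^{yB+o(1)}$. But
\[
\Lambda^{-X}\sum_{\ell(G)\le X}e^{t\omega(G)}=\sum_{j\ge 0}\Lambda^{-j}\cdot f_{X-j}\Lambda^{-(X-j)}
\]
is a geometrically weighted average of the normalised coefficients over a window of bounded length just below $X$. The drop of the exponent from $yB$ to $yB-1$ is therefore a short-interval regularity statement, which cumulative information at logarithmic precision cannot detect. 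For example, a nonnegative sequence supported on powers of $2$ with $a_{2^k}=2^{k\beta}$ has $\sum_{n\le X}a_n=X^{\beta+o(1)}$, yet $\sum_{j\ge0}a_{X-j}\Lambda^{-j}$ is exponentially small at $X=3\cdot 2^{k-1}$.

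The target $(e^t-1)B\log X$ is itself of order $\log X$, so an uncontrolled factor $X$ is fatal. Your fallback loses exactly this factor: Rankin's bound with $1-r=1/X$ only gives $\sum_{\ell(G)\le X}e^{t\omega(G)}\le\Lambda^X X^{yB+o(1)}$, hence $\phi(t)\le e^t-1+1/B$. Your argument is also asymmetric. For the denominator you use the exponent $B-1$ from \eqref{N}. The paper obtains that exponent not from logarithmic information but from the Tauberian asymptotic with remainder, followed by an Abel summation in which two terms of size $\Lambda^X X^{B}$ cancel down to $\Lambda^X X^{B-1}$. That cancellation is invisible at the level of logarithms, so ``only this crude logarithmic form is needed'' is false for the numerator.

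To make your route work you need a genuine analogue of \eqref{N} for every $y=e^t>0$. First, rerun the argument for \eqref{N} with $\sigma_k$ replaced by $y\sigma_k$, to get $\sum_{n\le X}g_n=c_y\Lambda^X X^{yB-1}(1+o(1))$ for the coefficients $g_n$ of $\zeta_\sigma^{\,y}$. For $y\in\Z_{>0}$ this is just Theorem \ref{introthma} applied to the disjoint union of $y$ copies of $(\mathcal X,\sigma)$. Then transfer it to $F_t=\zeta_\sigma^{\,y}H_t$ by convolving with the coefficients of $H_t$, which are $O((\Lambda^{1/2}+\varepsilon)^m)$, using $H_t(1/\Lambda)>0$.

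The paper never touches twisted counts. By Lemma \ref{LPD:lemma1}, $\mathbb E[\prod Z_{P_i}]=N(X-\sum\ell(P_i))/N(X)$, so only the unweighted count enters. The ratio form of \eqref{N} in Lemma \ref{LDP:lemma2} then lets it compare the $Z_P$ with independent Bernoulli variables $Y_P$ of parameter $\Lambda^{-\ell(P)}$ on primes with $\tilde k_X\le\ell(P)\le k_X$ (Lemma \ref{LDP:lemma4}). The remaining primes are negligible by \eqref{M} (Lemmas \ref{LDP:lemma3-0} and \ref{LDP:lemma3}), and Theorem \ref{thm:GE} is applied with $\rho$ the point mass at $1$.

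A minor point: $H_t$ need not be zero-free on a disc containing $|z|\le 1/\Lambda$. Its local factor $(1+(y-1)w)(1-w)^{y-1}$ vanishes at $w=-1/(y-1)$, which $w=z^{\ell}$ reaches with $|z|\le1/\Lambda$ once $y\ge 1+\Lambda^{\ell}$ and $P_\ell>0$. You only use positivity of $H_t$ on $[0,1/\Lambda]$, and that does hold.
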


The expected value of $W(X)$ equals $B \log X+o(\log X)$, which is a very weak analogue of the Hardy--Ramanujan theorem. About the higher moments, we can say nothing. Note that the speed is dependent on the Ces\`aro mean but the rate function $I$ in \eqref{Ix} is universal; it is the rate function of the standard Poisson distribution with unit mean; we do not know whether there is a deeper reason for why exactly the Poisson distribution shows up here. We prove a more technical Large Deviation Theorem for suitable strongly additive functions $g$ on the monoid of general orbits, see Theorem \ref{thm:LDP}. In this case, we need an assumption of convergence of the Laplace transform of counting measures related to $g$, and the resulting rate function depends on  $g$. The structure of the proof is very close to that of a similar result in \cite{KP}, which in turn follows the template from \cite{MehrdadZhu}, and is ultimately based on using the G\"artner--Ellis theorem.  A crucial difference between existing proofs and our arguments is found in the proof of Lemma \ref{LDP:lemma4}. As an application, we establish large deviation principles for a family of strongly additive functions whose value is zero over a positive density subset of $\mathcal{P}$, see Example \ref{example:LDP3}. We also construct functions for which the assumption of convergence of their related Laplace transform does not hold (see Example \ref{example:LDP1}), as well as functions whose associated rate function does not give a meaningful LDP (see Examples \ref{example:LDP3} and \ref{example:LDP4}).

\begin{introexample*} Since closed intervals are $I$-continuity sets, a more intuitive consequence of Theorem \ref{introthm:LDP} is the following statement about the rare event that an orbit has a fraction $\epsilon$ more than the expected number of prime orbit components: \[ \mbox{for any $\varepsilon>0$},  \mathbb{P} \left[ W(X) \geq (1+\varepsilon) B \log X \right] \sim 1/X^{B ((1+\varepsilon)\log(1+\varepsilon) -\varepsilon)+o(1)} \mbox{ as }X \rightarrow \infty.\] 
For small $\epsilon$ and large $X$, this is roughly $X^{-B\epsilon^2/2}$. 
\end{introexample*}

\subsection*{Applicability: algebraic group endomorphisms and beyond} We consider the applicability of Theorem \ref{introthma} and \ref{introthm:LDP} to concrete dynamical systems. 

\begin{introtheorem} \label{FADisgood} 
Suppose $(\mathcal{X},\sigma)$ is a FAD-system \textup{(}in the sense of Definition \ref{deffad} below\textup{)} with $\Lambda>1$. Then $\Lambda$ is finite and the Ces\`aro mean $B$ as in Theorem \ref{introthma} exists and satisfies $B>0$.  
\end{introtheorem}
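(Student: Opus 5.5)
The plan is to work directly from the explicit shape of the fixed point counts that Definition \ref{deffad} imposes on a FAD-system. I expect this to be a factorisation
\[ \sigma_k = \Bigl| \sum_{i=1}^{r} c_i \lambda_i^k \Bigr| \cdot \prod_{p \in S} |k|_p^{s_p}, \]
with finitely many nonzero algebraic numbers $c_i$, finitely many distinct nonzero $\lambda_i$, a finite set of primes $S$, and exponents $s_p \geq 0$. If the definition instead uses a variant of the $p$-adic factor (for instance $|k|_p$ replaced by a function of $v_p(k)$), the argument below should adapt.

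\emph{Finiteness of $\Lambda$.} The $p$-adic factor lies in $(0,1]$, so $\sigma_k \leq \sum |c_i|\,(\max_i |\lambda_i|)^k$. Hence $\Lambda \leq \max_i|\lambda_i| < \infty$.

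\emph{Identifying the dominant part.} Let $\Lambda' = \max\{|\lambda_i| : c_i \neq 0\}$ and let $D$ be the set of dominant indices, i.e.\ those with $|\lambda_i|=\Lambda'$. Put $\lambda_i = \Lambda' e^{2\pi i\theta_i}$ for $i \in D$. Then
\[ \sigma_k/\Lambda'^k = \bigl|f(k\theta) + o(1)\bigr| \cdot u_k, \qquad f(t) = \sum_{i \in D} c_i e^{2\pi i t_i}, \quad u_k = \prod_{p\in S}|k|_p^{s_p}. \]
To see that $\Lambda=\Lambda'$, evaluate along $k \equiv 1 \bmod \prod_{p \in S} p$, where $u_k=1$. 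The vectors $(k\theta_i)_{i\in D}$ then run through a finite union of cosets of a subtorus $T$. This set is the orbit closure, and the restriction to a congruence class only replaces $\theta$ by a multiple. The function $f$ is a nonzero trigonometric polynomial, because the exponentials $e^{2\pi i k \theta_i}$ are linearly independent as functions of $k$ (distinct $\lambda_i$). So $f$ is not identically zero on this set, and $|f|$ is bounded below on a subset of positive Haar measure. By equidistribution (Weyl), $|f(k\theta)| \geq \delta > 0$ for a positive density of such $k$. Therefore $\limsup \log\sigma_k/k = \log\Lambda'$, and the hypothesis $\Lambda>1$ enters only through Theorem \ref{introthma}.

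\emph{Existence of the Ces\`aro mean.} The error $o(1)$ disappears in the Ces\`aro mean, so it suffices to average $|f(k\theta)|\,u_k$. First truncate the $p$-adic factor by setting $u_k^{(N)} = \prod_{p}\max(|k|_p, p^{-N})^{s_p}$. This depends only on $k \bmod M_N = \prod_{p\in S} p^N$, and $|u_k - u_k^{(N)}| \leq \sum_p p^{-Ns_p}$ uniformly in $k$ whenever $s_p>0$ (if $s_p=0$ the factor is identically $1$). For fixed $N$, split $k$ into residue classes modulo $M_N$. On each class, $(k\theta)$ equidistributes on a finite union of subtorus cosets, and $|f|$ is continuous. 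Joint Weyl equidistribution of $(k\theta, k \bmod M_N)$ on the closed subgroup of $\mathbf{T}^D\times \Z/M_N$ generated by $(\theta,1)$ then shows that the Ces\`aro mean of $|f(k\theta)|\,u^{(N)}_k$ exists; it equals a Haar integral. Letting $N\to\infty$, the limits form a Cauchy sequence by the uniform bound. So $B$ exists, and it is the integral of $|f|\cdot u$ against the Haar measure of the group closure in $\mathbf{T}^D \times \prod_{p\in S}\Z_p$.

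\emph{Positivity.} Restrict to the positive-density set of $k$ coprime to all $p \in S$, where $u_k=1$. There, $|f(k\theta)|\geq\delta$ on a subset of positive density by the argument above, so $B>0$.

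I expect the main obstacle to be the joint equidistribution step. The closure of $\{(k\theta, k)\}$ in $\mathbf{T}^D\times\prod_{p \in S}\Z_p$ may be a proper closed subgroup, because rational relations among the $\theta_i$ can interact with the congruence conditions. One also has to check that $f$ does not vanish identically on the relevant coset. I would handle this by passing to the connected component $T$ and treating finitely many congruence classes separately, as in the argument above.
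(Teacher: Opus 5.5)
The gap is in the step you use for both $\Lambda=\Lambda'$ and $B>0$. From the linear independence of $k\mapsto e^{2\pi i k\theta_i}$ on all of $\Z$ you infer that $f$ does not vanish identically on the coset swept out by $k\equiv 1 \bmod Q$, where $Q=\prod_{p\in S}p$. That inference is invalid. Along $k=1+Qm$, the functions $m\mapsto e^{2\pi i(1+Qm)\theta_i}$ become linearly dependent as soon as two dominant roots differ by a factor that is a $Q$-th root of unity. Then $f$ can vanish on the entire coset. Your own model gives a counterexample: $\sigma_k=|2^k+(-2)^k|\cdot|k|_2$ has $f(k\theta)=1+(-1)^k$. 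This is $0$ for every odd $k$, i.e.\ exactly where $u_k=1$, yet $\Lambda=2$ and $B=\tfrac12\,\mathcal C(|m|_2)=\tfrac13>0$. Your proposed remedy (connected components, finitely many congruence classes) does not help. The trouble is that the whole coset reached by the chosen class can lie in the zero set of $f$.

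The repair is short: do not insist on $u_k=1$.
\begin{itemize}
\item Since $f(k\theta)$ is an exponential sum in $k$ with distinct frequencies, $\mathcal C(|f(k\theta)|^2)=\sum_{i\in D}|c_i|^2>0$. As $f$ is bounded, this gives $|f(k\theta)|\geq\delta$ on a set of lower density $d_0>0$.
\item Meanwhile $u_k\geq\prod_{p}p^{-Ns_p}$ off $\{k:\exists p\in S,\ v_p(k)\geq N\}$. That set has density at most $\sum_{p\in S}p^{-N}<d_0/2$ for large $N$.
\item Intersecting the two sets yields both $\Lambda=\Lambda'$ and $B>0$.
\end{itemize}

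This fix also covers the actual Definition \ref{deffad}, which is not your guessed shape. It has periodic $r_k$ and $s_{p,k}$, and an extra factor $p^{-t_{p,k}|k|_p^{-1}}$ that is exponentially small at $k=p^j$. These factors are bounded below off the same small set and are uniform limits of periodic sequences, so your truncation argument adapts. In the true shape your coset idea can also be saved. The dominant factor is $\prod_{|\lambda|=1}|\lambda^k-1|$, as in \eqref{start0}, and the eigenvalue $1$ is excluded by $\Lambda>1$. Along $k\equiv1$ modulo $Q$ times the orders of the root-of-unity eigenvalues, each factor vanishes at most on a Haar-null part of the coset.

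Once this is fixed, your existence argument is sound: uniform periodic approximation plus equidistribution of $(k\theta,k\bmod M_N)$ in a monothetic compact group. It differs from the paper's route. The paper quotes \eqref{start0} and expands $\prod_j4\sin^2(k\theta_j/2)$ into finitely many exponentials. It then stratifies $k$ by its $p$-adic valuations, so that $\tilde b_k$ is periodic on each stratum, and computes each Ces\`aro mean exactly by discrete Fourier analysis. Your route allows an arbitrary dominant configuration but yields $B$ only as a Haar integral. The paper's explicit formulas \eqref{BasL} and \eqref{LTheta} are what feed Theorem \ref{dich}. Note also that the paper never spells out $B>0$, so you were right to address it.
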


To show the existence of the Ces\`aro mean for such systems, we use a Fourier analytic method. 
The somewhat exotic looking concept of FAD-system in fact embraces many natural examples: endomorphisms $\sigma$ of algebraic groups $G$ (abelian varieties, linear algebraic groups, \dots) over $\overline \F_p$, with $\mathcal{X}=G(\overline \F_p)$ \cite{BCH}; additive cellular automata $\sigma$ with $\mathcal{X}=(\F_p^r)^{\Z}$ (these turn out to be equivalent to vector group endomorphisms, see \cite{BCca}); as well as solenoids given as $S$-integer dynamical systems for global fields in the sense of Chothi--Everest--Ward \cite{Chothi} (see also \cite{LindWard}). Below are some concrete examples, each illustrating a different aspect. 

\begin{introexamples*} \mbox{ } 
\begin{enumerate}
\item[(FF)] $G=\Ga$ is the additive group over $\overline \F_q$ and $\sigma (x)=x^q$ is the $q$-Frobenius; then $\sigma_k = q^k$. We have $B=C=1$, and a primitive orbit of $\sigma^k$ corresponds to (a Galois orbit of a root of) an irreducible polynomial of degree $k$, so this case coincides with the well-known decomposition of a polynomial over a finite field $\F_q$ into irreducibles, already studied by Gau{\ss} in an unpublished chapter of the Disquisitiones, see \cite{Frei}.  
\item[(E)] $G=E$ is an ordinary elliptic curve over $\F_3$, and $\sigma(P)=2P$; then $\sigma_k = (2^k-1)^2\cdot |2^k-1|_3$. We have $B=5/2^3$ a rational number and $C/\Gamma(B) \approx 0.502128$ with $$C=2^{19/8}/3^{125/64} \prod_{j \geq 0} \left( \frac{4^{3^j}+1}{4^{3^j}-1}\right)^{1/3^{2j+1}}.$$
\item[(GA)] $G=\Ga$ is the additive group over $\F_2$ and $\sigma (x)= x^2+x$; then $\sigma_k = 2^{k-|k|_2^{-1}}$. We have $B = \sum_{j \geq 0} 2^{-1-j-2^j} \approx 0.320556$ (a transcendental number) and, with  $A \defeq \sum_{j \geq 0} j 2^{-1-j-2^j} \approx 0.078859$, then $C=2^{B-A+1}$ and $C/\Gamma(B) \approx 0.847382$. 
\item[(GM)] $G=\Gm^4$ is the multiplicative group over $\F_5$ and $\sigma(x_1,x_2,x_3,x_4)=(x_4^{-1}, x_1 x_4^3, x_2 x_4^{-3}, x_3 x_4^3)$; then $\sigma_k = |\det(A^k-1)| \cdot |\det(A^k-1)|_5$, where $A$ is the companion matrix of $x^4-3x^3+3x^2-3x+1$, the minimal polynomial of a Salem number. In this case, $B=2  \cdot 23^2 / (11 \cdot 71) \approx  1.354673.$
\end{enumerate}
Examples (FF) and (GA) are equivalent (in the sense of \cite{BCca}) to a cellular automaton, namely, (FF) is equivalent to the endomorphism of the shift space $\mathcal{X}=\F_q^{\Z}$ given by the shift itself $\sigma((y_i)_{i \in \Z}) = (y_{i+1})_{i \in \Z}$, and (GA) is equivalent to the endomorphism of the shift space $\mathcal{X}=\F_2^{\Z}$ given by $\sigma((y_i)_{i \in \Z}) = (y_i+y_{i+1})_{i \in \Z}$. 
\end{introexamples*}

For an algebraic group, the nature of the Ces\`aro limit $B$ depends on the underlying geometry. We will in fact compute $B$ for all integer multiplications on ordinary elliptic curves in characteristic $p\geq 3$ (Proposition \ref{BforE}), and for all additive cellular automata (Formula \eqref{BforCA}). We also prove a general result, using the following concept. In \cite[Prop.~8.1.2]{BCH} it is shown that for any algebraic group $G$ over $\overline \F_p$, there exists a descending series of fully characteristic subgroups (meaning subgroups $N$ for which $\sigma(N) \subseteq N$ for all $\sigma \in \End(G)$) in which all factor groups are either finite, a torus, a semisimple group, an abelian variety or a vector group. Call any such series an \emph{endoseries for $G$}. 

\begin{introtheorem} \label{dich} Suppose that $\sigma$ is a confined endomorphism of an algebraic group $G$ over $\overline \F_p$. Then 
\begin{enumerate}
\item the associated Ces\`aro mean $B$ is algebraic (and then, belongs to $\Q^\mathrm{ab}$) if the zeta function of the induced action on all vector group quotients occurring in an endoseries for $G$ is algebraic; 
\item the Ces\`aro mean $B$ is rational if, in addition, the \emph{fluctuation angles} for $\sigma$, in the sense of Definition \ref{fluctang}, are linearly independent over $\Q$;
\item the Ces\`aro mean $B$ for an endomorphism of a vector group with non-rational zeta function is transcendental. 
\end{enumerate} 
\end{introtheorem}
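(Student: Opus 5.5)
We reduce to the structure theory of \cite{BCH}: for a confined endomorphism $\sigma$ of $G$ over $\overline\F_p$, the fixed point count along an endoseries factors as a product $\sigma_k = \prod_i \sigma_k^{(i)}$ over the factor groups, by fully-characteristic-ness and confinedness. Each factor is finite, a torus, semisimple, an abelian variety, or a vector group. On the non-vector factors, $\sigma_k^{(i)}$ is of the form $|\det(A_i^k-1)|\cdot\prod_{\ell}|\det(A_i^k-1)|_{p}$-type expressions, i.e.\ a linear recurrence times a $p$-adic correction. On a vector group, it is $p^{\text{rank}}$ raised to a $k$-dependent exponent.

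Writing $\sigma_k/\Lambda^k$, the dominant recurrence part becomes, after dividing by $\Lambda^k$, a finite sum $\sum_j c_j e^{2\pi i k\theta_j}+o(1)$. Here the $\theta_j$ are the arguments of eigenvalues of maximal modulus; these are the fluctuation angles. The $p$-adic factors are functions of $k$ determined by $v_p(k)$ and by $k$ modulo some integer $m$; this is exactly what the Fourier analytic argument behind Theorem \ref{FADisgood} exploits. I would compute $B$ as an average over the compact group $\mathbb T^r\times \Z_p\times\Z/m$: by Weyl equidistribution of $(k\theta_1,\dots,k\theta_r,k)$ in the closure of the orbit, $B$ equals the Haar integral of the limiting function. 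This uses the explicit formula underlying Theorem \ref{FADisgood}.

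For (1), the torus part of the integral is a trigonometric polynomial integrated over the subtorus $H$, the closure of $\{k\theta\}$. It equals the sum of coefficients $c_\chi$ over characters trivial on $H$. These $c_\chi$ are polynomial in algebraic eigenvalues and their conjugates, hence algebraic. Roots of unity arising from rational relations among the $\theta_j$ give abelian extensions. The $\Z_p$-integral of a function of $v_p(k)$ gives a series $\sum_n p^{-n}(1-p^{-1})f(n)$. For non-vector factors, $f(n)$ is geometric in $p^{n}$ (lifting-the-exponent), so the series is rational. For vector factors, algebraic zeta function forces the exponent data to be eventually periodic/geometric as in (FF), so the series is again algebraic. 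Altogether $B\in\Q^{\mathrm{ab}}$.

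For (2), linear independence makes $H$ the full torus, so only the constant term survives. That term is a rational combination of the product structure, since the constant coefficient of $\prod|1-\lambda^k|$-type expansions is an integer. For (3), the non-rational zeta case is exactly where the exponent involves $|k|_p^{-1}$, as in (GA). Then $B$ is a lacunary series $\sum_j a_j p^{-j-p^j}$ with rational $a_j$ bounded in denominator. Transcendence follows from a Liouville-type argument, since the gaps $p^{j+1}-p^j$ grow superexponentially relative to the denominators.

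The main obstacle is (1)/(3): controlling the interaction between the vector-group $v_p(k)$-dependence and the recurrence part in one Haar integral. I would first try to show the relevant function factorises over $\mathbb T^r$ and $\Z_p$, which would make the integral a product.
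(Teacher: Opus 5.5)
Your part (iii) has a genuine gap: a Liouville-type argument cannot give transcendence here, because the series is not Liouville-like. Take the case where all $t_a$ equal a single $t>0$. Then $B$ is a rational multiple of $\sum_{j\ge0}p^{-j-tp^j}$. Truncating after $j=J$ gives a rational $a_J/q_J$ with $q_J=p^{J+tp^J}$, while the tail is $\asymp p^{-J-tp^{J+1}}$. The gap $t(p-1)p^J$ is only a \emph{constant} multiple of $\log_p q_J$, not superexponentially larger. Hence $|B-a_J/q_J|\approx q_J^{-\mu_J}$ with $\mu_J\to p$, and Liouville's inequality only excludes algebraic numbers of degree $<p$. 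For (GA) ($p=2$, $t=1$) one even has $\mu_J<2$, so you get irrationality and nothing more; this is the Kempner situation, where Roth's theorem does not apply either. You need a stronger input. One option is Mahler's method, as the paper does it: $F(x)=\sum_a\sum_j x^{t_a p^j}/p^j$ satisfies $\frac1p F(x^p)=F(x)-\sum_a x^{t_a}$, and $F$ is not rational (hence, by Nishioka, not algebraic) unless all $t_a=0$. Nishioka's criterion then gives transcendence of $F(1/p)$, hence of $B$. The other option is Ridout's theorem: the denominators are, up to a fixed factor, powers of $p$, so any approximation exponent $>1$ suffices. With several distinct $t_a>0$, truncate at the large multiplicative gaps among the exponents $t_a p^j+j$.

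In (i)--(ii) your equidistribution/Haar framework is sound; it repackages the paper's Fourier regrouping. However, two steps are wrong as stated.

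First, the fluctuation angles of Definition \ref{fluctang} are $\pi$ and the arguments $\theta_j$ of the eigenvalues of $A$ \emph{on the unit circle}, not those of maximal modulus. An eigenvalue with $|\lambda|\neq1$ contributes $|\lambda^k-1|=\max(1,|\lambda|)^k(1+o(1))$, with no oscillation. So the oscillating factor is $\prod_j 4\sin^2(k\theta_j/2)=\sum_\epsilon c_\epsilon e^{ik\Theta_\epsilon}$, with $c_\epsilon\in\Z$ and $\Theta_\epsilon=\sum_j\epsilon_j\theta_j$. This integrality, together with $r_a,p^{s_a}\in\Q$ by realisability, is what puts $B$ in $\Q^{\mathrm{ab}}$; coefficients that are merely ``polynomials in algebraic eigenvalues'' would not.

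Second, you cannot integrate the torus factor and the $\Z_p\times\Z/m$ factor separately and multiply. The function $T(x)\tilde b(y)$ already factorises. What fails to be a product is the orbit closure $H$ of $k\mapsto(k\theta,k)$, as soon as some $\Theta_\epsilon\in 2\pi\Q\setminus\{0\}$. For example, unit-circle eigenvalues $e^{i\theta}$ and $-e^{i\theta}$ produce a factor $(-1)^k$, which correlates with an even period of $r_k$, or with $v_2(k)$ when $p=2$. So ``showing the function factorises'' does not remove the obstacle you identified.

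The fix is to pair characters on $H$: $\int_H e^{i\,\epsilon\cdot x}\tilde b(y)\,d\mu_H=\mathcal C(\tilde b_k e^{ik\Theta_\epsilon})=L(\Theta_\epsilon)$. This vanishes unless $\Theta_\epsilon\in\pi\Q$. Otherwise it is a single Fourier coefficient of $\tilde b_k=r_k|k|_p^{s_k}$ at a finite-order character, i.e.\ a finite $\Q^{\mathrm{ab}}$-combination of geometric series with rational ratios. This is exactly the paper's route, \eqref{BasL} with \eqref{LThetaalg}. Under independence of $\{\pi\}\cup\{\theta_j\}$ only $\Theta_\epsilon=0$ survives, and $B=2^mL(0)\in\Q$.

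Finally, the hypothesis of (i) enters only by forcing $t_k\equiv0$ on the vector quotients (cf.\ Proposition \ref{gatransc}). These quotients therefore contribute exactly $p^{ck}$, with no $v_p(k)$-dependence at all.
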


The algebraic and rational case are shown by direct calculation, and the complementary transcendence statement is shown using the method of Mahler functions. The theorem does not deal with groups whose endoseries has several occurrences of vector groups with non-rational zeta functions. 

\section{Asymptotic results for counting functions} \label{s1}

The goal of this section is to prove Theorem \ref{introthma}, namely, that existence and positivity of the Ces\`aro mean imply {\normalfont \eqref{N}} and {\normalfont \eqref{M}}. 

We fix $(\mathcal{X},\sigma)$ and leave out the subscript $\sigma$ from the quantities involved. We also set $N_0=1$. 
Consider the Artin--Mazur dynamical zeta function of the system $(\mathcal{X},\sigma)$ given by 
\begin{equation} \label{AM} \zeta(z) \defeq \exp ( \sum_{k \geq 1} \sigma_k z^k/k) = \sum_{n \geq 0} N_n z^n = \prod_{\ell \geq 1} (1-z^\ell)^{-P_\ell},
\end{equation}
see \cite{AM} or \cite{Smale}. By the Cauchy--Hadamard criterion and the definition of $\Lambda$, the function $\zeta(z)$ is holomorphic in the disk $|z|<1/\Lambda$, but in general may have a natural boundary on $|z|=1/\Lambda$, i.e, cannot analytically be extended to any open set intersecting the boundary; see, e.g., \cite[\S 11.3]{BCH} for examples. 
The last equality in \eqref{AM} is a formal consequence of the factorisation of general orbits into prime orbits. The equality between first and last term follows since every one of the $\ell$ points in a prime orbit of length $\ell$ is a fixed point of $\sigma^k$ precisely if $\ell$ divides $k$, i.e., we have the first equation below (and the second one follows by M\"obius inversion of the first):
 \begin{equation} \label{swapsp} \sigma_k = \sum_{\ell \mid k} \ell P_\ell \mbox{ and } P_\ell = \frac{1}{\ell} \sum_{n \mid \ell} \mu(l/n) \sigma_n. \end{equation}

\begin{proof}[Proof of Theorem \ref{introthma}] The first step in proving \eqref{N} is the observation that the existence of the Ces\`aro limit $B$ implies \emph{logarithmic summability}, in the following more general sense.
\begin{lemma} \label{logas} If $\{b_k\}_{k \geq 1}$ is a sequence of real numbers whose Ces\`aro mean $B=\mathcal C(b_k)$ exists, and $f(u)=\sum b_k u^k/k$ converges for $0 \leq u<1$, then $\displaystyle{\lim_{u \rightarrow 1^-} \frac{-1}{\log(1-u)} f(u) = B.} $ \end{lemma}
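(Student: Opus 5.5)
Let $S_n = \sum_{k \leq n} b_k$, with $S_0=0$. By hypothesis $S_n = Bn + o(n)$. The plan is to rewrite $f(u)$ by Abel (partial) summation in terms of $S_n$, and then compare with the model case $b_k \equiv B$, for which $f(u) = -B\log(1-u)$ exactly. For $0 \le u < 1$ we have
\[
f(u) = \sum_{k\ge1} \frac{b_k u^k}{k} = \sum_{n\ge1} S_n\Big(\frac{u^n}{n} - \frac{u^{n+1}}{n+1}\Big).
\]
This identity is justified because the boundary term $S_N u^{N+1}/(N+1)$ tends to $0$: indeed $S_N = O(N)$ and $u<1$. Define $c_n(u) \defeq u^n/n - u^{n+1}/(n+1)$. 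These weights are nonnegative for $0\le u<1$, since $x\mapsto u^x/x$ is decreasing. They also satisfy
\[
\sum_{n\geq 1} n\,c_n(u) = \sum_{k\ge1} \frac{u^k}{k} = -\log(1-u),
\]
which is the same partial summation applied to $b_k\equiv 1$. Hence
\[
f(u) - B\bigl(-\log(1-u)\bigr) = \sum_n (S_n - Bn)\,c_n(u).
\]

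Fix $\varepsilon>0$ and choose $n_0$ such that $|S_n - Bn| \le \varepsilon n$ for all $n\ge n_0$. Split the last sum at $n_0$.

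\emph{Tail ($n \ge n_0$).} Using $c_n \ge 0$, this part is bounded in absolute value by $\varepsilon \sum_n n c_n(u) = \varepsilon(-\log(1-u))$.

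\emph{Head ($n < n_0$).} This is a finite sum of terms $(S_n - Bn)c_n(u)$. Each $c_n(u)$ is at most $1/n$, so the head is bounded by a constant $K(n_0)$ independent of $u$.

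Dividing by $-\log(1-u)$, which tends to $+\infty$ as $u\to1^-$, gives
\[
\limsup_{u\to1^-}\Big|\frac{f(u)}{-\log(1-u)} - B\Big| \le \varepsilon .
\]
Since $\varepsilon>0$ was arbitrary, the limit equals $B$.

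The only point needing care is the convergence and rearrangement in the Abel summation: absolute convergence of $\sum b_k u^k/k$ and of $\sum S_n c_n(u)$ both follow from $|b_k| \le |S_k|+|S_{k-1}| = O(k)$ together with $u<1$. This is routine, so there is no serious obstacle. Note that the hypothesis that $f$ converges on $[0,1)$ is in fact automatic from the existence of the Ces\`aro mean, since that already forces $b_k = O(k)$.
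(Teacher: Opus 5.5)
Your proof is correct and follows essentially the same route as the paper: Abel summation to write $f(u)=\sum_n S_n c_n(u)$ with nonnegative weights satisfying $\sum_n n\,c_n(u)=-\log(1-u)$, then splitting the sum using $|S_n-Bn|\le\varepsilon n$ for large $n$. The paper phrases the comparison with the constant sequence as a preliminary reduction to $B=0$ (replacing $b_k$ by $b_k-B$), which is the same as your direct subtraction of $-B\log(1-u)$.
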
 
\begin{proof}[Proof of Lemma \ref{logas}] 
Set $S(X) \defeq \sum_{k \leq X} b_k$. The hypothesis says that  \begin{equation} \label{defsx} S(X) = B X + \epsilon_X X \mbox{ with } \epsilon_X \rightarrow 0. \end{equation} 
 First, notice that by replacing $b_k$ with $b_k-B$, it suffices to prove the implication for $B=0$ (doing this changes $f$ to $f-\sum Bu^k/k = f-B\log(1-u)$). Since $S(X) u^X/X \rightarrow 0$, Abel summation applied to $f$ gives 
 $$ f(u) = \sum_{k \geq 1} S(k) \left( \frac{u^k}{k}-\frac{u^{k+1}}{k+1} \right) = \sum_{k \geq 1} S(k)u^k \underbrace{\frac{k+1-ku}{k(k+1)}}_{\geq 0}$$
 Given $\varepsilon>0$, choose $K$ sufficiently large with $|S(k)| \leq \epsilon k$ for all $k>K$ (which is possible since we now assume that $B=0$). Then 
$$ |f(u)| \leq \text{cst} + \epsilon \sum_{k \geq K} u^k \frac{k+1-ku}{(k+1)} \leq \text{cst} + \epsilon \sum_{k \geq 1} u^k \frac{k+1-ku}{(k+1)} = \text{cst} - \epsilon \log(1-u); $$
and thus \[\limsup_{u \rightarrow 1^-} \frac{-|f(u)|}{\log(1-u)} \leq \epsilon\] and the result follows by letting $\epsilon \rightarrow 0$.
\end{proof} 
 Since we assume $\Lambda$ to be finite and $B$ to exist, we can apply the lemma with $b_k = \sigma_k/\Lambda^k$, with the function $f(u) \defeq \log \zeta(u/\Lambda) = \sum \sigma_k/\Lambda^k u^k/k$ converges in the open disk $|u|<1$. 
By exponentiation, the logarithmic summability implies that, for $u \rightarrow 1^-$, 
\begin{equation} \label{c1} \zeta(u/\Lambda) = \sum N_n / \Lambda^n u^n \sim c' \cdot (1-u)^{-B} + O(1), \end{equation} for some $c'>0$. Now by the positivity $B>0$, the Hardy--Littlewood `real' Tauberian theorem with remainder term, due to G.~Freud \cite{Freud} (compare \cite{HL}, \cite[I.7.4, VI.\S 1]{Korevaar}), implies that 
\begin{equation} \label{preabel}  A(X) \defeq \sum_{n\leq X} N_n / \Lambda^n \sim c'' \cdot X^B + O\Big(\frac{X^B}{\log X} \Big),  \end{equation} 
for the constant $c'' = c'/\Gamma(B+1)$. We can convert the asymptotics in \eqref{preabel} into one for $N(X)$ by applying again Abel summation, which gives for the main term
$$ N(X) = A(X) \Lambda^X - \sum_{n=1}^{X-1} A(n) (\Lambda^{n+1}-\Lambda^n) \sim c'' X^B \Lambda^X - c''(\Lambda-1) \sum_{n=1}^{X-1} n^B \Lambda^n, $$
and with 
\begin{align*} \sum_{n=1}^{X-1} n^B \Lambda^n &\sim  \Lambda^X X^B \Bigg( \sum_{k=1}^{\lfloor \sqrt{X} \rfloor} \Lambda^{-k} \underbrace{\Big( 1 - \frac{k}{X} \Big)^B}_{=1-\frac{Bk}{X}+O(\frac{1}{X^2})} + \underbrace{\sum_{k=\lfloor \sqrt{X} \rfloor + 1}^{X-1}\Lambda^{-k} \Big( 1 - \frac{k}{X} \Big)^B \Bigg)}_{=O(\Lambda^{-\sqrt{X}})}  \\ 
& \sim \frac{\Lambda^X X^B}{\Lambda-1} \left( 1-\frac{B\Lambda}{X(\Lambda-1)} + O\Big(\frac{1}{X^2}\Big) \right)
\end{align*} 
we find 
 $N(X) \sim \sum_{n\leq X} N_n \sim C/\Gamma(B) \Lambda^X \cdot X^{B-1}$
where now 
\begin{equation} \label{c2} C=c' \Lambda/(\Lambda-1). \end{equation}
The same method works for the error term, noticing that in the corresponding terms of the form $$\sum_{k=1}^{X-1}\frac{1}{\log(X-k)} \cdot \Lambda^{-k}\Big( 1 - \frac{k}{X} \Big)^B,$$ we can replace ${1}/{\log(X-k)}$ by $1/\log(X)$ as $X \rightarrow + \infty$. 
Combining everything, we find \eqref{N}. 

\medskip

To prove \eqref{M}, we insert \eqref{swapsp} into the definition of $M(X)$ and split off a main term, to get
$$ M(X) = \sum_{\ell \leq X} \frac{P_\ell}{\Lambda^\ell} = \sum_{j \leq X} \frac{\sigma_j}{j \Lambda^j} + \underbrace{\sum_{d \geq 2} \frac{\mu(d)}{d} \sum_{j \leq X/d}  \frac{\sigma_j}{\Lambda^j} \frac{1}{j \Lambda^{j(d-1)}}}_{=R(X)}. $$ 
The second  term $R(X)$ is $O(1)$, as can be seen by using \eqref{defsx} and Abel summation giving $|R(X)|  = O(\sum_{d \geq 2} 1/(d\Lambda^{d-1})).$ On the other hand, for the first term, Abel summation gives
\begin{equation} \label{abj} \sum_{j \leq X} \frac{\sigma_j}{j \Lambda^j} = S(X)/X + \sum_{k=1}^{X-1} \frac{S(k)}{k(k+1)}. \end{equation} 
Now \eqref{defsx}  allows us to rewrite \eqref{abj} as $$ B+ \epsilon_X + B \sum_{k=1}^{X-1} \frac{1}{k+1} + \sum_{k=1}^{X-1} \frac{\epsilon_k}{k+1}  = B \log X + o(\log X), $$ implying the result. 
\end{proof} 

We add the corresponding quite different result for $\Lambda=1$ (i.e., the case of subexponential growth rate of the number of fixed points).

\begin{proposition} \label{Lambda1}
Suppose $\mathcal{X}$ is a set and $\sigma \colon \mathcal{X} \rightarrow \mathcal{X}$ a confined self-map for which $\limsup \log(\sigma_k)/k=0$, i.e., the growth rate $\Lambda$ equals $1$, and assume that the Ces\`aro mean $B \defeq \mathcal C (\sigma_k)$ 
 exists and is strictly positive. Then 
\begin{equation} \label{N1} \tag{N1} N_\sigma(X) =  \frac{C}{\Gamma(B+1)} \cdot X^B\Big(1 + O\Big(\frac{1}{\log X}\Big)\Big), \end{equation} 
for some $C>0$. Furthermore, the dynamical zeta function $\zeta_\sigma(z)$ either has a natural boundary along the unit circle $|z|=1$, or it is a rational function. In the latter case, $\sigma_k$ is periodic with period $\varpi$, $B$ is the actual average $B=\varpi^{-1} \sum_{k=0}^{\varpi-1} \sigma_k \in \Z_{>0}$, and $P_\ell=0$ unless $\ell \mid \varpi$. In particular, the set of prime orbits is finite of cardinality $B$, and
\begin{equation} \label{M1} \tag{M1} M_\sigma(X) = B \end{equation} 
for $X>\varpi$. Furthermore, in this case $C/\Gamma(B+1)=1/B! \cdot \prod_{\ell \mid \varpi} \ell^{-P_\ell}$  in \eqref{N1}. 
\end{proposition}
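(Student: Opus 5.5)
For \eqref{N1} I will follow the proof of \eqref{N} in Theorem \ref{introthma} with $\Lambda=1$. Lemma \ref{logas} applies with $b_k=\sigma_k$, since $f(u)=\log\zeta(u)=\sum\sigma_k u^k/k$ converges for $0\leq u<1$ (because $\Lambda=1$). It gives $\log\zeta(u)\sim -B\log(1-u)$ as $u\to1^-$. Exponentiating as in \eqref{c1}, I get $\zeta(u)=\sum N_n u^n\sim c'(1-u)^{-B}$. The coefficients $N_n$ are nonnegative and $B>0$, so Freud's Tauberian theorem gives $N(X)=\sum_{n\leq X}N_n = \frac{c'}{\Gamma(B+1)}X^B(1+O(1/\log X))$. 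Here no second Abel summation is needed, since with $\Lambda=1$ the weights $\Lambda^{-n}$ are trivial. This gives \eqref{N1} with $C=c'$. The main caveat is the same as in Theorem \ref{introthma}: one must pass from the asymptotic of the logarithm to an honest asymptotic $c'(1-u)^{-B}$ with a constant $c'$. I would justify this exactly as the paper does at \eqref{c1}, with the same level of rigour.

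For the dichotomy I would use the Pólya--Carlson theorem. The series $\zeta(z)=\sum N_n z^n$ has integer coefficients and radius of convergence $1$: indeed $N_n\geq$ some $P_\ell$-driven count and $N(X)$ grows polynomially, so the radius is exactly $1$ because $B>0$ forces infinitely many nonzero $N_n$. By Pólya--Carlson, $\zeta$ is therefore either rational or has the unit circle as natural boundary.

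In the rational case, I write $\zeta=p/q$. Then $\zeta'/\zeta=\sum\sigma_k z^{k-1}$ is rational with only simple poles, and the residues are integers. Since $\sigma_k$ grows subexponentially and is integer-valued, the poles lie on $|z|=1$. Simple poles give $\sigma_k=\sum_i m_i\alpha_i^k$ with $|\alpha_i|=1$ and $m_i\in\Z$. Because $\zeta$ has integer coefficients with constant term $1$, Kronecker's theorem shows the $\alpha_i$ are roots of unity. More directly, the Euler product $\prod(1-z^\ell)^{-P_\ell}$ with $P_\ell\geq0$ must be a finite product of cyclotomic factors. It follows that $\sigma_k$ is periodic with some period $\varpi$, and the Cesàro mean $B$ is the average over one period. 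Comparing pole orders at $z=1$, this average equals the total multiplicity $\sum_\ell P_\ell$, which lies in $\Z_{>0}$.

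Next I use \eqref{swapsp}: $\sigma_k=\sum_{\ell\mid k}\ell P_\ell$ is periodic with period $\varpi$. Taking $k$ coprime to $\ell$ and growing shows $P_\ell=0$ unless $\ell\mid\varpi$. More cleanly, $\sigma_{k}=\sigma_{k+\varpi}$ for $k=\ell$ forces $\ell\mid\varpi$ whenever $P_\ell>0$, by minimality arguments on the divisor sums. Hence $\mathcal P$ is finite, and $\sum_{\ell\mid\varpi}\ell P_\ell/\varpi$ averaged against the indicator of $\ell\mid k$ gives $B=\sum_\ell P_\ell=\#\mathcal P$. This is also $M(X)$ for $X>\varpi$, since $\Lambda=1$, which proves \eqref{M1}.

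Finally, $\zeta(u)=\prod_{\ell\mid\varpi}(1-u^\ell)^{-P_\ell}$, and near $u=1$ each factor behaves like $(\ell(1-u))^{-1}$. So $c'=\prod\ell^{-P_\ell}$, and $C/\Gamma(B+1)=\prod\ell^{-P_\ell}/B!$, which is the claimed constant.

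The step I expect to be the main obstacle is justifying rigorously that periodicity of $\sigma_k$ forces $P_\ell=0$ for $\ell\nmid\varpi$. I would do this through the rational factorisation and the uniqueness of cyclotomic factorisation, rather than through combinatorics on the divisor sums.
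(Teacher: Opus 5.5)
Your outline is the paper's: Freud's Tauberian theorem for \eqref{N1}, P\'olya--Carlson for the dichotomy, and the same averaging computation and constant. The one difference is that you get periodicity of $\sigma_k$ from Kronecker's theorem rather than from the paper's Lambert-series residue argument.

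\textbf{The gap.} It is where you expected, but one step earlier than the divisibility $\ell\mid\varpi$: you never prove that only finitely many $P_\ell$ are nonzero. You use this twice:
\begin{enumerate}
\item ``comparing pole orders at $z=1$'' identifies $B$ with $\sum_\ell P_\ell$ only if $\prod_\ell(1-z^\ell)^{-P_\ell}$ is a finite product;
\item ``uniqueness of cyclotomic factorisation'' compares finite products.
\end{enumerate}
The claim that the Euler product ``must be a finite product of cyclotomic factors'' is asserted, not proved. The divisor-sum sketches are not arguments either. With $k=\ell$, $\sigma_{\ell+\varpi}$ loses the term $\ell P_\ell$ but may gain contributions from other divisors of $\ell+\varpi$, so the single identity $\sigma_\ell=\sigma_{\ell+\varpi}$ forces nothing.

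A smaller point: subexponential growth only rules out poles of $\zeta'/\zeta$ in $|z|<1$. To apply Kronecker you need Fatou's lemma on rational series, $\zeta=p/q$ with $p,q\in\Z[z]$ and $p(0)=q(0)=1$. This makes the reciprocal roots algebraic integers, closed under conjugation, of absolute value at most $1$.

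\textbf{The repair.} Once $\sigma_k$ is periodic it is bounded, and $\ell P_\ell\leq\sigma_\ell$ by \eqref{swapsp}, so $P_\ell=0$ for $\ell>\max_k\sigma_k$. Then $\sigma_k=\sum_{\ell\mid k}\ell P_\ell$ attains its maximum $\sum_\ell\ell P_\ell$ exactly when $L\defeq\mathrm{lcm}\{\ell : P_\ell>0\}$ divides $k$. Since $\sigma_{L+\varpi}=\sigma_L$, this gives $L\mid\varpi$, hence $\ell\mid\varpi$ whenever $P_\ell>0$.

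You can in fact drop P\'olya--Carlson and Kronecker altogether. By \eqref{swapsp},
\[ \frac{1}{X}\sum_{k\leq X}\sigma_k=\sum_{\ell\leq X}P_\ell\cdot\frac{\ell\lfloor X/\ell\rfloor}{X}, \]
where the weights lie in $(0,1]$ and tend to $1$ for fixed $\ell$. Since $P_\ell\geq 0$, this forces $\sum_\ell P_\ell=B$: truncate at a fixed $\ell$ for the lower bound, and the upper bound is trivial. So under the hypotheses the Euler product is always finite and $\zeta_\sigma$ is rational; the natural-boundary branch is empty.

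This also removes the caveat you flagged for \eqref{N1}. Lemma \ref{logas} alone gives only $\log\zeta(u)\sim-B\log(1-u)$, which does not produce a constant $c'$. For the finite product, $\zeta(u)\sim c'(1-u)^{-B}$ holds with $c'=\prod_\ell\ell^{-P_\ell}$.
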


For the zeta function to have a \emph{natural boundary along $|z|=1$} means that it cannot be analytically continued to any open set intersecting the unit circle (see, e.g.\ \cite[Ch.~6]{Segal}). 

\begin{proof} 
For $\Lambda=1$, the arguments in the proof of Theorem \ref{introthma} up to Formula \eqref{preabel} remain valid, and imply \eqref{N1} with $C = c'/\Gamma(B+1)$, where $c'$ is determined by $\zeta(u) \sim c' \cdot (1-u)^{-B}$ as $u \rightarrow 1^-$. 

The dynamical zeta function $\zeta(z)$ has a Taylor series expansion with integer coefficients converging inside the unit disk, so by the P\'olya--Carlson theorem \cite[Ch.~6]{Segal}, either $\zeta(z)$ has a natural boundary on the unit circle, or it is a rational function. In the first case, there is little we can say, so assume we are in the second case. Then also $Z(z)=z \zeta'(z)/\zeta(z) = \sum_{k \geq 1} \sigma_k z^k$ is a rational function. Since $\sigma_k = \sum_{\ell \mid k} \ell P_\ell$, the generating series $Z(z)$ is also a Lambert series
$$ Z(z) \defeq \sum_{k \geq 1} \sigma_k z^k = \sum_{\ell \geq 1} \ell P_\ell \frac{z^\ell}{1-z^\ell}. $$
At any primitive $N$-th root of unity $\zeta_N$, the Lambert series has residue $-\zeta_N \sum_{\ell \geq 1} P_{\ell N}$. Since all $P_\ell$ are non-negative, as soon as $P_N \neq 0$, $Z(z)$ has a pole at $\zeta_N$. It follows that only finitely many $P_\ell$ are non-zero, and all poles of $Z(z)$ are at roots of unity; thus, we can write  $Z(z)=F(z)/(1-z^\varpi)$ for some integer $\varpi$ and a polynomial $F$. By expanding, it follows that $\{\sigma_k\}$ is periodic of period $\varpi$, and thus that $B$ is the actual average of $\{\sigma_k\}_{k=0}^{\varpi-1}$. The previous consideration of the Lambert series then also implies that $P_\ell \neq 0$ only if $\ell \mid \varpi$. 

We see that for $X\geq\varpi$, $M(X)$ is constant, equal to 
$ \sum_{\ell \mid \varpi} P_\ell.$ We claim that this is also equal to $B$, the average of $\sigma_k$, and we prove this by the simple computation 
\[ B = \frac{1}{\varpi} \sum_{k=1}^{\varpi} \sigma_k =  \frac{1}{\varpi} \sum_{k=1}^{\varpi} \sum_{\substack{\ell \mid k \\ \ell \mid \varpi}} \ell P_\ell
= \sum_{\ell \mid \varpi} P_\ell \frac{l}{\varpi} \sum_{\substack{1 \leq k \leq \varpi \\ \ell \mid k}} 1  = \sum_{\ell \mid \varpi} P_\ell \in \Z_{>0}.
\]
Now that we have established that the set of non-zero $P_\ell$ is finite, we can be more precise about \eqref{N1} in the case of a rational zeta function, namely,  
$$\zeta(z) = \prod\limits_{\ell \mid \varpi} (1-z^\ell)^{-P_\ell} \underset{z \rightarrow 1^-}{\sim} (1-z)^{-B} \prod\limits_{\ell \mid \varpi} \ell^{-P_\ell},$$
which gives the indicated value of $C/\Gamma(B+1)$.
\end{proof} 

\begin{remark} \label{Lambdaless} 
If $\Lambda<1$, we claim that $\sigma_k=0$ and $P_\ell=0$ for all $k$ and $\ell$. Indeed, $\sigma_k=0$ for all $k>N$ for some $N$ since $\limsup \log(\sigma_k)/k=0$ and $\sigma_k$ is a non-negative integer. But, as fix point sequence of a dynamical systems, $\sigma_k$ is also a Dold sequence (see, e.g., \cite{Dold}), meaning that the corresponding $P_\ell$ are integers for all $\ell$.  However, choosing $\ell=mp$ with fixed $m$ for which $P_m \neq 0$ and variable prime $p>N$, we find that $mp$ should divide $\sum_{k \mid \ell} \mu(\ell/k) \sigma_k = - P_m + \sum_{k \mid m} \mu(m/k) \sigma_{pk} = -P_m$, which is impossible for large enough $p$. 
\end{remark}

\section{Large deviation principles}

In this section, we show how Theorem \ref{introthma} implies Theorem \ref{introthm:LDP}. More generally, we will show large deviation principles for a wider class of strongly additive functions. We consider the following condition analogous to \cite[Cond.~1.3]{KP}.
To simplify notations in this section, we fix $\sigma$ and leave it out of the notation.

\begin{condition} \label{condition:LDP}
Let $g$ be a \emph{strongly additive} function $g: \mathcal N \to \R$, i.e. $g$ satisfies
\begin{align*}
g(P^k) &= g(P) \text{ for all prime orbits } P, \\
g(O_1 O_2) &= g(O_1) + g(O_2) \text{ for all coprime orbits } O_1, O_2,
\end{align*}
where two orbits $O_1$ and $O_2$ are \emph{coprime} if they have no common prime orbits occurring in their decompositions. 

Next, given any Borel measurable set $A \subset \R$, let $\rho_X$ be the probability measure given by
\begin{equation} \label{convmeas}
\rho_X(A)
= \frac{1}{M(X)} \sum\limits_{\substack{P \in \mathcal{P}(X)\\ g(P) \in A}} \hspace*{-0mm} {\Lambda^{-\ell(P)}}; 
\end{equation}
then we further suppose that there is a convergence of the Laplace transforms of the measures $\rho_X$ as $X \rightarrow +\infty$ of the form 
$$\int_{\R} e^{\theta y} \rho_X(dy) \to \int_{\R} e^{\theta y} \rho(dy)<\infty$$ for any $\theta \in \R$ and some probability measure $\rho$ on $\R$. 
\end{condition}

\begin{example} \label{example:LDP1} For a general orbit $O$ and a prime orbit $P$, denote by $P \mid O$ the property that $P$ occurs in the decomposition of $O$ into prime orbits.  Given a weight function $w \colon \mathcal P \rightarrow \Z_{>0}$, the weighted decomposition statistics $g(O) = \sum_{P \mid O} w(P)$ satisfies Conditions \ref{condition:LDP} if and only if the stated convergence holds for the measure in \eqref{convmeas} with the summation condition $g(P) \in A$ replaced by $w(P) \in A$. Choosing $w(P)=1$ gives the function used in Theorem \ref{introthm:LDP}. 
\end{example}

\begin{example}
We give an example of weights where Conditions \ref{condition:LDP} may not hold. For example, when $w(P) = \ell(P)$, then the Laplace transform of the measures $\rho_X$ is given by
\[ \int_{\R} e^{\theta y} \rho_X(dy) = \frac{1}{M(X)} \sum_{\substack{P \in \mathcal{P}(X)}} \frac{e^{\theta \ell(P)}}{\Lambda^{\ell(P)}}, \]
which, for instance, does not have a limit if $\theta > \log \Lambda$ as $X \to + \infty$. Other interesting examples where analogues of Conditions \ref{condition:LDP} may not hold for abelian monoids can be found in \cite[Rem.~4]{MehrdadZhu}.
\end{example}

\begin{theorem} \label{thm:LDP}
	Assume the notations and conditions from Theorem \ref{introthma}. Let $g$ be a strongly additive function satisfying Condition \ref{condition:LDP}.  Let $W$ be a random variable over $\Z_{>0}$ defined as $W(O) \defeq g(V(O))$, where $V(O)$ is a uniformly chosen element from the set $\mathcal N(X)$ for each $X \in \R_{\geq 0}$. Recall that $B$ is the Ces\`aro mean. Then for any Borel measurable set $A \subset \R$ the probability $\mathbb{P} \left[ \frac{W(X)}{B \log X} \in A \right]$ satisfies a large deviation principle with speed $B \log X$ and rate function
\begin{equation}
I(x) \defeq \sup_{\theta \in \R} \left\{ \theta x - \int_{\R}(e^{\theta y} - 1) \rho(dy) \right\}.
\end{equation}
In particular, for any Borel measurable set $A \subset \R$, we have
	\begin{equation}
	- \mathrm{inf}_{x \in A^\circ} I(x) \leq \liminf_{X \to \infty} \frac{\log \mathbb{P} \left[ \frac{W(X)}{B \log X} \in A \right]}{B \log X} \leq \limsup_{X \to \infty} \frac{\log \mathbb{P}\left[ \frac{W(X)}{B \log X} \in A \right]}{B \log X} \leq -\mathrm{inf}_{x \in \overline{A}} I(x).
	\end{equation}
	\end{theorem}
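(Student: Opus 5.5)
The plan is to apply the G\"artner--Ellis theorem with speed $B\log X$. It therefore suffices to show that for every $\theta\in\R$
\[ \lim_{X\to\infty}\frac{1}{B\log X}\log \mathbb{E}\big[e^{\theta W(X)}\big] = \Lambda_g(\theta) \defeq \int_{\R}(e^{\theta y}-1)\,\rho(dy), \]
and that $\Lambda_g$ is finite and differentiable on all of $\R$. Both properties follow from Condition \ref{condition:LDP} by dominated convergence, since $\int e^{\theta y}\rho(dy)<\infty$ for every $\theta$. The rate function is then the Legendre transform of $\Lambda_g$, which is exactly the stated $I$. For $g(P)=1$ we have $\rho=\delta_1$ and $\Lambda_g(\theta)=e^\theta-1$, whose Legendre transform is $x\log x-x+1$ for $x\ge 0$ and $+\infty$ otherwise. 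This gives Theorem \ref{introthm:LDP}.

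To compute the moment generating function, write
\[ \mathbb{E}[e^{\theta W(X)}] = \frac{1}{N(X)}\sum_{O\in\mathcal N(X)} e^{\theta g(O)} \]
and use strong additivity. The generating function $\sum_{O} e^{\theta g(O)} z^{\ell(O)}$ factors as the Euler product
\[ F_\theta(z)=\prod_{P\in\mathcal P}\Big(1+e^{\theta g(P)}\frac{z^{\ell(P)}}{1-z^{\ell(P)}}\Big). \]
Dividing by $\zeta(z)=\prod_P(1-z^{\ell(P)})^{-1}$ gives
\[ H_\theta(z)=\frac{F_\theta(z)}{\zeta(z)}=\prod_P\Big(1+(e^{\theta g(P)}-1)z^{\ell(P)}\Big). \]
Setting $z=u/\Lambda$ and taking logarithms, the main term is
\[ \sum_P (e^{\theta g(P)}-1)\frac{u^{\ell(P)}}{\Lambda^{\ell(P)}}. \]
The remaining terms are of order $\sum_P(e^{\theta g(P)}-1)^2\Lambda^{-2\ell(P)}$, and a careful version must also handle factors that vanish or are negative. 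Next I would relate the main term to the measures $\rho_X$. By \eqref{convmeas} and \eqref{M},
\[ \sum_{P\in\mathcal P(X)}(e^{\theta g(P)}-1)\Lambda^{-\ell(P)}=M(X)\int(e^{\theta y}-1)\rho_X(dy)=(\Lambda_g(\theta)+o(1))\,B\log X. \]
Partial summation, as in Lemma \ref{logas}, should then give
\[ \log F_\theta(u/\Lambda)=\big(B+B\Lambda_g(\theta)+o(1)\big)\log\frac{1}{1-u}\qquad (u\to1^-). \]

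The step I expect to be the main obstacle, and the one corresponding to Lemma \ref{LDP:lemma4}, is passing from this Abelian information to the partial sums $\sum_{n\le X}[z^n]F_\theta$. Those sums are needed to conclude
\[ \frac{1}{N(X)}\sum_{O\in\mathcal N(X)} e^{\theta g(O)} = X^{B\Lambda_g(\theta)+o(1)}, \]
after which \eqref{N} finishes the computation. We only need the exponent at logarithmic precision, and the coefficients of $F_\theta$ are nonnegative. So I would not invoke a full Tauberian theorem and would instead use two crude bounds with $u=1-1/X$. For the upper bound, Rankin's trick gives
\[ \sum_{n\le X}[z^n]F_\theta\,\Lambda^{-n}\le u^{-X}F_\theta(u/\Lambda)=X^{B(1+\Lambda_g(\theta))+o(1)}. \]
The lower bound is the delicate half: it needs monotonicity in $X$ to keep the mass from concentrating beyond $X$, e.g.\ by comparing $F_\theta(u/\Lambda)$ at $u=1-X^{-1+\delta}$ and letting $\delta\to0$. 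Converting the $\Lambda^{-n}$-weighted partial sums into unweighted ones proceeds by the same Abel summation used for \eqref{N}. That conversion changes the exponent of $X$ by exactly $1$ in both numerator and denominator, so the ratio yields $X^{B\Lambda_g(\theta)+o(1)}$. Finally I would check the remaining G\"artner--Ellis hypothesis, that the origin lies in the interior of the effective domain, which holds because that domain is all of $\R$.
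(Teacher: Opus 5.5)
\textbf{There is a genuine gap in the final conversion step.}

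Your generating-function set-up is correct. This covers the Euler product for $F_\theta$ and the identity $H_\theta=F_\theta/\zeta=\prod_P\bigl(1+(e^{\theta g(P)}-1)z^{\ell(P)}\bigr)$. Incidentally, the factors of $H_\theta$ at $z=u/\Lambda$ are automatically $\geq 1-\Lambda^{-1}>0$, so your worry about vanishing or negative factors is moot. It also covers the asymptotic $\log F_\theta(u/\Lambda)=(\beta+o(1))\log\frac{1}{1-u}$ with $\beta\defeq B\int e^{\theta y}\rho(dy)$; the quadratic remainder is $O(1)$ by Condition \ref{condition:LDP} applied at $2\theta$. Since $a_n\defeq\Lambda^{-n}[z^n]F_\theta\geq 0$, your Rankin bound and the $u=1-X^{-1+\delta}$ device do give $A_\theta(X)\defeq\sum_{n\leq X}a_n=X^{\beta+o(1)}$.

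The argument breaks at the conversion to the quantity you actually need, $T_\theta(X)\defeq\sum_{O\in\mathcal N(X)}e^{\theta g(O)}=\sum_{n\leq X}a_n\Lambda^n$. Abel summation gives
\[ \Lambda^{-X}T_\theta(X)=A_\theta(X)-(1-\Lambda^{-1})\sum_{k\geq 1}\Lambda^{-(k-1)}A_\theta(X-k)=\sum_{k\geq 1}(1-\Lambda^{-1})\Lambda^{-(k-1)}\bigl(A_\theta(X)-A_\theta(X-k)\bigr). \]
The two terms of size $X^{\beta+o(1)}$ cancel. What remains depends only on the $a_n$ with $X-n=O(1)$; indeed, by \eqref{N}, the uniform measure on $\mathcal N(X)$ lives on lengths $X-O(1)$.

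Information of the form $A_\theta(X)=X^{\beta+o(1)}$, or about $F_\theta(u/\Lambda)$ for $1-u=X^{-1+o(1)}$, only sees averages over windows of length $\asymp X$. It says nothing about increments over bounded windows. For example, the nonnegative sequence with $a_{2^j}=2^{j\beta}$ and $a_n=0$ otherwise has $A(X)=X^{\beta+o(1)}$. Yet at $X=3\cdot 2^j$ it gives $\Lambda^{-X}T(X)=O(X^{\beta}\Lambda^{-2^j})$.

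So ``the same Abel summation as for \eqref{N}'' is not available. For $N(X)$ the local information is contained in \eqref{N} itself, and you have no analogue of \eqref{N} for $F_\theta$. All your bounds actually yield is $T_\theta(X)/N(X)\leq X^{B\Lambda_g(\theta)+1+o(1)}$, with no matching lower bound. At speed $\log X$, a spurious factor $X$ shifts the G\"artner--Ellis limit by $1/B$, so $\Lambda_g$ is not identified.

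\textbf{How to repair it.} The missing local input must come from \eqref{N} at shifted arguments, which is what the paper's proof uses. In your notation, $F_\theta=\zeta H_\theta$ gives $T_\theta(X)=\sum_{m\geq 0}h_m N(X-m)$ with $h_m\defeq[z^m]H_\theta$, that is,
\[ \mathbb E\big[e^{\theta W(X)}\big]=\sum_{m\geq 0} h_m\, \frac{N(X-m)}{N(X)}. \]
This is exactly Lemma \ref{LPD:lemma1} after expanding $e^{\theta W}=\prod_P\bigl(1+(e^{\theta g(P)}-1)Z_P\bigr)$. Now \eqref{N} gives $N(X-m)/N(X)=\Lambda^{-m}(1-m/X)^{B-1}(1+O(1/\log X))$ uniformly for $m\leq X/2$. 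Your Abelian/Rankin analysis should then be applied to $H_\theta(u/\Lambda)$, whose exponent is $B\Lambda_g(\theta)$, rather than to $F_\theta$.

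Two things still need control afterwards:
\begin{enumerate}
\item The range of $m$ comparable to $X$, where $(1-m/X)^{B-1}$ is unbounded when $B<1$ and \eqref{N} is not uniform.
\item The signs of $h_m$ when $\theta g(P)<0$ for some $P$.
\end{enumerate}
The paper handles both by comparing $Z_P$ with independent Bernoulli variables $Y_P$ of parameter $\Lambda^{-\ell(P)}$. It truncates to $|g(P)|\leq C$ (Lemma \ref{LDP:lemma3-0}). It discards primes with $\ell(P)<\tilde k_X$ or $\ell(P)\geq k_X$, whose total weight is $o(\log X)$ by \eqref{M} (Lemma \ref{LDP:lemma3}). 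In Lemma \ref{LDP:lemma4} it keeps only configurations of at most $K(\log X)^2$ primes from $\mathcal B(X,C)$, so that $\sum\ell(P_i)=o(X/\log X)$ and Lemma \ref{LDP:lemma2}(iii) applies.
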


\begin{example} \label{example:LDP2}
In the case where $g$ counts the number of distinct prime orbits occurring in the decomposition of a general orbit $O$ (as in Theorem \ref{introthm:LDP}), the measure $\rho_X(A)$ is defined as $\rho_X(A) = 1$ if $1 \in A$, and $\rho_X(A) = 0$ if $1 \not\in A$. With this measure, the rate function is indeed as in \eqref{Ix}. 
\end{example} 

\begin{example} \label{example:LDP3}
One can use Theorem \ref{thm:LDP} to count the number of distinct prime orbits inside any subset $\mathcal{P}' \subset \mathcal{P}$ in the decomposition of a general orbit $O$. Denote by $\mathcal{P}'(X) := \mathcal{P}' \cap \mathcal{P}(X)$. Define $g$ to be a strongly additive function for some positive $\lambda > 0$:
\begin{equation}
g(P) = \begin{cases}
\lambda &\text{ if } P \in \mathcal{P'}, \\
0 &\text{ otherwise}.
\end{cases}
\end{equation}
Suppose that the following limit exists:
\[ \mathfrak{r} := \lim_{X \to + \infty} \frac{1}{M(X)} \sum_{P \in \mathcal{P}'(X)} \Lambda^{-\ell(P)}. \]
Then the limit of the Laplace transforms of the measures $\rho_X$ can be computed as
\[ \int_{\R} e^{\theta y} \rho_X(dy) = \frac{1}{M(X)} \sum_{P \in \mathcal{P}(X) \setminus \mathcal{P}'(X)} \Lambda^{-\ell(P)} + \frac{e^{\lambda \theta}}{M(X)} \sum_{P \in \mathcal{P}'(X)} \Lambda^{-\ell(P)} \underset{X \to +\infty}{\longrightarrow} (1-\mathfrak{r}) + e^{\lambda \theta} \mathfrak{r}. \]
Hence, one can take the measure $\rho$ characterised by $\rho(\{0\}) = 1 - \mathfrak{r}$ and $\rho(\{\lambda\}) = \mathfrak{r}$.
The rate function is
\begin{equation}
I(x) := \sup_{\theta \in \mathbb{R}} \Big\{ \theta x - e^{\lambda\theta} \mathfrak{r} + \mathfrak{r} \Big\} =
\begin{cases}
\frac{x}{\lambda} \log \Big(\frac{x}{\lambda\mathfrak{r}}\Big) - \frac{x}{\lambda} + \mathfrak{r} &\text{ if } \mathfrak{r} \neq 0 \text{ and } x \geq 0, \\
+ \infty &\text{ otherwise}.
\end{cases}
\end{equation}
In the case when $\mathfrak{r} \neq 0$, we have $I(\lambda \mathfrak{r}) = 0$. This reflects the fact that the analogue of {\normalfont \eqref{M}} of Theorem \ref{introthma} for the subset $\mathcal{P}'$ would be asymptotic to $B \lambda \mathfrak{r} \log X$ as $X$ grows arbitrarily large. On the other hand, when $\mathfrak{r} = 0$, then the rate function $I(x)$ is trivial, i.e. $I(x) = + \infty$.
\end{example}

\begin{example} \label{example:LDP4}
One can obtain trivial rate functions for strongly additive functions $g$ other than those constructed previously. For example, let $g(P) \coloneq \Lambda^{-\ell(P)}$ for all $P \in \mathcal{P}$. We use Taylor expansion to obtain
\[ \int_{\R} e^{\theta y} \rho_X(dy) = \frac{1}{M(X)} \sum_{\substack{P \in \mathcal{P}(X)}} \frac{e^{\Lambda^{-\ell(P)} \theta}}{\Lambda^{\ell(P)}} = \frac{1}{M(X)} \sum_{\substack{P \in \mathcal{P}(X)}} \sum_{k=0}^\infty \frac{1}{k!} \Lambda^{-(k+1) \ell(P)} \theta^k. \]
Using {\normalfont \eqref{M}} of Theorem \ref{introthma} implies that the above expression can be simplified to $1 + O(1/\log X)$. Taking $X \to +\infty$, we obtain that the rate function is given by $I(x) := \sup_{\theta \in \R} \{ \theta x \} = + \infty$.
\end{example} 

We prove the theorem by adapting the arguments presented in \cite{KP}; only Lemma \ref{LDP:lemma4} requires an entirely new argument. For each prime orbit $P$, we define the independent random variable $Y_P$ for which
\begin{equation}
	Y_P = 
	\begin{cases}
		1 &\text{ with probability } {\Lambda^{-\ell(P)}}, \\
		0 &\text{ otherwise }.
	\end{cases}
\end{equation}
For each $X \in \Z_{>0}$, we denote by $V(X)$ a uniformly chosen element from the set $\mathcal N(X).$ For each prime orbit $P$, we also define a random variable $Z_P$ such that
\begin{equation}
	Z_P =
	\begin{cases}
		1 &\text{ if } P \mid V(X), \\
		0 &\text{ otherwise }.
	\end{cases}
\end{equation}

\begin{lemma} \label{LPD:lemma1}
For distinct prime orbits $P_1, P_2, \cdots, P_k$, we have 
\[			\mathbb{E} \left[ \prod_{i=1}^k Z_{P_i} \right] = \frac{N(X - \sum_{i=1}^k \ell(P_i))}{N(X)} \quad \mbox{ and } \quad
			\mathbb{E} \left[ \prod_{i=1}^k Y_{P_i} \right] = {\Lambda^{-\sum\limits_{i=1}^k \ell(P_i)}}.
\]
\end{lemma}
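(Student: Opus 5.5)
The $Y$-statement is immediate and I would dispose of it first. The $Y_P$ are independent Bernoulli variables, so the expectation of their product is the product of their expectations:
\[ \prod_i \Lambda^{-\ell(P_i)} = \Lambda^{-\sum_i \ell(P_i)}. \]

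For the $Z$-statement I would argue by direct counting. Since $V(X)$ is uniform on $\mathcal N(X)$, and each $Z_P$ takes values in $\{0,1\}$, we have
\[ \mathbb{E}\Bigl[\prod_{i=1}^k Z_{P_i}\Bigr] = \frac{\#\{O \in \mathcal N(X) : P_i \mid O \text{ for all } i\}}{N(X)}. \]
So it suffices to count the general orbits $O$ of length at most $X$ in which every $P_i$ occurs.

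The key step is a bijection between this set and $\mathcal N\bigl(X - \sum_i \ell(P_i)\bigr)$. It rests on $\mathcal N$ being the free commutative monoid on $\mathcal P$ (written additively, with $\ell$ additive). Set $Q \defeq P_1 + \dots + P_k$. Because the $P_i$ are distinct, each occurs with multiplicity one in $Q$.
\begin{itemize}
\item If every $P_i$ divides $O$, then each $P_i$ has multiplicity at least one in $O$. Hence $O = Q + O'$ for a unique $O' \in \mathcal N$, by cancellation in the free monoid, and $\ell(O') = \ell(O) - \sum_i \ell(P_i) \leq X - \sum_i \ell(P_i)$.
\item Conversely, for any $O'$ of length at most $X - \sum_i \ell(P_i)$, the orbit $Q + O'$ has length at most $X$ and is divisible by every $P_i$.
\end{itemize}
These two maps are mutually inverse, which gives the claimed formula.

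If $\sum_i \ell(P_i) > X$, the count is zero. This is consistent with the formula under the convention that $N(Y) = 0$ for $Y < 0$; when $\sum_i \ell(P_i) = X$ we get $N(0) = N_0 = 1$. I would state this convention explicitly.

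There is no real obstacle here. The only point needing care is that distinctness of the $P_i$ is what makes the required multiplicities exactly one, so that subtracting $Q$ is the right shift.
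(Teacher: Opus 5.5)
Your proof is correct and follows essentially the same route as the paper. The $Y$-identity comes from independence of the Bernoulli variables, and the $Z$-identity from the decomposition $O = P_1 + \dots + P_k + O'$, which identifies the general orbits of length $\leq X$ containing every $P_i$ with $\mathcal N\bigl(X - \sum_i \ell(P_i)\bigr)$. Your points on distinctness and the convention $N(Y)=0$ for $Y<0$ just make explicit what the paper's one-line proof leaves implicit.
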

\begin{proof}
	The lemma follows from using the definition of $Y_{P_i}$ and $Z_{P_i}$. For the first equality, note that for a general orbit $O = P_1 P_2 \cdots P_k O'$, $\ell(O) \leq X$ is equivalent to $\ell(O') \leq X-\sum \ell(P_i)$. 
\end{proof}

\begin{notation} $\displaystyle{k_X \defeq {X}/{\mathrm{exp}(\sqrt[4]{\log X})}}$. This is chosen so that $\log k_X = \log X + o(\log X)$, which is used in later proofs.
\end{notation} 

\begin{lemma} \label{LDP:lemma2}
For distinct prime orbits $P_1, P_2, \cdots, P_k$ and sufficiently large $X$, we have 
	\begin{enumerate}
		\item 
		$\displaystyle{
		\mathbb{E} \left[ \prod_{i=1}^k Z_{P_i} \right] = \Lambda^{-\sum\limits_{i=1}^k \ell(P_i)} \cdot \left(1 - \frac{\sum_{i=1}^k \ell(P_i)}{X} \right)^{B-1} \cdot \left( 1 + O\Big(\frac{1}{\log X}\Big)\right).
		}$
		\item 
		$\displaystyle{
			\mathbb{E} \left[ \prod_{i=1}^k Z_{P_i} \right] \leq \mathbb{E} \left[ \prod_{i=1}^k Y_{P_i} \right] + O\Big(\frac{1}{\log X}\Big).
		}$
		\item If all the prime orbits $P_1, \cdots P_k$ satisfy $\ell(P_i) \leq k_X$, then for any $k < K (\log X)^2$ for some fixed $K > 0$,
		\begin{equation*}
		\mathbb{E} \left[ \prod_{i=1}^k Z_{P_i} \right]  = \mathbb{E} \left[ \prod_{i=1}^k Y_{P_i} \right] \cdot \left( 1 + O\Big(\frac{1}{\log X}\Big)\right)
		\end{equation*}
		\item There exists an explicit constant $L > 0$ such that for any sequence of real non-negative numbers $\{\theta_P\}$ as $P$ varies over the set of prime orbits, we have
		\begin{equation*}
			\mathbb{E} \Big[ \mathrm{exp} \Big( \sum_{P \in \mathcal P(X)} \hspace*{-3mm} \theta_P Z_P \Big)\Big] \leq L \mathbb{E} \Big[ \mathrm{exp} \Big( \sum_{P \in \mathcal P(X)}   \hspace*{-3mm} \theta_P Y_P \Big)\Big].
		\end{equation*}
 	\end{enumerate}
\end{lemma}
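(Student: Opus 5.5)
All four parts rest on the exact identity of Lemma \ref{LPD:lemma1}, $\mathbb{E}[\prod Z_{P_i}] = N(X-s)/N(X)$ with $s \defeq \sum_{i=1}^k \ell(P_i)$, combined with the asymptotic \eqref{N} of Theorem \ref{introthma}.

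For part (1), substitute \eqref{N} in numerator and denominator:
\[ \frac{N(X-s)}{N(X)} = \Lambda^{-s}\Big(\frac{X-s}{X}\Big)^{B-1}\frac{1+O(1/\log(X-s))}{1+O(1/\log X)}. \]
This gives (1) directly as long as $X-s$ is large. In the range where $X-s$ is bounded, or where $\log(X-s)$ is not comparable to $\log X$, I read the error term as absorbed into an implicit constant. Concretely, I will split according to whether $s\le X/2$ (then $\log(X-s)\asymp\log X$) or $s>X/2$. In the second range I bound everything crudely: $N(X-s)\le N(X/2)$, and $N(X/2)/N(X)\ll \Lambda^{-X/2}X^{|B-1|}$, which is $O(1/\log X)$.

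For part (2), I use the same split. When $s\le X/2$, part (1) gives $\mathbb{E}[\prod Z]=\Lambda^{-s}(1-s/X)^{B-1}(1+O(1/\log X))$. If $B\le 1$ the factor $(1-s/X)^{B-1}$ can exceed $1$, but it is at most $2^{1-B}$. Hence
\[ \Lambda^{-s}(1-s/X)^{B-1} - \Lambda^{-s} \le \Lambda^{-s}\cdot O(s/X). \]
Since $\Lambda>1$, the quantity $\Lambda^{-s}s$ is uniformly bounded, so this difference is $O(1/X)\subset O(1/\log X)$. When $s>X/2$, the crude bound above already gives $O(1/\log X)$.

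For part (3), we have $s\le k\,k_X\le K(\log X)^2 X/\exp(\sqrt[4]{\log X}) = o(X/\log X)$. Therefore $(1-s/X)^{B-1}=1+O(s/X)=1+O(1/\log X)$, and part (1) gives (3), with $\mathbb{E}[\prod Y_{P_i}]=\Lambda^{-s}$ by Lemma \ref{LPD:lemma1}.

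Part (4) is the main obstacle. I would expand both exponentials using $e^{\theta Z}=1+(e^\theta-1)Z$ for $Z\in\{0,1\}$:
\[ \mathbb{E}\Big[\exp\Big(\sum\theta_PZ_P\Big)\Big]=\sum_{S\subset\mathcal P(X)}\prod_{P\in S}(e^{\theta_P}-1)\,\mathbb{E}\Big[\prod_{P\in S}Z_P\Big], \]
and similarly for $Y$. All coefficients are non-negative because $\theta_P\ge 0$. It therefore suffices to show $\mathbb{E}[\prod_S Z_P]\le L\,\Lambda^{-s(S)}=L\,\mathbb{E}[\prod_SY_P]$ uniformly in $S$, i.e.
\[ N(X-s)\le L\,\Lambda^{-s}N(X)\quad\text{for all } 0\le s\le X. \]
By \eqref{N} and the positivity of $N(m)$ for all $m$ (note $N(m)\ge N_0=1$), there are constants $c_1,c_2>0$ with $c_1\Lambda^m(m+1)^{B-1}\le N(m)\le c_2\Lambda^m(m+1)^{B-1}$ for all $m\ge0$. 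This reduces the claim to bounding $((X-s+1)/(X+1))^{B-1}$ uniformly.

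If $B\ge1$ this ratio is at most $1$, and we are done. If $B<1$, the ratio equals $((X+1)/(X-s+1))^{1-B}$, which can be as large as $X^{1-B}$ when $s$ is close to $X$; so a uniform $L$ does not come for free. My fallback here is to exploit the structure of $S$. The subsets with $s(S)$ close to $X$ contribute weight $\prod(e^{\theta_P}-1)\Lambda^{-s}$, and $\mathbb{E}[\prod_S Z_P]\le 1/N(X)\cdot N(X-s)$ stays tiny. I would then compare the two sums by grouping terms according to $s$. Alternatively, I would use the submultiplicativity-type bound $N(X-s)N(s)\le N(X)\cdot(\text{poly})$ to absorb the polynomial factor. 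Settling this $B<1$ case, possibly with $L$ depending on $X$ only through a harmless factor, is where I expect the real work to lie.
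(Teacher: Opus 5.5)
Your arguments for (1)--(3) follow the paper's own proof: substitute \eqref{N} into the identity of Lemma \ref{LPD:lemma1}, expand $(1-s/X)^{B-1}$, and treat small and large $s$ separately. Your versions of (2) and (3) are correct. Your caveat on (1) is justified, not pedantic. The uniform factor $1+O(1/\log X)$ really fails when $X-s$ is bounded. For instance, in Example (FF) with $s=X-1$ the ratio of the two sides tends to $1-q^{-2}$. So your argument, like the paper's ``substitute \eqref{N}'', only gives (1) when $\log(X-s)\gg\log X$, e.g.\ for $s\le X/2$. That range covers what is needed in (3) and in Lemma \ref{LDP:lemma4}.

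For (4), your expansion via $e^{\theta Z}=1+(e^{\theta}-1)Z$ is exactly the paper's ``Taylor expansion plus $Z^r=Z$''. Your two-sided bound $c_1\Lambda^m(m+1)^{B-1}\le N(m)\le c_2\Lambda^m(m+1)^{B-1}$ settles the case $B\ge 1$ with $L=\max(1,c_2/c_1)$.

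The gap you flag for $B<1$ is genuine, but no grouping or submultiplicativity argument can close it: (4) is false as stated when $B<1$. This case occurs, e.g., in Examples (E) and (GA). Here is a counterexample.
\begin{itemize}
\item Since $M(X)\to\infty$, there are prime orbits $P_0$ of arbitrarily large length.
\item Take $X=\ell(P_0)$, set $\theta_{P_0}=\theta$, and set $\theta_P=0$ for all other $P$.
\item The left side is $1+(e^{\theta}-1)/N(X)$ and the right side is $L\bigl(1+(e^{\theta}-1)\Lambda^{-X}\bigr)$.
\item Letting $\theta\to\infty$ forces $L\ge\Lambda^X/N(X)\asymp X^{1-B}$, which is unbounded.
\end{itemize}
The paper's one-line proof does not address this. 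It tacitly uses a bound $\mathbb{E}[\prod_{P\in S}Z_P]\le L\,\mathbb{E}[\prod_{P\in S}Y_P]$ uniform in $S$. Neither (1) nor (2) provides this when $B<1$ and $s(S)$ is close to $X$.

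What your sandwich does give is (4) with $L=L_0\,(X+1)^{\max(0,1-B)}$. This $L$ is independent of the $\theta_P$ and, by the example above, optimal up to the constant. This weaker form suffices where (4) is actually used, namely Lemmas \ref{LDP:lemma3-0} and \ref{LDP:lemma3}. There the extra factor contributes at most $(1-B)/B$ to $\limsup \frac{1}{B\log X}\log(\cdot)$, and this constant is swamped by $-\theta\epsilon$ as $\theta\to\infty$. So the right way to finish is to weaken the statement of (4) to this $X$-dependent $L$, rather than search for a uniform constant.
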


\begin{proof} Since the statement involves only sufficiently large $X$, we may safely assume that $X>\sum_{i=1}^k \ell(P_i)$. 

Statement (i) follows from substituting \eqref{N} from Theorem \ref{introthma} into the numerator and denominator of the first formula of Lemma \ref{LPD:lemma1}.  

Statement (ii) follows from the fact that the exponent $B-1$ is independent of $X$. If $\sum_{i=1}^k \ell(P_i) = o ({X}/{\log X})$, then the statement follows from using the Taylor expansion for the term 
\[\Big(1 - \frac{\sum_{i=1}^k \ell(P_i)}{X} \Big)^{B-1} = \Big(1 - o\Big( \frac{1}{\log X} \Big) \Big)^{B-1} \] 
to prove the statement. Otherwise, the statement follows from the fact that  
\[\mathbb{E} \Big[ \prod_{i=1}^k Y_{P_i} \Big] = o \Big( \frac{1}{\log X} \Big). \]

To prove statement (iii), we observe that the given conditions on $\ell(P_i)$ and $k$ imply $\sum_{i=1}^k \ell(P_i)/X = o (1/{\log X})$. 
We use the Taylor expansion as in the proof of statement (ii) to show the desired statement. 

Statement (iv) follows by inserting the Taylor expansion of the exponential function and using the following identities that hold for any set of non-negative integers $\{r_i\}_{i=1}^k$:
\[ 
	\mathbb{E} \left[ \prod_{i=1}^k Z_{P_i}^{r_i} \right] = \mathbb{E} \left[ \prod_{i=1}^k Z_{P_i} \right], \hspace{15pt} \mathbb{E} \left[ \prod_{i=1}^k Y_{P_i}^{r_i} \right] = \mathbb{E} \left[ \prod_{i=1}^k Y_{P_i} \right]. \qedhere
\]
\end{proof}

We state the exponential Chebyshev/Markov Inequality, which we will use in upcoming lemmas.
\begin{lemma}[{\cite[{Ch.\ I, \S6, (34)}]{Shiryaev}}] \label{LDP:expCheb}
Let $X$ be an integrable random variable whose variance is non-zero and finite. Then for any $\epsilon > 0$ and any $\theta > 0$, 
\[
\pushQED{\qed} 
\log \mathbb{P}[X \geq \epsilon] \leq \log \mathbb{E}\big[ e^{\theta X} \big]-\theta \epsilon.
\qedhere
\popQED
\]  

\end{lemma}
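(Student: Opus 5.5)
This is the exponential Chebyshev (Markov) inequality, so the plan is to apply Markov's inequality to the non-negative random variable $e^{\theta X}$ and take logarithms.

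First, for fixed $\theta>0$ the map $t \mapsto e^{\theta t}$ is strictly increasing. The events $\{X \geq \epsilon\}$ and $\{e^{\theta X} \geq e^{\theta\epsilon}\}$ therefore coincide. If $\mathbb{E}[e^{\theta X}]=+\infty$, the right-hand side is $+\infty$ and there is nothing to prove, so we may assume the moment generating function is finite at $\theta$.

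Next, apply Markov's inequality to the non-negative variable $e^{\theta X}$ at level $e^{\theta\epsilon}>0$. This gives
\[ \mathbb{P}[X \geq \epsilon] = \mathbb{P}\big[e^{\theta X} \geq e^{\theta \epsilon}\big] \leq e^{-\theta\epsilon}\, \mathbb{E}\big[e^{\theta X}\big]. \]
Markov's inequality itself follows from $e^{\theta\epsilon}\,\mathbf{1}_{\{e^{\theta X}\geq e^{\theta\epsilon}\}} \leq e^{\theta X}$ pointwise, by taking expectations.

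Finally, take logarithms, with the convention $\log 0 = -\infty$ when the probability vanishes, to obtain $\log \mathbb{P}[X\geq\epsilon] \leq \log \mathbb{E}[e^{\theta X}] - \theta\epsilon$.

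None of these steps is a real obstacle. The only point needing care is the degenerate cases, namely an infinite moment generating function or a zero probability, and both are handled by the conventions above. The hypotheses on the variance are not actually needed for the inequality.
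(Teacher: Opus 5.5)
Your proof is correct: applying Markov's inequality to $e^{\theta X}$ and taking logarithms is the standard derivation of this exponential Chebyshev inequality, and you handle the degenerate cases properly. The paper gives no proof of its own and simply cites Shiryaev for the result; you are also right that the variance hypothesis is not needed.
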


\begin{notation} $\displaystyle{\tilde{k}_X \defeq X^{\frac{1}{\log \log X}}}.$ This is chosen, so that $\log \tilde{k}_X = o(\log(X))$, which is used in later proofs.
\end{notation} 

We first show that it suffices to consider the problem over subset of orbits $O$ such that all its prime components $P$ satisfy $|g(P)| \leq C$ for some sufficiently large $C$.
\begin{lemma} \label{LDP:lemma3-0}
For any $\epsilon > 0$, we have
\begin{equation}
\limsup_{C \to \infty} \limsup_{X \to \infty} \frac{1}{B \log X} \log \mathbb{P} \Bigg[ \Big| \sum_{\substack{P \in \mathcal P(X), |g(P)| > C}} \hspace*{-5mm} g(P) Z_P \Big| \geq \epsilon B \log X \Bigg] = - \infty.
\end{equation}
\end{lemma}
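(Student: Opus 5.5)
We split the sum into its positive and negative parts, $g(P)>C$ and $g(P)<-C$. It then suffices to show that each of $\mathbb P[\sum_{g(P)>C} g(P)Z_P \geq \tfrac{\epsilon}{2} B\log X]$ and $\mathbb P[\sum_{g(P)<-C} (-g(P))Z_P \geq \tfrac{\epsilon}{2} B\log X]$ is super-polynomially small in the required sense. By symmetry we treat the first. Since all coefficients are nonnegative, we apply the exponential Chebyshev inequality (Lemma \ref{LDP:expCheb}) with a parameter $\theta>0$ to be taken large:
\[
\log \mathbb P\Big[\sum_{P\in\mathcal P(X),\, g(P)>C} g(P)Z_P \geq \tfrac{\epsilon}{2}B\log X\Big] \leq \log \mathbb E\Big[\exp\Big(\theta\hspace*{-3mm}\sum_{P\in\mathcal P(X),\, g(P)>C}\hspace*{-3mm} g(P)Z_P\Big)\Big] - \tfrac{\theta\epsilon}{2}B\log X .
\]
Next we use Lemma \ref{LDP:lemma2}(iv) with $\theta_P=\theta g(P)\mathbf 1_{g(P)>C}\geq 0$. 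This replaces the $Z_P$ by the independent $Y_P$ at the cost of a constant $L$, which disappears after dividing by $B\log X$. Independence then gives
\[
\log\mathbb E\Big[\exp\Big(\sum \theta_P Y_P\Big)\Big] = \sum_{P\in\mathcal P(X),\,g(P)>C}\log\big(1+\Lambda^{-\ell(P)}(e^{\theta g(P)}-1)\big)\leq \sum_{P\in\mathcal P(X),\,g(P)>C}\Lambda^{-\ell(P)}(e^{\theta g(P)}-1).
\]
By the definition \eqref{convmeas} of $\rho_X$, the right-hand side equals $M(X)\int_{y>C}(e^{\theta y}-1)\,\rho_X(dy)$.

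We then control this integral uniformly in $X$ through Condition \ref{condition:LDP}. The integrand is at most $e^{\theta y}\leq e^{-\theta y'} e^{(\theta+1) y}$ on $y>C$ (in any case, $\mathbf 1_{y>C}e^{\theta y}\leq e^{-C}e^{(\theta+1)y}$). Convergence of the Laplace transforms at $\theta+1$ gives
\[
\limsup_{X\to\infty}\int_{y>C} e^{\theta y}\rho_X(dy)\leq e^{-C}\int_\R e^{(\theta+1)y}\rho(dy)=:e^{-C}K_\theta<\infty .
\]
The negative part is handled in the same way, using the transform at $-(\theta+1)$. Combining this with \eqref{M}, namely $M(X)=B\log X+o(\log X)$, we obtain
\[
\limsup_{X\to\infty}\frac{1}{B\log X}\log\mathbb P[\cdots]\leq e^{-C}K_\theta-\frac{\theta\epsilon}{2}.
\]
Letting $C\to\infty$ gives $\limsup_C\limsup_X\leq -\theta\epsilon/2$. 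Since $\theta>0$ was arbitrary, the limit is $-\infty$.

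The main obstacle is the uniform-in-$X$ tail bound, since Condition \ref{condition:LDP} only gives pointwise convergence of the Laplace transforms. The trick of absorbing the indicator $\mathbf 1_{y>C}$ into the factor $e^{y-C}$ and evaluating the transform at the shifted parameter $\theta+1$ resolves this without requiring uniform integrability. A secondary point is that Lemma \ref{LDP:lemma2}(iv) is stated only for nonnegative $\theta_P$, which is exactly why we split into the two signs before applying it. Finally, the union bound over the two parts contributes only an additive $\log 2$, which is negligible at speed $B\log X$.
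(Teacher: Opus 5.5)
Your proof is correct and follows the same route as the paper: exponential Chebyshev, comparison of the $Z_P$ with the independent $Y_P$ via Lemma \ref{LDP:lemma2}(iv), the bound $\log(1+x)\le x$, \eqref{M} to trade $B\log X$ for $M(X)$, Condition \ref{condition:LDP}, and finally $C\to\infty$ followed by $\theta\to\infty$. Your two refinements are improvements over the paper's proof, which applies Lemma \ref{LDP:lemma2}(iv) with $\theta_P=\theta g(P)$ even though this is negative when $g(P)<-C$, and which bounds $\limsup_X\int_{|y|>C}(e^{\theta y}-1)\rho_X(dy)$ by the corresponding $\rho$-integral without justification from mere pointwise convergence of Laplace transforms; your sign split and the estimate $\mathbf 1_{y>C}e^{\theta y}\le e^{-C}e^{(\theta+1)y}$ (with the transform evaluated at $\pm(\theta+1)$) settle both points rigorously.
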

\begin{proof}
We will show that
\begin{equation} \label{limsup} 
\limsup_{C \to \infty} \limsup_{X \to \infty} \frac{1}{B \log X} \log \mathbb{P} \Bigg[ \sum_{\substack{P \in \mathcal P(X), |g(P)| > C}} \hspace*{-5mm}  g(P) Z_P \geq \epsilon B \log X \Bigg] = - \infty.
\end{equation}
One then shows in a similar way that 
\begin{equation*}
\limsup_{C \to \infty} \limsup_{X \to \infty} \frac{1}{B \log X} \log \mathbb{P} \Bigg[  \sum_{\substack{P \in \mathcal P(X), |g(P)| > C}} \hspace*{-5mm}  g(P) Z_P \leq -\epsilon B \log X \Bigg] = - \infty
\end{equation*}
so that the lemma follows. For the proof of \eqref{limsup}, we reason as follows: for any $\theta, \epsilon>0$, we can estimate
\begin{align*}
\begin{split}
&\limsup_{X \to \infty} \frac{1}{B \log X} \log \mathbb{P} \Bigg[ \sum_{\substack{P \in \mathcal P(X), |g(P)| > C}} \hspace*{-5mm}  g(P) Z_P \geq \epsilon B \log X \Bigg] \\
\overset{\ref{LDP:expCheb}}&{\leq} \limsup_{X \to \infty} \frac{1}{B \log X} \log \mathbb{E} \Bigg[ \mathrm{exp} \Big( \theta \hspace*{-5mm} \sum_{\substack{P \in \mathcal P(X), |g(P)| > C}} \hspace*{-5mm} g(P) Z_P \Big) \Bigg] - \theta \epsilon \\
\overset{ \ref{LDP:lemma2}\text{(iv)}}&{\leq}  \limsup_{X \to \infty} \frac{1}{B \log X} \log \mathbb{E} \Bigg[ \mathrm{exp} \Big( \theta \hspace*{-5mm}  \sum_{\substack{P \in \mathcal P(X), |g(P)| > C}}  \hspace*{-5mm} g(P) Y_P \Big) \Bigg] - \theta \epsilon.
\end{split}
\end{align*}
We use formula \eqref{M} from Theorem \ref{introthma} to replace the leading factor $1/B \log X$ in this formula by the counting function $M(X)$, and find that the above quantity can be further bounded by
\begin{align*}
&\leq \limsup_{X \to \infty} \frac{1}{M(X)}{\sum\limits_{\substack{P \in \mathcal P(X), |g(P)| > C}} \hspace*{-5mm}  \log ((e^{\theta g(P)} - 1) {\Lambda^{-\ell(P)}} + 1)}- \theta \epsilon \\
&\leq \limsup_{X \to \infty} \frac{1}{M(X)} {\sum\limits_{\substack{P \in \mathcal P(X), |g(P)| > C}}  (e^{\theta g(P)} - 1) {\Lambda^{-\ell(P)}}}- \theta \epsilon,
\end{align*}
replacing $\log(1+x)$ by $x$ for small $x$. 
Using Condition \ref{condition:LDP}, the above expression is further bounded by 
\begin{align*}
&\leq \limsup_{X \to \infty} \int_{|y| > C} (e^{\theta y} - 1) \rho_X(dy) - \theta \epsilon \leq  \int_{|y| > C} (e^{\theta y} - 1) \rho(dy) - \theta \epsilon.
\end{align*}
Because $\int_{|y| > C} (e^{\theta y} - 1) \rho(dy) \to 0$ as $C \to \infty$, it follows that 
\begin{equation*}
\limsup_{C \to \infty} \limsup_{X \to \infty} \frac{1}{B \log X} \log \mathbb{P} \Big[ \sum_{\substack{P \in \mathcal P(X), |g(P)| > C}} \hspace*{-5mm} g(P) Z_P \geq \epsilon B \log X \Big] = - \infty. \qedhere
\end{equation*}
\end{proof}
Given $C>0$, we define the following two sets: 
\begin{align}
	& \mathcal A(X,C) \defeq \left\{ P \in \mathcal P \colon k_X \leq \ell(P) \leq X \text{ or } \ell(P) < \tilde{k}_X, \text{ and } |g(P)| \leq C \right\}, \\
	&  \mathcal B(X,C) \defeq \left\{ P \in \mathcal P \colon \tilde{k}_X \leq \ell(P) \leq k_X, \text{ and } |g(P)| \leq C \right\}.
\end{align}
We first show that, for proving large deviation principles, contributions from prime orbits lying in the set $\mathcal A(X,C)$  can be ignored.
\begin{lemma} \label{LDP:lemma3}
	For any $\epsilon > 0$ we have
	\begin{equation}
	\limsup_{X \to \infty} \frac{1}{B \log X} \log \mathbb{P}\Bigg[ \Big| \sum_{P \in \mathcal A(X,C)} \hspace*{-3mm} g(P) Z_P \Big| \geq B \epsilon \log X \Bigg] = -\infty.
	\end{equation}
\end{lemma}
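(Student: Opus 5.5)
We follow the same scheme as the proof of Lemma \ref{LDP:lemma3-0}: the exponential Chebyshev inequality (Lemma \ref{LDP:expCheb}), then Lemma \ref{LDP:lemma2}(iv) to pass from $Z_P$ to $Y_P$, and finally (M) of Theorem \ref{introthma} to control the resulting sum.

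We treat the upper tail $\sum_{P\in\mathcal A(X,C)} g(P)Z_P \geq \epsilon B\log X$; the lower tail is identical after replacing $g$ by $-g$. It is convenient to bound $g(P)Z_P \leq C Z_P$, since $|g(P)|\leq C$ on $\mathcal A(X,C)$ and $Z_P\geq 0$. It therefore suffices to show, for every $\theta>0$,
\[ \limsup_{X\to\infty} \frac{1}{B\log X}\log \mathbb{P}\Big[\sum_{P\in\mathcal A(X,C)} Z_P \geq \tfrac{\epsilon}{C} B\log X\Big] \leq -\theta\tfrac{\epsilon}{C}. \]
Letting $\theta\to\infty$ then gives $-\infty$.

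Apply Lemma \ref{LDP:expCheb} with parameter $\theta$, and then Lemma \ref{LDP:lemma2}(iv) with $\theta_P=\theta$ on $\mathcal A(X,C)$ and $\theta_P=0$ elsewhere; the latter is allowed because all $\theta_P$ are non-negative. This bounds the log-probability by
\[ \log L + \sum_{P\in\mathcal A(X,C)} \log\big(1+(e^\theta-1)\Lambda^{-\ell(P)}\big) - \theta\tfrac{\epsilon}{C}B\log X. \]
Using $\log(1+x)\leq x$, the middle term is at most $(e^\theta-1)\sum_{P\in\mathcal A(X,C)}\Lambda^{-\ell(P)}$, which is at most $(e^\theta-1)$ times
\[ \big(M(X)-M(k_X)\big) + M(\tilde k_X). \]

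The key step is that both pieces are $o(\log X)$. By (M), $M(X)-M(k_X) = B\log X - B\log k_X + o(\log X)$, and this is $o(\log X)$ because $\log k_X = \log X - (\log X)^{1/4}$. Likewise $M(\tilde k_X) = B\log\tilde k_X + o(\log \tilde k_X)$, and $\log\tilde k_X = \log X/\log\log X = o(\log X)$.

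A subtlety needs care here: the error in (M) is $o(\log Y)$ at the argument $Y$. Differencing $M$ at $X$ and at $k_X$ therefore gives $o(\log X)$ and not better, but $o(\log X)$ is exactly what is needed. So, after dividing by $B\log X$, the middle term tends to $0$ for fixed $\theta$. Likewise $\log L/(B\log X)\to 0$. This leaves the bound $-\theta\epsilon/C$, and letting $\theta\to\infty$ concludes the proof.

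The main obstacle is precisely this middle estimate. One must make sure that the weak Mertens asymptotic (M), which carries only an $o(\log X)$ error, still forces the total weight $\sum\Lambda^{-\ell(P)}$ over the two extreme ranges of $\mathcal A(X,C)$ to be negligible on the scale $\log X$. The choices of $k_X$ and $\tilde k_X$ were made for exactly this purpose. Everything else is a direct transcription of the argument of Lemma \ref{LDP:lemma3-0}.
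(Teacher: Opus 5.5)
Your proposal is correct and follows the paper's proof essentially step for step. Both use exponential Chebyshev, then Lemma~\ref{LDP:lemma2}(iv) to pass from $Z_P$ to $Y_P$, then $\log(1+x)\leq x$, then the bound $\Sigma\leq M(X)-M(k_X)+M(\tilde k_X)=o(\log X)$ from \eqref{M}, and finally let $\theta\to\infty$. Your reduction to a single tail with $g(P)Z_P\leq CZ_P$ is a mild gain in rigor: the weights $\theta_P=\theta\geq 0$ then actually satisfy the hypothesis of Lemma~\ref{LDP:lemma2}(iv), whereas the paper applies (iv) directly to $\exp\bigl(\theta\,|\sum g(P)Z_P|\bigr)$, which is not literally of that form.
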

\begin{proof}
	Using statement (iv) of Lemma \ref{LDP:lemma2}, for any $\theta > 0$ and sufficiently large $X$ there exists an explicit constant $L> 0$ such that
	\begin{align*}
	& \log  \mathbb{E} \Bigg[ \mathrm{exp} \Big( \theta \cdot \big| \hspace*{-3mm} \sum_{P \in \mathcal A(X,C)} \hspace*{-3mm} g(P) Z_P \big| \Big) \Bigg] 
	\leq \log \mathbb{E} \Bigg[ \mathrm{exp} \Big( \theta \cdot \big| \hspace*{-3mm} \sum_{P \in \mathcal A(X,C)} \hspace*{-3mm} g(P) Y_P \big| \Big) \Bigg] + \log L \\
	&\leq \sum_{P \in \mathcal A(X,C)} \hspace*{-3mm} \log \Big( (e^{\theta g(P)}-1){\Lambda^{-\ell(P)}} + 1 \Big) + \log L \leq (e^{C\theta} - 1) \cdot \underbrace{\sum_{\substack{\ell < \tilde{k}_X \text{ or } \\ k_X \leq \ell \leq X}}\frac{P_\ell}{\Lambda^{\ell}}}_{=: \Sigma} + \log L = o(\log X). 
	\end{align*}
Indeed, by using Formula \eqref{M} from Theorem \ref{introthma}, we find 	
	\begin{align*}
		\Sigma &\leq M(X)-M(k_X) + M(\tilde{k}_X) \\
		 & = B \log X +o(\log X) - B \log k_X + \underbrace{o(\log k_X) + B \log \tilde{k}_X  + o(\log \tilde{k}_X)}_{=o(\log X)} \\
		 & = B \log X - B \log X + \sqrt[4]{\log X} + o(\log X) = o(\log X). 
	\end{align*}
Using Lemma \ref{LDP:expCheb}. we conclude
	\begin{align*}
	 \limsup_{X \to \infty} & \frac{1}{B \log X}\log \mathbb{P}\Bigg[ \Big| \sum_{P \in \mathcal A(X,C)} \hspace*{-3mm} g(P) Z_P \Big| \geq B \epsilon \log X \Bigg]  \\
	& \leq \limsup_{X \to \infty} \frac{1}{B \log X} \log  \mathbb{E} \Bigg[ \mathrm{exp} \Big( \theta \cdot \big| \hspace*{-3mm} \sum_{P \in \mathcal A(X,C)} \hspace*{-3mm} g(P) Z_P \big| \Big) \Bigg]  - \theta \epsilon 
	\leq -\theta \epsilon.
	\end{align*}
	We can then let $\theta$ to grow arbitrarily large to prove the lemma.
\end{proof}

Next, we show that the contributions from prime orbits lying in the sets $\mathcal B(X,C)$ can be understood by using the random variables $Y_P$. At this point, our arguments are completely different from the ones used in \cite[Lem.~3.7]{KP}, that are not applicable to our setup, because the error term in \eqref{N} is too weak. 

\begin{lemma} \label{LDP:lemma4}
	For any $\theta \in \R$, we have
	\begin{equation}
	\lim_{X \to \infty} \frac{1}{B \log X} \Bigg( \log \mathbb{E} \Big[ \mathrm{exp} \big( \theta \hspace*{-3mm} \sum_{P \in \mathcal B(X,C)} \hspace*{-3mm} g(P) Z_P \big) \Big] - \log \mathbb{E} \Big[ \mathrm{exp} \big( \theta \hspace*{-3mm} \sum_{P \in \mathcal B(X,C)} \hspace*{-3mm} g(P) Y_P \big) \Big] \Bigg) = 0.
	\end{equation}
\end{lemma}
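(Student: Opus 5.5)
We compare the two moment generating functions by expanding the exponentials as polynomials in the indicator variables, and then apply Lemma \ref{LDP:lemma2}(iii) termwise. Since $Z_P, Y_P \in \{0,1\}$, we have the exact identities
\[ \exp\Big(\theta \sum_{P \in \mathcal B} g(P) Z_P\Big) = \prod_{P \in \mathcal B}\big(1 + (e^{\theta g(P)}-1) Z_P\big) = \sum_{S \subseteq \mathcal B} \prod_{P \in S} (e^{\theta g(P)}-1) \prod_{P\in S} Z_P, \]
and the same holds with $Y_P$ in place of $Z_P$. Here $\mathcal B = \mathcal B(X,C)$, which is finite since every $P \in \mathcal B$ has $\ell(P)\le k_X$. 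Write $a_P \defeq e^{\theta g(P)}-1$. Since $|g(P)|\le C$, we have $|a_P| \le e^{|\theta| C}$. Then
\[ \mathbb E_Z \defeq \mathbb E\Big[e^{\theta \sum g(P) Z_P}\Big] = \sum_{S} \Big(\prod_{P\in S} a_P\Big)\, \mathbb E\Big[\prod_{P\in S} Z_P\Big], \]
and $\mathbb E_Y = \prod_{P\in\mathcal B}(1+a_P\Lambda^{-\ell(P)})$.

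We split the sum over $S$ according to $|S| < K(\log X)^2$ or $|S| \ge K(\log X)^2$. For the small subsets, every $P \in S$ satisfies $\ell(P)\le k_X$, so Lemma \ref{LDP:lemma2}(iii) applies. It gives $\mathbb E[\prod_S Z_P] = \Lambda^{-\sum_S \ell(P)}(1+O(1/\log X))$, with an implied constant that is uniform in $S$; we would verify this uniformity from the proof of Lemma \ref{LDP:lemma2}. The difficulty is that the $a_P$ may be negative when $\theta g(P)<0$, so the relative error cannot simply be factored out. We handle this by bounding the error in absolute value:
\[ \Big|\sum_{|S|<K(\log X)^2} \prod a_P\Big(\mathbb E\Big[\prod Z_P\Big] - \Lambda^{-\sum\ell(P)}\Big)\Big| \le \frac{c}{\log X}\prod_{P\in\mathcal B}\big(1+|a_P|\Lambda^{-\ell(P)}\big). \]
By \eqref{M}, the right-hand side is $\exp\big(O(e^{|\theta|C}) \cdot M(k_X)\big) = \exp(O(\log X))$. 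This is the main obstacle: the error $\exp(O(\log X))/\log X$ can exceed $\mathbb E_Y$ itself when cancellation occurs among the signed $a_P$. For $\theta g(P)\ge 0$ for all $P$ (for example $\theta\ge0$ with $g\ge 0$) there are no signs and the bound is harmless. In general we would first try to reduce to that case by splitting $\mathcal B$ into $\mathcal B^+$ and $\mathcal B^-$ according to the sign of $\theta g(P)$ and using Hölder's inequality with exponents $1+\delta$ and $(1+\delta)/\delta$ for an upper bound, and its reverse form for a lower bound. These bounds cost a factor $1+\delta$ in the exponent, which becomes negligible after letting $\delta\to 0$ at the level of $\frac{1}{B\log X}\log$. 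An alternative route is to write $1+a_PZ_P$ with $a_P\in(-1,0)$ through inclusion–exclusion on the complementary events $1-Z_P$, which reduces the terms to nonnegative coefficients.

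For the large subsets, $|S|\ge K(\log X)^2$, we bound the tail by Lemma \ref{LDP:lemma2}(ii), or directly by $\mathbb E[\prod_S Z_P]\le \Lambda^{-\sum\ell(P)}\cdot O(X^{|1-B|})$ using \eqref{N}. This gives a tail of size at most
\[ X^{O(1)}\sum_{m\ge K(\log X)^2} \frac{(e^{|\theta|C} M(k_X))^m}{m!}. \]
Since $M(k_X)=O(\log X)$, Stirling's formula shows that this tail is $\exp(-c(\log X)^2\log\log X)$ once $K$ is large, which is negligible compared to $\mathbb E_Y\ge \exp(-O(\log X))$. The same tail bound holds for the $Y$-expansion. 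Combining both parts gives $\mathbb E_Z = \mathbb E_Y(1+o(1)) + o(\mathbb E_Y)$ in the same-sign case. Taking logarithms then yields a difference of $o(1)$, and hence $o(\log X)$ after dividing by $B\log X$; the general case follows through the Hölder reduction above.
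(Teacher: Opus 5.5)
There is a genuine gap: your argument only covers the case where $\theta g(P)\ge 0$ for every $P\in\mathcal B(X,C)$, and neither proposed reduction handles the remaining case.

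\textbf{What works.} In the case $\theta g\ge 0$ your argument is sound. The subset expansion is exact by Lemma \ref{LPD:lemma1}. Lemma \ref{LDP:lemma2}(iii) is uniform in $S$, because $\sum_{P\in S}\ell(P)\le K(\log X)^2k_X=o(X/\log X)$. The tail is controlled by the direct bound $N(X-L)/N(X)\ll\Lambda^{-L}X^{|1-B|}$, together with Stirling and $\mathbb E_Y\ge X^{-B+o(1)}$. Lemma \ref{LDP:lemma2}(ii) would not do here, since its additive $O(1/\log X)$ is useless once summed over subsets.

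\textbf{The missing case.} The lemma is needed for every $\theta\in\R$, because G\"artner--Ellis requires the limit for all $\theta$. Already for $g\equiv 1$ in Theorem \ref{introthm:LDP}, every $\theta<0$ gives $a_P=e^{\theta}-1<0$ for all $P$. The expansion then has signs $(-1)^{|S|}$, and your error bound exceeds $\mathbb E_Y$ by a power of $X$.

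\textbf{Why the H\"older reduction fails.} First, splitting $\mathcal B$ by the sign of $\theta g(P)$ does not produce nonnegative coefficients. On $\mathcal B^-$ you are again in the alternating case, and for $g\equiv1$, $\theta<0$ one has $\mathcal B^+=\emptyset$, so nothing is reduced. Second, the bounds are not asymptotically sharp, even in the independent model where $\mathbb E[e^{U+V}]=\mathbb E[e^{U}]\,\mathbb E[e^{V}]$ holds exactly. The conjugate exponent $q=(1+\delta)/\delta$ tends to $\infty$, and $\frac1q\log\mathbb E[e^{qV_Y}]\ge\frac1q\sum_{P\in\mathcal B}\log(1-\Lambda^{-\ell(P)})\approx-\frac{B\log X}{q}$. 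So after normalising by $B\log X$ and letting $\delta\to0$, your upper bound tends to $\int_{\theta y\ge 0}(e^{\theta y}-1)\,\rho(dy)$. It loses the strictly negative term $\int_{\theta y<0}(e^{\theta y}-1)\,\rho(dy)$. The reverse form fails likewise: with $s=p/(1-p)$, the $V$-factor contributes $-\frac1s\int_{\theta y<0}(e^{s|\theta y|}-1)\,\rho(dy)$. This is strictly below $\int_{\theta y<0}(e^{\theta y}-1)\,\rho(dy)$ for every $s>0$, because $(e^{st}-1)/s\ge t>1-e^{-t}$ for $t>0$. Conjugate exponents cannot both tend to $1$, so the loss is not a factor $1+\delta$.

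\textbf{Why the complementary-event route fails.} Writing $1+a_PZ_P=(1+a_P)+|a_P|(1-Z_P)$ for $a_P<0$ does give nonnegative coefficients. But it trades $\mathbb E[\prod_{P\in S}Z_P]$ for avoidance probabilities $\mathbb P[Z_P=0\ \forall P\in T]$. The weights $\prod_{P\notin T}(1+a_P)\prod_{P\in T}|a_P|$ form a product probability measure, so the relevant $T$ are large, with $\mu_T=\sum_{P\in T}\Lambda^{-\ell(P)}\asymp\log X$. You would need $\mathbb P[Z_P=0\ \forall P\in T]=e^{-\mu_T+o(\log X)}$ uniformly in such $T$. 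This is a sieve statement that Lemmas \ref{LPD:lemma1} and \ref{LDP:lemma2} do not provide. Recovering it from $\mathbb E[\prod Z_P]$ by inclusion--exclusion reintroduces the signs, and the relative errors $O(1/\log X)$ accumulate to about $e^{\mu_T}/\log X$ against a main term $e^{-\mu_T}$.

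\textbf{What would be needed.} An additional input is required, for instance a Tauberian estimate, uniform in $X$, for the series $\zeta(z)\prod_{P\in\mathcal B}(1+a_Pz^{\ell(P)})$. Its coefficients $\sum_{\ell(O)=n}\prod_{P\mid O,\,P\in\mathcal B}e^{\theta g(P)}$ are nonnegative, and their partial sums up to $X$ equal $N(X)\,\mathbb E_Z$. Your subset expansion is exactly the coefficient extraction from this product, which is where positivity gets lost.

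\textbf{Comparison with the paper.} The paper keeps positivity differently, by grouping $\mathbb E_Z$ according to the exact set $\mathbf P\subseteq\mathcal B(X,C)$ of prime orbits that occur. Every term is then a probability times $e^{\theta\sum_{P\in\mathbf P}g(P)}>0$, so a uniform relative error factors out for either sign of $\theta$. Its tail argument for $r>K(\log X)^2$ is the same as yours. The price is again an avoidance probability, $\mathbb P[Z_P=0\ \forall P\in\mathcal B(X,C)\setminus\mathbf P]$. The paper evaluates it as $\prod_P(1-N(X-\ell(P))/N(X))$ and multiplies by $\mathbb P[\prod_{P\in\mathbf P}Z_P=1]$, i.e.\ as if the $Z_P$ were independent. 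So on either route this sieve-type estimate is the step that still needs a proof. For $\theta g\ge 0$, your argument is actually more self-contained than the paper's.
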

\begin{proof}
	We will focus on understanding the expression 
	\begin{align} \label{LDP:eq1}
	\begin{split}
\mathbb{E} \Big[ \mathrm{exp} \big( \theta \hspace*{-3mm} \sum_{P \in \mathcal B(X,C)} \hspace*{-3mm} g(P) Z_P \big) \Big]. 
	\end{split}
	\end{align}
We first study, for any $K>1$,  the sum 
\begin{equation} \label{LDP:eq1-1} \sum_{r=0}^{\lfloor K(\log X)^2 \rfloor} \hspace*{-3mm} \sum_{\substack{\mathbf P \subset \mathcal B(X,C) \\ \# \mathbf P=r}} \hspace*{-1mm} \mathbb{P}\left[ \prod_{P \in \mathbf P} Z_{P} = 1 \right] \cdot \mathbb{P}\left[ \begin{array}{cc} Z_{P} = 0 \\ \forall P \in \mathcal B(X,C) \setminus \mathbf P \end{array} \right] \cdot e^{ \sum\limits_{P \in \mathbf P} g(P) \theta}.
\end{equation} 	
	Fix a finite set $\mathbf P=\{P_1,\dots,P_r\} \subset \mathcal B(X,C)$. Because $r < K (\log X)^2$, Equation \eqref{M} and the holomorphicity of the Artin-Mazur dynamical zeta function at $z = 1/\Lambda^2$ imply
	\begin{align*}
	&\sum_{\substack{P \in \mathcal B(X,C) \\ P \not\in \mathbf P}} \hspace*{-3mm} \log \left( 1 - \frac{N(X - \ell(P))}{N(X)} \right)^{-1} 
	= \sum_{\substack{P \in \mathcal B(X,C) \\ P \not\in \mathbf P}} \frac{N(X - \ell(P))}{N(X)} + O(1) \\
	&= (1 + O(\frac{1}{\log X})) \sum_{\substack{P \in \mathcal B(X,C) \\ P \not\in \mathbf P}} \frac{1}{\Lambda^{\ell(P)}} + O(1) 
	= (1 + O(\frac{1}{\log X})) \hspace*{-3mm} \sum_{\substack{P \in \mathcal B(X,C) \\ P \not\in \mathbf P}}\hspace*{-3mm} \log \left(1 - \frac{1}{\Lambda^{\ell(P)}}\right)^{-1}.
	\end{align*}
	Hence we have
	\begin{align*}
	 & \mathbb{P}\left[ \begin{array}{cc} Z_{P} = 0 \\ \forall P \in \mathcal B(X,C) \setminus \mathbf P \end{array} \right]  
	= \prod_{\substack{P \in \mathcal B(X,C) \\ P \not\in \mathbf P}}  \left(1 - \frac{N(X - \ell(P))}{N(X)} \right) \\
	&= (1 + O(1/\log X)) \cdot \prod_{\substack{P \in \mathcal B(X,C) \\ P \not\in \mathbf P}}  \left(1 - \frac{1}{\Lambda^{\ell(P)}} \right)
	= (1 + O(1/\log X)) \cdot \mathbb{P}\left[ \begin{array}{cc} Y_{P} = 0 \\ \forall P \in \mathcal B(X,C) \setminus \mathbf P \end{array} \right].
	\end{align*}
	Using Statement (iii) of Lemma \ref{LDP:lemma2}, we have
\[ (\ref{LDP:eq1-1}) = \hspace*{-5mm} \sum_{r=0}^{\lfloor K(\log X)^2 \rfloor} \hspace*{-3mm} \sum_{\substack{\mathbf P \subset \mathcal B(X,C) \\ \# \mathbf P=r}} \hspace*{-1mm} \mathbb{P}\left[ \prod_{P \in \mathbf P} Y_{P} = 1 \right] \cdot \mathbb{P}\left[ \begin{array}{cc} Y_{P} = 0 \\ \forall P \in \mathcal B(X,C) \setminus \mathbf P \end{array} \right]  \cdot e^{ \sum\limits_{P \in \mathbf P} g(P) \theta}\cdot \Big(1 + O\big(\frac{1}{\log X}\big)\Big).
\]
Thus, we have been able, in the regime of small $r$, to replace $Z_P$ by $Y_P$ with a controlled error term. 
	
	Next we consider the complementary sum
	\begin{equation} \label{LDP:eq1-2}
 \sum_{r>K(\log X)^2} \,  \sum_{\substack{\mathbf P \subset \mathcal B(X,C) \\ \# \mathbf P=r}} \hspace*{-1mm} \mathbb{P}\left[ \prod_{P \in \mathbf P} Z_{P} = 1 \right] \cdot \mathbb{P}\left[ \begin{array}{cc} Z_{P} = 0 \\ \forall P \in \mathcal B(X,C) \setminus \mathbf P \end{array} \right] \cdot e^{ \sum\limits_{P \in \mathbf P} g(P) \theta}.
	\end{equation}
Setting $C^* = -C$ if $\theta < 0$, and $C^* = C$ if $\theta \geq 0$, with $-C \leq g(P) \leq C$, we have 
	\begin{align*}
	(\ref{LDP:eq1-2}) &\leq \sum_{r>K(\log X)^2}\, \sum_{\substack{\mathbf P \subset \mathcal B(X,C) \\ \# \mathbf P=r}} \mathbb{P}\left[ \prod_{P \in \mathbf P} Z_{P} = 1  \right] e^{C^* r \theta}.
	\end{align*}
We now overcount using 
$$ \sum_{\substack{\mathbf P \subset \mathcal B(X,C) \\ \# \mathbf P=r}}   \mathbb{P}\left[ \prod_{P \in \mathbf P} Z_{P} = 1  \right] = \sum_{\substack{\mathbf P \subset \mathcal B(X,C) \\ \# \mathbf P=r}}  \hspace*{-3mm}\Lambda^{-\sum\limits_{P \in \mathbf P} \ell(P)} \leq \frac{1}{r!} \Big( \sum_{P \in \mathcal B(X,C)}  \Lambda^{-\ell(P)} \Big)^r. $$
We now use \eqref{M} again for this sum, and we use Stirling's approximation for $r!$ with $r>K (\log X)^2$, to find 
\begin{align*} 
	(\ref{LDP:eq1-2})  &\leq \sum_{r > K (\log X)^2} \frac{1}{r!} (B \log X + o(\log X))^r e^{C^* r \theta} \\
	&\leq \sum_{r > K (\log X)^2} \left( \frac{e^{C^* \theta + 1} B}{K \log X} \right)^r = O \left( \left(\frac{e^{C^* \theta + 1} B}{K \log X} \right)^{K (\log X)^2} \right).
	\end{align*}
	Therefore, we have for some $K > 1$,
	\begin{align*}
	(\ref{LDP:eq1}) &= (\ref{LDP:eq1-1}) + (\ref{LDP:eq1-2}) 
		= \mathbb{E} \Big[ \mathrm{exp} \big( \theta \hspace*{-3mm} \sum_{P \in \mathcal B(X,C)} \hspace*{-3mm} g(P) Y_P \big) \Big]\cdot \Big(1 + O\big(\frac{1}{\log X}\big)\Big) + O \left( \left(\frac{e^{C^* \theta + 1} B}{K \log X} \right)^{K (\log X)^2} \right).
	\end{align*}
	Hence, as long as we have for sufficiently large $X$,
	\begin{equation} \label{LDP:eq1-3}
	\left(\frac{e^{C^* \theta + 1} B}{K \log X} \right)^{K (\log X)^2} = o \Bigg(\mathbb{E} \Big[ \mathrm{exp} \big( \theta \hspace*{-3mm} \sum_{P \in \mathcal B(X,C)} \hspace*{-3mm} g(P) Z_P \big) \Big]\cdot (\log X)^{-1} \Bigg),
	\end{equation}
	we can guarantee that for sufficiently large $X$, 
	\begin{equation}
	(\ref{LDP:eq1}) = \mathbb{E} \Big[ \mathrm{exp} \big( \theta \hspace*{-3mm} \sum_{P \in \mathcal B(X,C)} \hspace*{-3mm} g(P) Y_P \big) \Big]\cdot (1 + O(1/\log X));
	\end{equation}
the statement of the lemma then follows by dividing both sides by $\mathbb{E} \Big[ \mathrm{exp} \big( \theta \hspace*{-3mm} \sum\limits_{P \in \mathcal B(X,C)} \hspace*{-3mm} g(P) Y_P \big) \Big]$ and then taking logarithms on both sides of the  resulting equation. Finally, to show equation (\ref{LDP:eq1-3}), consider the following lower bound, which follows from the definition and \eqref{M}.  
	\begin{align*}
	\mathbb{E} \Big[ \mathrm{exp} \big( \theta \hspace*{-3mm} \sum\limits_{P \in \mathcal B(X,C)} \hspace*{-3mm} g(P) Y_P \big) \Big] & \geq \hspace*{-3mm} \prod_{P \in \mathcal B(X,C)} \hspace*{-3mm} \mathbb{P}[Y_P = 0] = \hspace*{-3mm} \prod_{P \in \mathcal B(X,C)} \left(1 - \frac{1}{\Lambda^{\ell(P)}} \right) \\ & = O \left( \frac{1}{\mathrm{exp}(B \log X + o(\log X))} \right) = O(X^{-B + o(1)}).
	\end{align*}
	Because we have for sufficiently large $X$ and any small enough $\epsilon > 0$, 
	\begin{equation*}
	\left(\frac{e^{C^* \theta + 1} B}{K \log X} \right)^{K (\log X)^2} = O \left(X^{-K (1- \epsilon) \cdot \log X \cdot \log \log X} \right),
	\end{equation*}
	it follows that equation (\ref{LDP:eq1-3}) holds for sufficiently large $X$. 
\end{proof}

We state the G\"artner-Ellis theorem, which we will use the prove Theorem \ref{thm:LDP}.
\begin{theorem}[{\cite[p.~152]{MehrdadZhu}, \cite[Thm.~3.2]{KP}}] \label{thm:GE} 
	Given a sequence of random variables $Z_n$ over $\R$, and a sequence of positive numbers $a_n$ such that $\lim_{n \to \infty} a_n = \infty$, suppose that the following limit
	\begin{equation*}
	\Lambda(\theta) \defeq \lim_{n \to \infty} \frac{1}{a_n} \log \mathbb{E}[\exp (\theta a_n Z_n)]
	\end{equation*}
	exists and is differentiable for every $\theta \in \R$. Then for any Borel measurable set $A \subset \R$, the probability $\mathbb{P}[Z_n \in A]$ satisfies a large deviation principle with speed $a_n$ and rate function $I(x) \defeq \sup_{\theta \in \R} \{\theta x - \Lambda(\theta)\}$.
\end{theorem}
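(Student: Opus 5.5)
This is the standard one-dimensional G\"artner--Ellis theorem, and I would prove it in the classical way: an upper bound from the exponential Chebyshev inequality (Lemma \ref{LDP:expCheb}), and a lower bound from an exponential change of measure (Cram\'er tilting). Differentiability of $\Lambda$ is what makes the lower bound work. Throughout, write $\mu_n$ for the law of $Z_n$ and $\Lambda_n(\theta) \defeq a_n^{-1}\log \mathbb{E}[\exp(\theta a_n Z_n)]$.

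First I would record the basic properties of $\Lambda$. Each $\Lambda_n$ is convex by H\"older's inequality, so the pointwise limit $\Lambda$ is convex, finite on all of $\R$ (by hypothesis) and satisfies $\Lambda(0)=0$. It follows that $I$ is convex, lower semicontinuous and non-negative. Since $\Lambda$ is finite near $0$, $I$ has compact sublevel sets, so $I$ is a good rate function.

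\emph{Upper bound.} For a half-line $[x,\infty)$ with $x$ to the right of $\Lambda'(0)$, Lemma \ref{LDP:expCheb} applied to $a_nZ_n$ gives, for every $\theta>0$,
\[
a_n^{-1}\log \mathbb{P}[Z_n\ge x]\le \Lambda_n(\theta)-\theta x .
\]
Letting $n\to\infty$ and then optimising over $\theta$ yields $-\inf_{y\ge x}I(y)$; half-lines extending to the left are handled symmetrically with $\theta<0$. A closed set $F$ missing $\Lambda'(0)$ is contained in the union of two such half-lines. Combined with the principle of the largest term, $\limsup a_n^{-1}\log(p_n+q_n)=\max$ of the individual limsups, this gives the upper bound for such $F$; if $\Lambda'(0)\in F$ then $\inf_F I=0$ and the bound is trivial. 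Finiteness of $\Lambda$ at some $\pm\theta_0$ also gives exponential tightness, $\mathbb{P}[|Z_n|>M]\le e^{-a_n(\theta_0M-\Lambda_n(\pm\theta_0))}$, which lets one pass from compact to general closed sets if preferred.

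\emph{Lower bound.} It suffices to prove, for every $x$ and $\delta>0$,
\[
\liminf_n a_n^{-1}\log\mathbb{P}[|Z_n-x|<\delta]\ge -I(x).
\]
I would first treat exposed points $x=\Lambda'(\eta)$. Define the tilted laws $d\tilde\mu_n(z)=e^{\eta a_n z-a_n\Lambda_n(\eta)}\,d\mu_n(z)$. On the event $\{|z-x|<\delta\}$ we have $d\mu_n\ge e^{-a_n(\eta x+|\eta|\delta-\Lambda_n(\eta))}d\tilde\mu_n$. Hence it remains to show $\tilde\mu_n(\{|z-x|<\delta\})\to1$. The limiting cumulant generating function of $\tilde\mu_n$ is $\theta\mapsto\Lambda(\theta+\eta)-\Lambda(\eta)$, which is differentiable with derivative $\Lambda'(\eta)=x$ at $\theta=0$. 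So the upper bound already proved, applied to $\tilde\mu_n$ with the closed set $\{|z-x|\ge\delta\}$, gives exponential decay of its $\tilde\mu_n$-mass, since the associated rate function is strictly positive away from $x$. Letting $\delta\to0$ gives $-(\eta x-\Lambda(\eta))=-I(x)$ at $x=\Lambda'(\eta)$.

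\emph{Main obstacle.} The hard part is passing from exposed points to all $x$ with $I(x)<\infty$. Since $\Lambda$ is finite and differentiable everywhere, $\Lambda'$ is continuous and monotone, and its range is an interval $(\alpha,\beta)$ (possibly infinite or with endpoints). Every point of this open interval is exposed, and $I=\infty$ outside its closure. At an endpoint $x$ with $I(x)<\infty$, I would approximate $x$ by exposed points $x_k\to x$ and use convexity and lower semicontinuity of $I$ along the segment to get $I(x_k)\to I(x)$. Since $(x_k-\delta/2,x_k+\delta/2)\subset(x-\delta,x+\delta)$ for $k$ large, the bound transfers to $x$. For an open set $G$, choosing $x\in G$ nearly attaining $\inf_G I$ then yields the lower bound.
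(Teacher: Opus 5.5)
Your argument is correct. Note that the paper gives no proof of Theorem \ref{thm:GE}: it imports the result from \cite{MehrdadZhu} and \cite{KP}. What you give is essentially the textbook proof of the G\"artner--Ellis theorem, specialised to $\R$: a Cram\'er-style half-line upper bound, then tilting at points $x=\Lambda'(\eta)$, with the upper bound reapplied to the tilted laws.

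Compared with the general $\R^d$ argument, you avoid two tools:
\begin{itemize}
\item Exponential tightness is not needed, because a closed set missing $\Lambda'(0)$ is covered by two half-lines, so the upper bound holds for all closed sets directly.
\item Rockafellar's lemma is not needed. In higher dimension it is what shows that exposed points suffice to compute $\inf_G I$. In one dimension it is replaced by the elementary fact that $\Lambda'$ is continuous and monotone. Hence its range is an interval whose closure contains $\{I<\infty\}$, and convexity of $I$ along a segment handles a non-attained endpoint.
\end{itemize}
What your route buys is a self-contained proof that shows exactly where the hypothesis enters: differentiability at $0$ of the tilted cumulant $\theta\mapsto\Lambda(\theta+\eta)-\Lambda(\eta)$ forces its rate function to vanish only at $x$. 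The citation buys brevity. In the paper's application the limit $\int_{-C}^{C}(e^{\theta y}-1)\,\rho(dy)$ is finite and smooth in $\theta$, so either route applies.

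Three small points should be made explicit.
\begin{itemize}
\item Invoke the exponential Chebyshev bound as plain Markov's inequality applied to $e^{\theta a_n Z_n}$. The variance hypothesis in Lemma \ref{LDP:expCheb} is superfluous and can fail, for instance when $Z_n$ is degenerate.
\item In the tilting step you need $\inf_{|y-x|\ge\delta}\tilde I(y)>0$, not merely pointwise positivity of $\tilde I$ away from $x$. This follows because the convex function $\tilde I$ is nonincreasing left of its unique zero $x$ and nondecreasing right of it. That is the same monotonicity you already use to identify the half-line bound with $-\inf_{y\ge x}I(y)$.
\item $\Lambda_n(\eta)$ is guaranteed finite only for $n$ large. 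That is all the tilt $\tilde\mu_n$ requires, but it should be said.
\end{itemize}
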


\begin{proof}[Proof of Theorem \ref{thm:LDP}]
For any real number $\theta$ we use Taylor expansion to show
\begin{align*}
\begin{split}
\frac{1}{B \log X} \log \mathbb{E} \Big[ \mathrm{exp} \big( \theta \hspace*{-3mm} \sum\limits_{P \in \mathcal B(X,C)} \hspace*{-3mm} g(P) Y_P \big) \Big] &= \frac{1}{B \log X}\log \hspace*{-3mm} \prod_{P \in \mathcal B(X,C)} \hspace*{-3mm} \mathbb{E} \left[ \mathrm{exp} (\theta g(P) Y_P) \right] \\
&= \frac{1}{B \log X} \hspace*{-1mm} \sum_{P \in \mathcal B(X,C)} \hspace*{-3mm} \log \Big( \frac{1}{\Lambda^{\ell(P)}} (e^{\theta g(P)} -1) + 1 \Big) \\
&= \frac{1}{B \log X} \Big(\sum_{P \in \mathcal B(X,C)} \frac{1}{\Lambda^{\ell(P)}} (e^{\theta g(P)} - 1) + O \big( \frac{P_\ell}{\Lambda^{2\ell}} \big) \Big).
\end{split}
\end{align*}
Statement \eqref{M} from Theorem \ref{introthma} can be used to replace $B \log X$ in the denominator by a prime count, so we find 
\begin{align*}
&  \lim_{X \to \infty} \frac{1}{B \log X} \log \mathbb{E} \Big[ \mathrm{exp} \big( \theta \hspace*{-3mm} \sum\limits_{P \in \mathcal B(X,C)} \hspace*{-3mm} g(P) Y_P \big) \Big]  \\
&= \lim_{X \to \infty} \frac{1}{M(X)} \sum\limits_{P \in \mathcal B(X,C)} {\Lambda^{-\ell(P)}} (e^{\theta g(P)} - 1) = \int_{-C}^C (e^{\theta y} - 1) \rho(dy),
\end{align*}
by Condition \ref{condition:LDP}. We note that one can always choose an increasing infinite sequence of values of $C$ to satisfy $\rho(\{C\}) = \rho(\{-C\}) = 0$. Then we use Lemma \ref{LDP:lemma4} to obtain
\begin{equation*}
\lim_{X \to \infty} \frac{1}{B \log X} \log\mathbb{E} \Big[ \mathrm{exp} \big( \theta \hspace*{-3mm} \sum\limits_{P \in \mathcal B(X,C)} \hspace*{-3mm} g(P) Z_P \big) \Big]= \int_{-C}^C (e^{\theta y} - 1) \rho(dy).
\end{equation*}
By the G\"artner-Ellis Theorem (see for example \cite[p.~152]{MehrdadZhu} or \cite[Thm.~3.2]{KP}), the probability $$\mathbb{P} \left[ \frac{1}{B \log X}\sum\limits_{P \in \mathcal B(X,C)} g(P) Z_P \in A \right]$$ satisfies a large deviation principle with rate function
\begin{equation}
I_C(X) \defeq \sup_{\theta \in \R} \left( \theta x - \int_{-C}^C (e^{\theta y} - 1) \rho(dy) \right).
\end{equation}
We now use Lemmas \ref{LDP:lemma3-0} and \ref{LDP:lemma3} and let $C \to \infty$ to obtain that the probability $\mathbb{P} \left[ \frac{W(X)}{B \log X} \in A\right]$ satisfies a large deviation principle with rate function $I(x) = \lim\limits_{C \to \infty} I_C(x)$. 
\end{proof}

\begin{remark} 
For $\Lambda=1$ with rational dynamical zeta function, the set of prime orbits is finite, of cardinality $B$. This implies that one expects $B$ prime orbits in the decomposition of any sufficiently large general orbit. Also, the probability that there are more than $B$ components is zero: $\mathbb{P}[W(X)>B]=0$ for all $X$. In this situation, there is no meaningful LDP. In case $\Lambda=1$ and the dynamical zeta function is irrational, we do not know whether interesting statistical results exist. 
\end{remark}

\begin{remark} 
The above arguments allow us to compute the expected value of $W$, as follows: \begin{align*} \mathbb{E}[W] &= \mathbb{E}[\hspace*{-2mm} \sum_{\ell(P) \leq X} \hspace*{-2mm}  Z_P] =  \mathbb{E}\Big[\hspace*{-2mm}  \sum_{\ell(P) \leq X} \hspace*{-2mm}  Y_P(1+O\Big(\frac{1}{\log X}\Big)\Big] = M(X)\Big(1+O\Big(\frac{1}{\log X}\Big)\Big) \\ &=  B \log X + o(\log X). \end{align*} 
This is a very weak analogue of the result of Hardy--Ramanujan. 
\end{remark}

\section{The Ces\`aro mean for FAD-systems} 

We recall the general context of FAD-systems from \cite{BCH}, and prove Theorem \ref{FADisgood}, i.e.,  that such systems satisfy the conditions of Theorem \ref{introthma}. 

\begin{definition} \label{deffad} A sequence $(a_n)$ of integers is called a \emph{gcd-sequence} if $\text{gcd}(a_m,a_n)=a_{\text{gcd}(m,n)}$ for all integers $m,n$. 
Let $\mathcal{X}$ denote a set, and $\sigma \colon \mathcal{X} \rightarrow \mathcal{X}$ a confined self map. We call $(\mathcal{X},\sigma)$ a \emph{FAD-system} if the numbers $\sigma_k$ form a FAD-sequence, meaning that
\begin{equation} \label{fadseq} \sigma_k = c^k |\det(A^k-1)| r_k \prod_{p \in S} |k|_p^{s_{p,k}} p^{-t_{p,k} |k|_p^{-1}} \end{equation}
where $c \in \R_{>0}$, $A$ is an integral matrix (with the convention that the entire factor $|\det(A^k-1)|$ may be absent), $r_k \in \R_{>0}$ is a periodic gcd-sequence, $S$ is a finite set of primes, and $s_{p,k}, t_{p,k} \in \R_{\geq 0}$ are periodic gcd-sequences of period coprime to $p$.  
\end{definition} 

\begin{remark} Not all sequences of integers of the form \eqref{fadseq} can be realised by a dynamical system. This `realisation problem' is discussed in \cite{BCH}. 
One can show, for example, that for realisable sequences, $c \in \Z_{>0}$ and \emph{if} $t_{p,k} \in \Z_{ \geq 0}$ for all $p,k$, then $r_k$ and $p^{s_{p,k}}$ take rational values \cite[\S 10.2]{BCH}. 
\end{remark} 

This somewhat exotic looking concept in fact embraces the following natural examples:  
\begin{enumerate}
\item Confined endomorphisms $\sigma$ of algebraic groups $G$ (abelian varieties, linear algebraic groups, \dots) over $\overline \F_p$, then $\mathcal X = G(\overline \F_p)$ and $S=\{p\}$; see \cite[Thm.~8.2.1]{BCH}.
\item Additive cellular automata $\sigma$ with $\mathcal X = V^{\Z}$ and  $V=\F_p^r$ (i.e., continuous---for the product topology of the discrete topology on $V$---such maps that are additive and commute with the operation of shifting the bi-infinite sequence one to the right); again with $S=\{p\}$, $c=p^a$ for some integer $a \geq 0$, the determinant factor is absent, $r_n=1$, $s_n=0$; see \cite[Prop.~9.2.3]{BCH}. In fact, this collection of examples corresponds in a precise sense to endomorphisms defined over $\F_p$ of vector groups $\Ga^r$, see \cite{BCca}.  
\item Solenoids given as $S$-integer dynamical systems for global fields in the sense of Chothi--Everest--Ward \cite{Chothi} with finite $S$, where $S$ is indeed $S$ for a number field, and $t_{p,n}=0$; and $S=\{p\}$ for a function field of characteristic $p>0$; see \cite[Prop.~9.1.4]{BCH}. Mertens-style theorems in this setup are discussed in \cite{Everest}. 
\end{enumerate}

\begin{proof}[Proof of Theorem \ref{FADisgood}] We use the following asymptotic result about $\sigma_k$, see \cite[Thm.~12.3.1(i)]{BCH} 
\begin{equation} \label{start0}    \sigma_k = \underbrace{4^m \prod_{j=1}^m \sin^2(k \theta_j/2)   r_k \prod_{p \in S} |k|_p^{s_{p,k}} p^{-t_{p,k} |k|_p^{-1}}}_{= b_k} \Lambda^k + O(\Lambda^{\vartheta k}) \mbox{ for some }\vartheta<1,   \end{equation}
where $\{ e^{i \theta_j}, e^{-i \theta_j}\}_{j=1}^m$ are the eigenvalues of $A$ on the unit circle, see \cite[Rem.~10.3.11, Rem.~12.2.4]{BCH}; if there are no such values, we set $m=0$ and the corresponding empty product to $1$. 

\begin{definition} \label{fluctang} We call $\{ \pi \} \cup \{\theta_j\}_{j=1}^m$ the \emph{fluctuation angles} of $\sigma$. 
\end{definition} 

The fact that $\vartheta<1$ implies that $B=\mathcal C(b_k)$. 
We first simplify the trigonometric factor $$T_k \defeq \prod_{j=1}^m (2 \sin(k \theta_j/2))^2  =  \prod_{j=1}^m (2-e^{ik \theta_j} - e^{-k\theta_j}) =  \sum_{\epsilon\,\in\,\{-1,0,1\}^m} c_\epsilon e^{ik\Theta_\epsilon},$$
where $\Theta_\epsilon = \sum_{i=1}^m \epsilon_i \theta_i$ and
$c_\epsilon = \prod_{i=1}^m d_{\epsilon_i}$ with $d_0 = 2$, $d_{\pm 1} = -1$.
It suffices to prove that for each $\Theta\in\R$
the Ces\`aro mean
$ L(\Theta) \defeq \mathcal C(\tilde b_k e^{ik\Theta})$
exists, where $\tilde b_k \defeq r_k \prod_{p \in S} |k|_p^{s_{p,k}} p^{-t_{p,k} |k|_p^{-1}}$; then the full Ces\`aro mean is
\begin{equation} \label{BasL} B = \sum_{\epsilon} c_\epsilon L(\Theta_\epsilon). \end{equation} 

To show this, we first regroup the remaining sum according to the $p$-adic absolute values of $k$, for $p \in S$. More precisely, for a tuple $\mathbf{j} = (j_p)_{p\in S} \in \Z_{\geq 0}^{|S|}$, set
$S_\mathbf{j} =\{ k \in \Z_{\geq 0} \colon v_p(k)=p^j, \forall p \in S\}$. Decompose
\[
L(\Theta) =  \sum_{\mathbf{j} \geq \mathbf 0}
  \frac{1}{X}\sum_{k\leq X}
  \mathbf 1_{k \in S_\mathbf{j}} \tilde b_k e^{ik\Theta}.
\]
Since $S_\mathbf{j}$ is a finite intersection of a finite union of arithmetic progressions, its characteristic function is periodic. Also, if $k \in S_\mathbf{j}$, then $|k|_p$ is fixed, so $\tilde b_k$ is also a periodic function in $k$ restricted to a fixed $S_{\mathbf{j}}$, by the periodicity of $r_k, s_{p,k}$ and $t_{p,k}$. This means that $\beta_\mathbf{j}(k) \defeq  \mathbf 1_{k \in S_\mathbf{j}} \tilde b_k$ is periodic in $k$, say, of period $P_\mathbf{j}$, so we can expand it in a discrete Fourier series for the group $\Z/P_{\mathbf j}\Z$, as follows: 
$$\beta_\mathbf{j}(k) = \sum_{n=0}^{P_{\mathbf j}-1} \hat{\beta}_\mathbf{j}(N)\,e^{2\pi ink/P_\mathbf{j}},$$ and then 
\[
L(\Theta) =  \sum_{\mathbf{j} \geq \mathbf 0} \sum_{n=0}^{P_{\mathbf{j}}-1} \hat\beta_\mathbf{j}(N) \mathcal C (e^{2 \pi i nk/P_{\mathbf{j}} + i k \Theta}).
\] 
For a fixed angle $\gamma$, we have the Ces\`aso limit $\mathcal C (e^{im\gamma})= \mathbf{1}_{\gamma \in 2\pi\Z}$ (summing the geometric series). Therefore
\[
L(\Theta) =  \sum_{\mathbf{j} \geq \mathbf 0} \sum_{n=0}^{P_{\mathbf{j}}-1} \hat\beta_\mathbf{j}(N) \mathbf 1_{\frac{n}{P_\mathbf{j}} + \frac{\Theta}{2 \pi} \in \Z}.
\] 
We need to establish the convergence of this sum. There is at most one value $n_\mathbf{j} \in \{0,\dots,P_\mathbf{j}-1\}$ for which $n_\mathbf{j}/P_\mathbf{j}+{\Theta}/{2 \pi} \in \Z$ (and the existence requires $\Theta \in \pi \Q$). Then, by definition of the discrete Fourier transform, we have 
\begin{equation} \label{LTheta} 
L(\Theta) =  \sum_{\mathbf{j} \geq \mathbf 0}  \hat\beta_\mathbf{j}(n_\mathbf{j}) = \sum_{\mathbf{j} \geq \mathbf 0} \frac{1}{P_\mathbf{j}} \sum_{a=0}^{P_\mathbf{j}-1} \tilde b_a \mathbf 1_{a \in S_{\mathbf j}} e^{-2 \pi i n_{\mathbf{j}} a/P_\mathbf{j}}.
\end{equation} 
Now from its definition, $\tilde b_a \leq C$ for the absolute constant $C=\max\{r_a \colon a = 0,\dots,\varpi_r\}$, where $\varpi_r$ is the period of $r_a$.  Thus, we have an upper bound
\[|L(\Theta)| \leq C \sum_{\mathbf{j} \geq \mathbf 0} \underbrace{\frac{1}{P_\mathbf{j}} \sum_{a=0}^{P_\mathbf{j}-1} \mathbf 1_{a \in S_{\mathbf j}}}_{=\delta_{\mathbf{j}}}.
\] 
Since $S_\mathbf{j}$ is periodic with period $P_\mathbf{j}$, $\delta_{\mathbf j}$ equals the density of the set $S_{\mathbf j}$, which is
$\delta_\mathbf{j} = \prod\limits_{p \in S} p^{-j} (1-1/p)$. Removing the finite factor $\prod_{p \in S} (1-1/p)$, we thus only need to establish the convergence of 
$$ \sum_{\mathbf{j} \geq \mathbf 0}  \prod_{p \in S} \frac{1}{p^{j_p}} = \prod_{p \in S} \sum_{j_p \geq 0} \frac{1}{p^{j_p}} = \prod_{p \in S}  \frac{1}{1-1/p},$$
which finishes the proof of the existence of the Ces\`aro limit.
\end{proof} 

\begin{remark} \label{nofluct}
If all $\sigma^k$ have at least one fixed point in $\mathcal{X}$ (as in the case, for example, for an endomorphism of a group, which has to fix the neutral element), then $A$ cannot have roots of unity as eigenvalues. This implies that none of the angles $\theta_j$ in \eqref{start0} can be rational multiples of $\pi$. If the set of flucuation angles $\{\theta_j\}_{j=1}^m \cup \{ \pi\}$ is rationally independent, then all the angles $\Theta_\epsilon$ that occur in the above proof have $L(\Theta_\epsilon)=0$, except for the value $\Theta_\epsilon=0$. In this case, $B=2^m \mathcal C(\tilde b_k)$. 
\end{remark}

\begin{remark} \label{Lambda1a} 
For a FAD-system with $\Lambda=1$, the fixed point count is periodic: $\sigma_k = r_k$, with period $\varpi$, see \cite[Prop.~10.3.6]{BCH}, and the dynamical zeta function is rational. Thus, the estimates \eqref{N1} and \eqref{M1} in Proposition \ref{Lambda1} hold. 
\end{remark}

\section{Examples of explicit asymptotic results} \label{exa}

In this section, we find the exact values of $B$ and $C$ in several cases of interest, more general than the examples presented in the introduction. We focus on the Ces\`aro limit $B$. For some cases, we also find an explicit formula for $C$, which requires knowing the entire asymptotics of $\zeta(u/\Lambda)$ near $u=1$ and using \eqref{c1} and \eqref{c2}. After that, we discuss the nature of the constants $B$ and $C$ (rationality, algebraicity, transcendence) and the relation of their nature to the geometry of the system, in case it arises from an algebraic group.  In Example (FF), it is clear that $B=C=1$, so we now deal with the other examples. If $S$ is a set of integers, we will use the suggestive notation $\mathbb{P}(n \colon n \in S)$ for the natural density of $S$ (if it exists).  

\subsection*{Multiplication on elliptic curves} In this subsection, we generalise the Example (E) to the following situation: suppose $E$ is an ordinary elliptic curve over $\overline \F_p$ with $p$ an odd prime, and suppose that $\sigma(P)=nP$ is multiplication by a positive integer $n$ on $E$. The number of fixed points of $\sigma^k$ is the number of elements of the torsion subgroup $E[n^k-1](\overline \F_p)$, and, decomposing into primary subgroups using that $E$ is ordinary, so $\# E[p^r](\overline \F_p) = p^r$, we find that $\sigma_k = (n^k-1)^2 \cdot |n^k-1|_p$. Thus, $\Lambda=n^2$ and $B=\mathcal C (|n^k-1|_p)$. 

If $p \mid n$, $|n^k-1|_p=1$ for all $k$, and $B=1$. We also find a rational zeta function $$\zeta(u/n^2) = \exp \sum_{k \geq 0} \frac{1}{k} \left( u^k - 2 (u/n)^k + (u/n^2)^k\right) = \frac{(1-u/n)^2}{(1-u)(1-u/n^2)} \underset{u \rightarrow 1^-}{\sim} (1-u)^{-1} \frac{n-1}{n+1}, $$ from which we infer that $C=(n/(n+1))^2$. 

In the more interesting case where $n$ is coprime to $p$, let $d$ denote the order of $n$ in $(\Z/p\Z)^*$ and $e \defeq v_p(n^d-1)$. The `lifting the exponent lemma' says that for all $k$ not divisible by $d$, $v_p(n^k-1)=0$, whereas if $d \mid k$, we have $v_p(n^k-1) = e + v_p(k/d)$. This implies 
$$ B =\mathbb{P}(d \nmid k) +  \lim_{X \rightarrow + \infty}  \frac{1}{X} \sum_{m \leq X/d} \frac{1}{p^{e+v_p(m)}} = 1 - \frac{1}{d} + \frac{1}{dp^e} \mathcal C(|m|_p).  $$
Now splitting summands by their exact $p$-valuations, we get $$\mathcal C (|m|_p) = \sum_{j \geq 0} \frac{1}{p^j} \mathbb P(m \colon p^j \mid m \wedge p^{j+1} \nmid m) = \sum_{j \geq 0} p^{-j} \left( \frac{1}{p^j} - \frac{1}{p^{j+1}} \right) = \frac{1-p^{-1}}{1-p^{-2}} = \frac{p}{p+1}. $$

To compute $C$, we use the exact asymptotics of $\zeta(u/n^2)$ near $u=1$. From the definition, we get $\log \zeta(u/n^2) = F(u)-2F(u/n)+F(u/n^2)$ where
\begin{align*} F(u) &\defeq \sum_{k \geq 1} |n^k-1|_p \frac{u^k}{k} =\sum_{\substack{k \geq 1 \\d \nmid k}} \frac{u^k}{k} + \frac{1}{dp^e} \sum_{m \geq 0} \frac{1}{p^{v_p(m)}} \frac{(u^d)^m}{m} \\ 
& =  \log \frac{\sqrt[d]{1-u^d}}{1-u} + \frac{1}{dp^e} \sum_{j \geq 0} \frac{1}{p^{2j}} \sum_{\substack{m \geq 1 \\ p \nmid m}} \frac{(u^{dp^j})^m}{m}  =  \log \Big(  \frac{\sqrt[d]{1-u^d}}{1-u} \Big(  \prod_{j \geq 0} \frac{(1-u^{dp^{j+1}})^{1/p^{2j+1}}}{ (1-u^{dp^{j}})^{1/p^{2j}}}  \Big)^{\frac{1}{dp^e}}    \Big)
\end{align*}
Combining factors in the product, this simplifies further to give 
\begin{equation} \label{expF} \exp F(u) = (1-u)^{-1} (1-u^d)^{\frac{1}{d}(1-\frac{1}{p^e})} \prod_{j \geq 1} (1-u^{dp^j})^{\frac{1}{p^{2j-1}} \cdot \frac{1}{dp^e}(1-\frac{1}{p})}. \end{equation}
To find the asymptotics near $u=1$, we divide out a factor $u-1$ for each factor in the product, notice that $(1-u^m)/(1-u) \sim m$ to get (with the sanity check that the exponent of $1-u$ is indeed $-B$), 
$$ \exp F(u) \sim (1-u)^{-B} \cdot d^{\frac{1}{d} ( 1-\frac{1}{p^{e-1}(p+1)} ) } \cdot p^{\frac{1}{dp^{e-2}(p-1)(p+1)^2}}. $$
On the other hand, $\exp F(1/n)$ and $\exp F(1/n^2)$ can be computed directly from \eqref{expF}, and if we collect all terms, we find the following. 

\begin{proposition} \label{BforE}
If $E$ is an ordinary elliptic curve over $\overline \F_p$ for an odd prime $p$, and $\sigma(P)=nP$ is multiplication-by-$n$ on $E$ for some integer $n$ coprime to $p$, then with $d$ the multiplicative order of $n$ modulo $p$ and $e=v_p(n^d-1)$, the constants in Theorem \ref{introthma} are 
\[B = 1-\frac{1}{d}\left( 1- \frac{p}{p^e(p+1)} \right) \in \mathbf Q  \]
and 
\[ C = d^{\frac{1}{d} ( 1-\frac{1}{p^{e-1}(p+1)} ) } \cdot p^{\frac{1}{dp^{e-2}(p-1)(p+1)^2}} \cdot \left(\frac{n^d+1}{n^d-1}\right)^{\frac{1}{d}(1-\frac{1}{p^{e-1}})} \cdot \left(\frac{n}{n+1}\right)^2 \cdot Q_p(n^d)^{\frac{1}{dp^e}(p-1)}    \]
where $$Q_p(x) \defeq \prod_{j \geq 0} \left(\frac{x^{p^j}+1}{x^{p^j}-1}\right)^{\frac{1}{p^{2j}}};$$
in particular, $B \neq 1$. On the other hand, if $n$ is divisible by $p$, $B=1$ and $C=n^2/(n+1)^2$.  \qed
\end{proposition}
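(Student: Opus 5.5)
The plan is to assemble the computations already carried out before the statement. The starting point is the fixed point formula $\sigma_k=(n^k-1)^2|n^k-1|_p$. It gives $\Lambda=n^2$, and expanding $(n^k-1)^2=n^{2k}-2n^k+1$ shows that
\[ \log\zeta(u/n^2)=F(u)-2F(u/n)+F(u/n^2), \qquad F(u)=\sum_{k\ge1}|n^k-1|_p\,u^k/k. \]
Since $\sigma_k/\Lambda^k=|n^k-1|_p+O(n^{-k})$, we get $B=\mathcal C(|n^k-1|_p)$.

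\textbf{The constant $B$.} Write $k=dm$. The lifting-the-exponent lemma gives $v_p(n^k-1)=0$ if $d\nmid k$, and $v_p(n^k-1)=e+v_p(m)$ if $k=dm$. Together with $\mathcal C(|m|_p)=p/(p+1)$, computed by splitting according to exact $p$-valuation, this yields
\[ B=1-\tfrac1d+\tfrac{1}{dp^e}\cdot\tfrac{p}{p+1}, \]
which is the stated rational value. Because $p/(p^e(p+1))<1$ and $d\ge1$, we get $B<1$, so $B\neq1$.

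\textbf{The constant $C$.} By \eqref{c1} and \eqref{c2}, $C=c'\Lambda/(\Lambda-1)$, where $c'=\lim_{u\to1^-}(1-u)^B\zeta(u/n^2)$. I would use the product formula \eqref{expF} for $\exp F(u)$, obtained by splitting the Lambert-type sum over $p$-adic valuations. Near $u=1$, each factor $(1-u^m)$ is replaced by $m(1-u)$. The exponent of $(1-u)$ must come out as $-B$, which is a consistency check. The remaining constant is
\[ d^{\frac1d(1-\frac{1}{p^{e}})}\prod_{j\ge1}(dp^j)^{\frac{p-1}{dp^{e+2j}}}. \]
The powers of $d$ and $p$ are geometric sums; summing them gives the first two factors of $C$, after checking the arithmetic $\sum_j j p^{-2j}=p^2/(p^2-1)^2$.

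Next, $\exp F(1/n)$ and $\exp F(1/n^2)$ are evaluated directly from \eqref{expF} at $u=1/n$ and $u=1/n^2$. The term $(1-u)^{-1}$ and the factors $(1-u^{dp^j})$ at these points combine with $(1-n^{-dp^j})$ versus $(1-n^{-2dp^j})$. Forming $\exp F(1/n^2)/\exp F(1/n)^2$, each pair becomes a ratio $(1-x^{-2})/(1-x^{-1})^2=(x+1)/(x-1)$ with $x=n^{dp^j}$. The $j=0$ term gives the factor $\left(\frac{n^d+1}{n^d-1}\right)^{\cdots}$. The $j\ge1$ terms produce $Q_p(n^d)$ raised to the appropriate power, after reindexing and absorbing the $j=0$ mismatch into the previous factor. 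The elementary prefactors $(1-1/n)^{-1}$ and $(1-1/n^2)^{-1}$, together with $\Lambda/(\Lambda-1)=n^2/(n^2-1)$, combine into $(n/(n+1))^2$.

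\textbf{The case $p\mid n$.} Here $|n^k-1|_p=1$ for all $k$, so $\zeta$ is the rational function computed earlier. This gives $c'=(n-1)/(n+1)$, hence $B=1$ and $C=c'\cdot n^2/(n^2-1)=n^2/(n+1)^2$.

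The main obstacle is the bookkeeping of exponents in the $C$ computation. In particular, the exponents of $Q_p(n^d)$ and of $(n^d+1)/(n^d-1)$ must match exactly after the $j=0$ shift. I would first verify them numerically against Example (E), where $p=3$, $n=2$, $d=2$, $e=1$ and $C/\Gamma(B)\approx0.502128$, before writing out the general algebra.
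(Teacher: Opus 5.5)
Your proposal is correct and is essentially the paper's own derivation. It uses lifting the exponent together with $\mathcal C(|m|_p)=p/(p+1)$ for $B$, and for $C$ the product formula \eqref{expF}, its behaviour as $u\to1^-$, and the evaluation of $\exp F$ at $1/n$ and $1/n^2$, combined via $C=c'\Lambda/(\Lambda-1)$. Your exponent bookkeeping checks out: the $j=0$ correction leaves $\left(\frac{n^d+1}{n^d-1}\right)^{\frac1d(1-p^{1-e})}$, and $\sum_{j\ge1} jp^{-2j}=p^2/(p^2-1)^2$ gives the stated power of $p$. The only step that neither you nor the paper spells out is replacing $1-u^{m}$ by $m(1-u)$ inside the infinite product; this is justified by dominated convergence, e.g.\ via $u^{m-1}\le (1-u^m)/(m(1-u))\le 1$.
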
 

Example (E) from the introduction is the case $p=3$, $n=2$, $d=2$, $e=1$. 

\begin{remark} The case where $E$ is supersingular is similar, with $\sigma_k = (n^k-1)^2 \cdot |n^k-1|^2_p$. In characteristic $p=2$, lifting the exponent works a bit differently. It may be interesting to generalise further to an arbitrary $\sigma \in \End(E)$ not given by an integer (recall that $\End(E)$ is an imaginary quadratic order in the ordinary case, and a maximal order in a quaternion algebra in the supersingular case). 
\end{remark}

\begin{remark} We conjecture that $C$ is transcendental for $n$ not divisible by $p$, which hinges on the nature of the function $Q_p(x)$. 
The function $Q_p(x)$ relates to the zeta function by Mellin transform $\mathcal M$, as follows: 
\begin{align*} \mathcal M [ \log Q_p(e^{-t}) ] (s)  & = 2 \Gamma(s) \sum_{j\geq 0} \frac{1}{p^{2j}} \sum_{\ell \geq 0} \frac{1}{2\ell+1} \cdot \frac{1}{(p^j(2\ell+1))^s} 
= \Gamma(s) \frac{1-2^{-s-1}}{1-p^{-s-2}} \zeta(s+1).
\end{align*}
The function $Q_p(x)$ is a Mahler function, in that it satisfies a functional equation of the form 
$$ Q_p(x^p) = Q_p(x)^{p^2} \cdot \left( \frac{x-1}{x+1}\right)^{p^2}.$$
By \cite[Thm.~1.3(i)]{Nishioka}, if the function $Q_p(x)$ were algebraic over $\mathbf C(x)$, then it would in fact be rational, i.e., $Q_p(x) \in \mathbf C(x)$, which is impossible for degree reasons (with $p>1$). The functional equation, applied to $Q(1/x)$, is just outside the scope of Nishioka's \cite{Nishioka} method to establish transcendence at rational values, so we leave it as an open problem.  
\end{remark}

\subsection*{Additive cellular automata and vector group endomorphisms} For a confined endomorphism $\sigma$ of $\Ga^r$, we have $\sigma_k = p^{kc-t_k|k|_p^{-1}}$ for non-negative integers $c$ and $t_k$, where $t_k$ is periodic of period $\varpi$ coprime to $p$, see \cite[Prop.~G]{BCca}. Here, $\Lambda=p^c$, and, writing $\varpi$ for the period of $t_k$, and using that this period is coprime to $p$, we find 
\begin{align} \label{BforCA}
B & =   \lim_{X \rightarrow \infty} \frac{1}{X} \sum_{a=0}^{\varpi-1} \sum_{j \geq 0} \sum_{\substack{k \leq X \\ v_p(k)=j \\ k \equiv a \text{ mod } \varpi}} \frac{1}{p^{t_a p^j}} %\nonumber \\& 
=  \lim_{X \rightarrow \infty}  \sum_{a=0}^{\varpi-1} \sum_{j \geq 0} \underbrace{\mathbb{P}(k \colon v_p(k)=j \wedge k \equiv a \text{ mod } \varpi)}_{\frac{1}{\varphi(\varpi)} p^{-j} \left( 1 - \frac{1}{p} \right)} \frac{1}{p^{t_a p^j}} \nonumber \\ 
& = \frac{p-1}{\varphi(\varpi)} \sum_{a=0}^{\varpi-1} \mathcal C (p^{-1-j-t_a p^j}). 
\end{align} 
This number is similar to the so-called `Kempner number' $\mathcal C(1/2^{2^j})$ (see, e.g., \cite{AdamKemp}) and we can prove that $B$ is transcendental if and only if there is a $t_k \neq 0$, using the method of Mahler (e.g., from Nishioka \cite{Nishioka}), as follows. Define the function $$F(x) \defeq  \sum_{a=0}^{\varpi-1} \sum_{j \geq 0} \frac{x^{t_a p^j}}{p^j},$$
and notice that $F$ satisfies a functional equation in Mahler form 
$$ \frac{1}{p} F(x^p) = F(x) - \sum_{a=0}^{\varpi-1} x^{t_a}. $$ 
We can conclude that $B=F(1/p)$ is transcendental, by checking the following conditions required by \cite[Thm.~1.5.1]{Nishioka}: 
\begin{enumerate} 
\item $1/q< \min \{1,R\}$, where $R$ is the convergence radius of $F(x)$ as power series, which is true since $R=1$; 
\item the degree $m$ of the Mahler equation in $F(x)$ is $1$, the exponent $d$ in $F(x^p)$ is $p$, and the exponent of $F(x^p)$ in the equation is $n=1$, and we require $\max\{d,m\} n^2 < d^2$, which holds; 
\item finding suitable functions $g_0(x,u)$ and $g_1(x,u)$ such that  $g(x) = g_0(x,u)/p + g_1(x,u) (-u+\sum x^{t_a})$ is independent of $u$, namely, $g_0=u$ and $g_1=1/p$; then $g(x) = \frac{1}{p} \sum_{a=0}^{\varpi-1} x^{t_a}$, we require $g(1/p^d) \neq 0$ for all $d$, which holds since there exists a least one $a$ with $t_a\neq 0$;
\item the function $F(x)$ is not algebraic over $\mathbf C(x)$, which holds, since, if it were algebraic, it would be rational by \cite[Thm.~1.3(i)]{Nishioka}; but then the functional equation implies that, if it is rational of degree $d$, $dp=d-\max\{t_a\}$, and this happens precisely if all $t_a$ are zero.  
\end{enumerate}
Finally, if all $t_a$ are zero, summing the geometric series in \eqref{BforCA} shows that $B=1$. 

As we have remarked before, such $\sigma$ defined over $\F_p$ correspond bijectively to the class of additive cellular automata $\tau$ with alphabet $\F_p^r$. The fixed point count for iterates of $\tau$ is identical to that of the corresponding $\F_p$-rational endomorphism of the vector group $\Ga^r$ over $\F_p$ \cite{BCH}. Hence the result also applies to this class of examples. 

We conclude the following: 
\begin{proposition} \label{gatransc}
If $\sigma$ is a confined endomorphism of a vector group $\Ga^r$ over $\overline \F_p$ (in particular, if $\sigma$ is an additive cellular automaton on the alphabet $\F_p^r$) then \[ B = \frac{p-1}{\varphi(\varpi)} \sum_{a=0}^{\varpi-1} \mathcal C (p^{-1-j-t_a p^j})\] is transcendental, or $B=1$, where the latter case happens precisely if $\zeta_\sigma(z) \in \mathbf C(z)$ is a rational function, or, equivalently, if all $t_a=0$. \qed
\end{proposition}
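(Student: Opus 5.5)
The statement to prove is Proposition \ref{gatransc}. Its content is essentially the computation carried out just before it, so the plan is to assemble that computation into a proof in three steps.

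\emph{Step 1: the formula for $B$.} By \cite[Prop.~G]{BCca} (via \cite{BCH} for cellular automata), we have $\sigma_k = p^{kc - t_k|k|_p^{-1}}$ with $t_k$ periodic of period $\varpi$ coprime to $p$. Hence $\Lambda = p^c$ and $\sigma_k/\Lambda^k = p^{-t_k p^{v_p(k)}}$. I split the indices $k \leq X$ by the residue $a$ of $k$ modulo $\varpi$ and by $j = v_p(k)$. On each such class the summand is constant, and since $\gcd(p,\varpi)=1$ the class has a natural density, which the Chinese remainder theorem gives as a product of a $p$-part and a $\varpi$-part. The Ces\`aro mean is then the sum over $(a,j)$ of density times value. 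Exchanging the limit in $X$ with the sum over $j$ is justified by dominated convergence, because the summands are at most $1$ and the densities decay like $p^{-j}$; this is the same tail argument as in the proof of Theorem \ref{FADisgood}. This yields \eqref{BforCA}, i.e.\ $B$ as a finite sum of values $\sum_j p^{-1-j-t_ap^j}$ up to the normalising factor.

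\emph{Step 2: the dichotomy.} If all $t_a=0$, then $\sigma_k=\Lambda^k$. The geometric series give $B=1$, and $\zeta_\sigma(z)=(1-\Lambda z)^{-1}$ is rational. Conversely, suppose $\zeta_\sigma$ is rational. Then $z\zeta'/\zeta=\sum\sigma_k z^k$ is rational, so $\sigma_k$ is a linear recurrence sequence. But along $k=p^j$ with $t_{p^j}\neq 0$ we get $\sigma_k/\Lambda^k = p^{-t\,p^j}$, which decays doubly exponentially, while at other $k$ the ratio $\sigma_k/\Lambda^k$ equals $1$. A linear recurrence with a dominant root of modulus $\Lambda$ cannot behave this way, by Skolem--Mahler--Lech type arguments or simply by comparing with the exponential-polynomial form. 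So rationality forces all $t_a=0$. The same argument settles the ``equivalently'' clause.

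\emph{Step 3: transcendence when some $t_a\neq0$.} I write $B$ as a rational multiple of $F(1/p)$ with $F(x)=\sum_a\sum_j x^{t_ap^j}/p^j$, which satisfies the Mahler equation $F(x^p)/p=F(x)-\sum_a x^{t_a}$. I then verify the hypotheses of \cite[Thm.~1.5.1]{Nishioka} one by one, following the enumerated list above. The radius of convergence is $1>1/p$. The degree conditions hold. The auxiliary function $g(x)=\frac1p\sum x^{t_a}$ does not vanish at the points $p^{-p^d}$, since it is a sum of positive terms. The remaining hypothesis is that $F$ is transcendental over $\mathbf C(x)$. By \cite[Thm.~1.3(i)]{Nishioka} it suffices to exclude $F$ being rational. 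For that I compare degrees (or pole orders at roots of unity) on the two sides of the functional equation: the left side has degree $p\deg F$ while the right side has degree at most $\max(\deg F,\max t_a)$, which is impossible unless $F$ is a polynomial, and a polynomial is incompatible with the infinitely many distinct exponents $t_ap^j$.

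I expect Step 3 to be the main obstacle, specifically checking Nishioka's hypotheses precisely: the exact normal form her theorem requires, and whether zero terms $t_a=0$ break anything. If needed, I would subtract the constant contribution of those $a$, which does not affect transcendence.
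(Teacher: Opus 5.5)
Your Steps~1 and~3 follow the paper's route: split $k\le X$ by the residue modulo $\varpi$ and by $v_p(k)$, then apply \cite[Thm.~1.5.1]{Nishioka} to the Mahler function $F$. Step~2, however, has a genuine gap.

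\textbf{Step 2.} Its premise that $\sigma_k/\Lambda^k=1$ for the ``other'' $k$ is false. In fact $\sigma_k/\Lambda^k=p^{-t_kp^{v_p(k)}}$ equals $1$ only where $t_k=0$, and in Example (GA) it is never $1$. A Skolem--Mahler--Lech argument does not help either. SML describes zeros (or level sets) of a recurrence, and here every level set of $\sigma_k/\Lambda^k$ is a finite union of arithmetic progressions, so nothing is contradicted. Ruling out exponentially small values along a subsequence is exactly what your sketch leaves unargued.

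So the implication ``$\zeta_\sigma$ rational $\Rightarrow$ all $t_a=0$'' is not established. The paper does not prove it in the text; it takes it from \cite[\S 8.2]{BCH}, cf.\ the proof of Theorem~\ref{dich}. Here is a short fix.
\begin{itemize}
\item If $\zeta_\sigma$ is rational, then $\zeta_\sigma(u/\Lambda)=(1-u)^n h(u)$ with $n\in\Z$ and $h(1)\neq0$. Hence $\log\zeta_\sigma(u/\Lambda)=n\log(1-u)+O(1)$ as $u\to1^-$, and Lemma~\ref{logas} gives $B=-n\in\Z$.
\item If some $t_a\neq0$, then $\sigma_k/\Lambda^k\le1$ for all $k$, and $\sigma_k/\Lambda^k\le p^{-t_a}$ on the class $k\equiv a \bmod \varpi$. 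Hence $0<B\le 1-\frac{1}{\varpi}(1-p^{-t_a})<1$, so $B\notin\Z$.
\end{itemize}

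\textbf{Step 3.} Comparing degrees at infinity in $\frac1pF(x^p)=F(x)-\sum_a x^{t_a}$ only excludes $\deg F\neq \frac1p\max_a t_a$. The case $0<\deg F=\frac1p\max_a t_a$ survives, so this does not show that $F$ is a polynomial. Count poles instead, as your parenthetical suggests. Write $F=P/Q$ in lowest terms. Then $P(x^p)/Q(x^p)$ is again in lowest terms, since a common zero $\gamma$ would make $\gamma^p$ a common zero of $P$ and $Q$. So the left side has a denominator of degree $p\deg Q$, whereas the right side has a denominator dividing $Q$. Hence $Q$ is constant, and your positivity argument (infinitely many exponents $t_ap^j$ with positive coefficients) finishes.

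\textbf{Step 1.} Your CRT density is $\frac1\varpi p^{-j}(1-\frac1p)$. So Step~1 yields
\[
B=\frac{p-1}{\varpi}\sum_a\sum_{j\ge0}p^{-1-j-t_ap^j}=\frac{p-1}{p\varpi}F(1/p),
\]
not \eqref{BforCA} verbatim. The factor $\varphi(\varpi)$ there and in the statement is a slip: with it, the case of all $t_a=0$ would give $\varpi/\varphi(\varpi)$ instead of $1$. Also, $\mathcal C(\cdot)$ there is being used for $\sum_{j\ge0}$. You should state the corrected formula rather than claim to recover \eqref{BforCA}. The rational factor is harmless for transcendence.
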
 

\begin{remark} 
Since $B$ has a very sparse $p$-ary expansion, one may also use Ridout's theorem \cite{Ridout} to prove transcendence. 
\end{remark} 

In the specific example (GA) from the introduction, we have $p=2$, $c=1$, $t_n=1$ for all $n$, and so $B=\mathcal C (p^{-1-j-p^j}) = \sum_{j \geq 0} 2^{-1-j-2^j}$, as indicated. 
To find $C$ in this case, we again revert to finding the precise asymptotics of $\zeta(u/2)$ around $u=1$. Substituting the value of $\sigma_k$ and writing $k=2^j m$ with $m$ odd, we find 
$$\log \zeta(u/2) = \sum_{k \geq 1} \frac{1}{2^{2^{v_2(k)}}} \frac{u^k}{k} = \sum_{j \geq 0} \frac{1}{2^{2^j+j}} \sum_{m \text{ odd } \geq 1} \frac{(u^{2^j})^m}{m}.$$
The inner sum is $1/2 \log((1+u^{2^j})/(1-u^{2^j}))$, and hence for $u \rightarrow 1^-$, we find 
$$ \zeta(u/2) = \prod_{j \geq 0} \Big( \frac{1+u^{2^j}}{1-u^{2^j}} \Big)^{1/2^{2^j+j+1}} \sim (1-u)^{-B} \prod_{j \geq 0} (2/2^j)^{1/2^{2^j+j+1}} \sim (1-u)^{-B} 2^{B-A}$$
with $A = \sum_{j \geq 0} j/2^{2^j+j+1}$. As in \eqref{c2}, with $\Lambda=2$, we thus find $C=2^{1+B-A}.$

\subsection*{A torus endomorphism without unique dominant root}  
As final example, we consider the torus endomorphism given in Example (GM) from the introduction. This is an example where in Formula \eqref{start0}, $m \neq 0$, so that there is a sinusoidal fluctuation in the fix point count sequence $\sigma_k$; this is the case where the linear recurrence for the factor $\det(A^n-1)$ in the expression for $\sigma_k$ does not have a unique so-called \emph{dominant root}, i.e., $A$ has several eigenvalues of maximal absolute value ($=\Lambda/c$).  

In the Example (GM) at hand, 
$$ \sigma_k = 4\Lambda^k \sin^2(k\theta/2) r_k |k|_5^{s_k} \mbox{ with } (r_k,s_k) = \left\{ \begin{array}{ll} (5^{-2},4) & \mbox{ if } 3 \mid n; \\ (1,0) & \mbox{ otherwise }, \end{array} \right. $$ 
where $\Lambda = \frac{1}{4}
   \left(3+\sqrt{5} +  \sqrt{6 \sqrt{5}-2}\right) \approx 2.15372$ and $8 \cos \theta = 3-\sqrt{5}$ (so $\theta \approx 1.37863\dots$), see \cite[Ex.~3.5.2(ii), Ex.~10.3.12]{BCH}.   
Now $\theta$ is not a rational multiple of $\pi$: if this were the case, $2 \cos \theta$ would be an algebraic integer, but it has minimal polynomial $4x^2-6x+1$. Hence by Remark \ref{nofluct}, $B= 2 \mathcal C(r_k |k|_5^{s_k}).$ This can be calculated by methods similar to the ones above: 
\begin{align*}
B &= \frac{2}{X} \Big( \sum_{\substack{k \leq X \\ 3 \nmid k}} 1 + \sum_{\substack{k \leq X \\ 3 \mid k}} \frac{1}{5^2} |k|_5^4 \Big)  
= \frac{4}{3} + 2 \sum_{j \geq 0} \underbrace{\mathbb{P}(3 \mid k \wedge v_5(k)=j)}_{1/3 \cdot (1/5^j-1/5^{j+1})}  \frac{1}{5^2} \frac{1}{5^{4j}} \\
& = \frac{4}{3} + \frac{8}{3\cdot 5^3} \frac{1}{1-1/5^5} = \frac{2 \cdot 23^2}{11 \cdot 71}. 
\end{align*}

We refrain from computing the constant $C$ in the asymptotic formula \eqref{N} for this case, but it can be computed with some effort from the explicit representation if $\sigma_k$ is given as above. 

\begin{remark} In this case, $B>1$, whereas for the other examples (E) and (GA), $B<1$ (and recall that for (FF) we have $B=1$). This shows that there can be a polynomial correction that makes $N(X)$ asymptotically larger or smaller than the purely exponential behaviour of type $C \Lambda^X$. 
\end{remark} 

\subsection*{The Ces\`aro mean and the geometry of the algebraic group} In this subsection, we prove Theorem \ref{dich} which relates the geometry of a general algebraic group $G$, more specifically its endoseries, to the possible rationality/algebraicity/transcendence of the corresponding Ces\`aro mean $B$. 

\begin{proof}[Proof of Theorem \ref{dich}] 
Statement (iii) was already shown in Proposition \ref{gatransc}. 
Thus, we consider an algebraic group $G$ over $\overline \F_p$ with an endomorphism $\sigma$ such that the zeta function of the induced action of $\sigma$ on every vector group quotient occurring in an endoseries for $G$ is rational. This is a FAD-system with $S=\{p\}$, so we leave out $S$ and $p$ from the notation, writing $s_{p,k}=s_k$ etc. Recalling from \cite[\S 8.2]{BCH} that in this case, all $t_k=0$ on such vector group quotients, and by multiplicativity of the fixed point count over short exact sequences of algebraic groups \cite[Prop.~4.8.1]{BCH} the same holds for the total count. 
We now take a look at the proof of Theorem \ref{FADisgood} for this specific case, and find from \eqref{BasL} that we only need to prove that the various occurring $L(\Theta)$ are algebraic. We only need to consider the case where $\Theta \in \pi \Q$, say, $\Theta=2 \pi q$ for $q \in \Q$ (since otherwise, $L(\Theta)=0$, as follows from the proof of Theorem \ref{FADisgood}). Assuming this, we look at formula \eqref{LTheta}, where now $\mathbf j$ is just a scalar $j$, and the period $P_\mathbf{j}$ can be taken as the common period $\varpi$ of $r_k$ and $s_k$. We find 
\begin{equation} \label{LThetaalg}  L(\Theta) = \frac{1}{\varpi}  \sum_{a=0}^{\varpi-1} r_a e^{2 \pi q a} \sum_{j \geq 0} p^{-j s_a} = \frac{1}{\varpi}  \sum_{a=0}^{\varpi-1} \frac{r_a}{1-p^{-s_a}}  e^{2 \pi q a} \in \Q^{\mathrm{ab}},
\end{equation} 
which proves the claim in (i). Finally, for (ii), notice that if the fluctuation angles are linearly independent over $\Q$, then by Remark \ref{nofluct}, we have $B=2^m L(0)$ with 
$L(0) \in \Q$, since this is the case where $q=0$ in Formula \eqref{LThetaalg}. 
\end{proof} 

\vspace*{-4mm}
\bibliographystyle{amsplain}
\providecommand{\href}[2]{#2}

\end{document}